\newtheorem{thm}{Theorem}[section]
\newtheorem{cor}[thm]{Corollary}
\newtheorem{lem}[thm]{Lemma}
\newtheorem{prop}[thm]{Proposition}
\theoremstyle{definition}
\newtheorem{defn}[thm]{Definition}
\newtheorem{ex}[thm]{Examples}
\newtheorem{example}[thm]{Example}
\theoremstyle{remark}
\newtheorem{rem}[thm]{Remark}
\numberwithin{equation}{section}
\newcommand{\Z}{\mathbb Z}
\newcommand{\C}{\mathbb C}
\newcommand{\R}{\mathbb R}
\newcommand{\N}{\mathbb N}
\newcommand{\Lef}{\mathbb{L} }
\newcommand{\GE}{\mathbb{G} }
\newcommand{\gr}{\mathrm{gr}}
\newcommand{\ZZ}{\mathcal{Z}}
\newcommand{\ssl}{\mathfrak{sl}}
\newcommand{\sfh}{\mathsf{h}}
\newcommand{\sfw}{\mathsf{w}}
\newcommand{\MI}{\mathcal{MI}}
\newcommand{\Q}{\mathbb Q}
\newcommand{\Li}{\mathrm{Li}}
\newcommand{\Be}{B}
\newcommand{\s}{\mathbf{s}}
\newcommand{\To}{\longrightarrow}
\newcommand{\G}{\mathbb{G}}
\newcommand{\tone}{\overset{\rightarrow}{1}\!}
\newcommand{\SL}{\mathrm{SL}}
\newcommand{\Or}{\mathcal{O}}
\newcommand{\mm}{\mathfrak{m} }
\newcommand{\HH}{\mathfrak{H} }
\newcommand{\id}{\mathrm{id} }
\newcommand{\LL}{\mathbb{L}}
\newcommand{\sv}{\mathrm{sv}}
\begin{document}
\author{Francis Brown}
\begin{title}[A class of non-holomorphic modular forms I]{A class of non-holomorphic modular forms I}\end{title}
\maketitle

This  paper  studies examples of     real analytic functions on the upper half plane
 satisfying a modular transformation property of the form  
\begin{equation} \label{firstintrobimod}  f \Big({ a z +b\over cz+d} \Big) =   (c z+d)^r (c\overline{z}+d)^s f(z)\ .
\end{equation}
for integers $r,s$.  They \emph{do not}  satisfy a simple condition involving the Laplacian. 
The \emph{raison d'\^etre} for this  class of functions is two-fold:

\begin{enumerate}
\vspace{0.05in}
\item Holomorphic modular forms  $f$ with rational Fourier coefficients correspond to certain pure motives $M_f$ over $\Q$. Using  iterated integrals, we can construct non-holomorphic modular forms which are associated to  iterated extensions of the pure motives $M_f$.  Their coefficients  are periods. 
\vspace{0.05in}
\item In genus one closed string perturbation theory, one assigns  a lattice sum to  a graph \cite{Graph2}, which  defines  a real-analytic function  on the upper half plane invariant under $\SL_2(\Z)$. It is an open problem to give
a complete description of this class of functions and prove their conjectured properties. 
\end{enumerate}

In this  introductory paper, we  describe elementary properties of a  
class $\mathcal{M}$ 
of  modular forms. Within this class are modular iterated integrals, which  are analogues of single-valued polylogarithms,  and are obtained by solving a differential equation in   $\mathcal{M}$.  The basic prototype are  real analytic Eisenstein series, defined by 
$$\mathcal{E}_{r,s}(z) = {w! \over ( 2 i \pi )^{w+1}} { 1\over 2 } \sum_{(m,n)\neq (0,0)}  { i \,\mathrm{Im}(z) \over (mz+n)^{r+1} (m \overline{z}+n)^{s+1}}$$
for all $r,s\geq 0$ such that  $w=r+s>0$ is even. It is known that the functions $\mathrm{Im}(z)^r \, \mathcal{E}_{r,r}(z)$ all occur as modular graph functions (2). 
Their relation with motives $(1)$ comes about by expressing the $\mathcal{E}_{r,s}$ as integrals. 
Indeed, they are equivariant or `modified single-valued versions' of  regularised Eichler integrals of  holomorphic Eisenstein series, and  their Fourier expansion involves the Riemann zeta values   $\zeta(w+1)$, which are periods of  simple extensions of Tate motives.  We shall say very little about motives in this paper,  and instead refer to  \cite{MMV, MEM} for geometric  motivation.
\\

This paper connects with the work of Don Zagier in several ways: through his work on modular graph functions \cite{Graph1}, on single-valued polylogarithms \cite{ZagierBWfunction}, on period polynomials  \cite{KZ},  on periods \cite{KohnenZagier}, on multiple zeta values \cite{GKZ}, on double Eisenstein series \cite{IKZ}, and doubtless many others.

 It is a great pleasure to dedicate this paper to him on  his 65th birthday.

\section{Modular graph functions}
For motivation, we briefly recall the   definition of modular graph functions.

\begin{defn} Let $G$ be a connected graph with no self-edges. It is permitted to have a number of half-edges.  Denote its set of  vertices by $V_G$ and number its edges  (including the half-edges) $1,\ldots, r$.  
Choose an orientation of $G$. The associated  modular graph function is defined, when it converges,  by the sum \cite{Graph1} (3.12):
$$I_{G}(z) =  \pi^{-r} \sum'_{m_1,n_1}    \ldots  \sum'_{m_r,n_r}   {\mathrm{Im}(z) \over |m_1 z+n_1|^2}\ldots  {\mathrm{Im}(z) \over |m_r z+n_r|^2} \prod_{v \in V_G}  \delta(m_v) \delta(n_v)\ $$
where $z$ is a variable in the upper half plane $\HH$, the prime over a summation symbol denotes a sum  over $(m,n) \in \Z^2 \backslash (0,0)$, and for every vertex $v \in V_G$
$$m_v = \sum^r_{i=1}    \varepsilon_{v,i} m_i \qquad \hbox{ and } \qquad n_v = \sum^r_{i=1}    \varepsilon_{v,i} n_i $$
where $\varepsilon_{v,i}$ is $0$ if the edge $i$ is not incident to the vertex $v$, $+1$ if $i$ is oriented towards the vertex $v$, and $-1$ if it is oriented away from $v$. 
\end{defn}

The function $I_G$ depends neither on the edge numbering, nor the choice of orientation of $G$.
It defines a function 
$I_G $ on the upper half plane
which is real-analytic and invariant under the action of $\SL_2(\Z)$. 

\begin{ex} Consider the  graph with 3 half-edges  depicted  on the left:
\begin{center}
\begin{figure}[h]
\fcolorbox{white}{white}{
  \begin{picture}(451,60) (90,-8)
    \SetWidth{1.0}
    \SetColor{Black}
    \Vertex(218.174,19.203){2}
    \SetColor{Black}
    \Line[arrow,arrowpos=0.5,arrowlength=2,arrowwidth=1,arrowinset=0.2](218.174,54.655)(218.174,19.203)
    \Line[arrow,arrowpos=0.5,arrowlength=2,arrowwidth=1,arrowinset=0.2](218.174,19.203)(194.539,-4.432)
    \Line[arrow,arrowpos=0.5,arrowlength=2,arrowwidth=1,arrowinset=0.2](218.174,19.203)(241.808,-4.432)
    \Vertex(312.713,19.203){2}
    \Vertex(312.713,36.929){2}
    \Line[arrow,arrowpos=0.5,arrowlength=2,arrowwidth=1,arrowinset=0.2](312.713,54.655)(312.713,36.929)
    \Line[arrow,arrowpos=0.5,arrowlength=2,arrowwidth=1,arrowinset=0.2](312.713,36.929)(312.713,19.203)
    \Line[arrow,arrowpos=0.5,arrowlength=2,arrowwidth=1,arrowinset=0.2](312.713,19.203)(289.078,-4.432)
    \Line[arrow,arrowpos=0.5,arrowlength=2,arrowwidth=1,arrowinset=0.2](312.713,19.203)(336.347,-4.432)
  \end{picture}
}
 \begin{caption}{Two graphs}\end{caption} \end{figure}
\end{center}
 The associated modular graph function is called
$$C_{1,1,1}(z) =  \pi^{-3} \sum'_{m_1,n_1,m_2,n_2}   { \mathrm{Im(z)}^3  \over |m_1 z+n_1|^2 |m_2z+n_2|^2 |(m_1+m_2)z+n_1+n_2|^2}$$
where the sum is over $(m_1,n_1) \in \Z^2$, $(m_2,n_2) \in \Z^2$ such that 
$$(m_1,n_1) \neq (0,0) \quad ,   \quad (m_2,n_2) \neq (0,0)  \quad , \quad (m_1+m_2,n_1+n_2) \neq (0,0) \ .$$
Zagier showed, in one of the  first calculations of a modular graph function,  that
$$C_{1,1,1}(z) =  {2\over 3}  \LL^2 \, \mathcal{E}_{2,2} +\zeta(3)$$
where $\LL = - 2 \pi \mathrm{Im}(z)$. 
See \cite{Graph1} Appendix B, for another derivation of this result. 
\end{ex}
 
\subsection{Properties}  \label{SectModGraphProperties} The literature on modular graph functions is too extensive to review  in detail here. 
 Instead, we give an incomplete list of the expected and conjectural properties of these functions and refer to \cite{Graph1}, \cite{Graph2}, \cite{Graph3},  \cite{Graph4}, \cite{Zerbini} for further details.
   \begin{enumerate}
   \vspace{0.05in}
  \item  Zerbini  \cite{Zerbini} has shown that  in all known examples,   the zeroth modes of modular graph functions involve 
  a certain class of multiple zeta values
  $$\zeta(n_1,\ldots, n_r) = \sum_{1\leq k_1 < \ldots < k_r} {1 \over n_1^{k_1} \ldots n_r^{k_r}}$$
  where $n_1,\ldots, n_r \in \N$ and $n_r \geq 2$, which are called  `single-valued' multiple zeta values.  The quantity $r$ is called the depth. 
  The `single-valued' subclass is generated in depth one by odd zeta values $\zeta(2n+1)$ for  $n\geq 1$, in depth two by 
  products $\zeta(2m+1)\zeta(2n+1)$, but starting from depth three  includes the following combination  of triple zeta values 
  $$\zeta_{\sv} (3,5, 3):  =   2 \zeta(3,5,3) - 2 \zeta(3) \zeta(3,5) -10   \zeta(3)^2 \zeta(5)\ . $$ 

\vspace{0.05in}
 \item  The $I_{G}$ satisfy some mysterious inhomogeneous  Laplace eigenvalue equations. A simple example of this is the equation   \cite{Graph4} (1.4)  
 \begin{equation} \label{introC211evalue} 
 (\Delta +2)\, C_{2,1,1}(z) =  16 \, \LL^2 \, \mathcal{E}_{1,1}^2     - \textstyle{2\over 5} \,\LL^3 \, \mathcal{E}_{3,3}
 \end{equation}
 where $\Delta$ is the Laplace-Beltrami operator. The function $C_{2,1,1}$ corresponds to the modular graph function of the graph with four edges and five vertices depicted above on the right.   Note that the operator $\Delta$ in the physics literature has the opposite sign from the usual convention $(\ref{Delta00Laplacedef})$. 
   
  \vspace{0.05in}
\item  Modular graph functions satisfy many relations \cite{Graph3}, which suggests that they should lie in  a finite-dimensional space of modular-invariant functions.

\vspace{0.05in} 
\item The zeroth modes of modular graph functions are homogeneous \cite{Graph1}, \S6.1, for a  grading called the weight, in which rational numbers have weight $0$, and multiple zeta values have weight $n_1+\ldots +n_r$. The weight of $\mathrm{Im}(z)$ is zero. 
 \end{enumerate}

\noindent
In the continuation  of this paper, we  construct a class of functions $\MI^E \subset \mathcal{M}$ satisfying  $(1)$-$(5)$ (see \S\ref{sectMIE}).  They
 are associated to  universal mixed elliptic motives \cite{MEM}, which are in turn  related  to mixed Tate motives over the integers.

\subsection{Landscape}
A heuristic explanation for the connection between string theory and our modular iterated integrals   can be summarised in the following picture:

\begin{table}[h]
\begin{center}
\begin{tabular}{|c|c|c|}
\hline
&\hbox{Open string} & \hbox{Closed string} \\ \hline
\hbox{Genus 0} & Multiple polylogs  &  Single-valued polylogs\\
\hbox{Genus 1} & Multiple elliptic polylogs  &   Equivariant iterated Eisenstein integrals \\ \hline
\end{tabular}
\end{center}
\label{default}
\end{table}%

\noindent

The open genus zero amplitudes are integrals on the moduli spaces of curves of genus $0$ with $n$ marked points $\mathfrak{M}_{0,n}$. They   involve multiple polylogarithms, whose values are multiple zeta values. 
The genus one string amplitudes are integrals on the moduli space $\mathfrak{M}_{1,n}$ and  are expressible \cite{Elliptic1loop} in terms of multiple elliptic polylogarithms \cite{MEP}. Viewed as a function of the modular parameter, the latter are given by  
certain iterated integrals of Eisenstein series.  The passage from the open to the closed string involves a  `single-valued' construction \cite{SteibergerSV}.  The closed superstring amplitudes in genus one are thus  linear combinations of iterated integrals of Eisenstein series and their complex conjugates which are modular.  This is the definition of the space $\MI^E$. A rigorous proof of the relation between closed superstring amplitudes and our class $\MI^E$ might go along the broad lines of the author's thesis,  generalised to genus one using \cite{MEP}.

\subsection{Acknowledgements} Many thanks to 
Michael Green, Eric d'Hoker, Pierre Vanhove, Don Zagier and  Federico Zerbini for explaining 
properties of modular graph functions to me. Many thanks also to Martin Raum for pointing out  connections with the literature on mock modular forms.  This work was partially supported by ERC grant 724638.
  I strived to make the exposition in this paper as accessible and elementary as possible.
As a result, there is considerable overlap with some classical constructions and well-known results in the theory of modular forms.  I apologise in advance if I have failed to provide attributions in every case.

\section{A class of functions  $\mathcal{M}$}
Throughout this paper, $z$ will denote a variable in the upper half plane
$$\HH = \{ z: \mathrm{Im} \, z >0\} $$
 equipped with the standard action of $\SL_2(\Z)$:
\begin{equation}\label{gammaact} \gamma (z) = {az + b \over cz +d} \quad \hbox{ where } \quad \gamma = \begin{pmatrix} a & b\\c & d 
  \end{pmatrix}  \in \SL_2(\Z)\ .
  \end{equation} 
We shall  write  $z= x+iy$, and $q= \exp(2 i \pi z)$.  Let 
 \begin{equation} \label{LLdef} \LL = \log |q| = {1 \over 2 } \log q \overline{q} =  i \pi (z- \overline{z}) = - 2 \pi y   \ .
\end{equation} 

\subsection{First definitions}

\begin{defn} Call a real analytic function 
 $f: \HH \rightarrow \C$   \emph{modular of weights} $(r,s)$ if for all $\gamma \in \SL_2(\Z)$ of the form $(\ref{gammaact})$ it satisfies 
\begin{equation} \label{fbimod}  f(\gamma(z)) =   (c z+d)^r (c\overline{z}+d)^s f(z)\ .
\end{equation}
If $r+s$ is odd then $f$ vanishes  (put $\gamma = - \id$ in $(\ref{fbimod})$). Let $\mathcal{M}_{r,s}$
denote the space of real analytic functions of modular weights $(r,s)$ which admit an expansion of the form
\begin{equation} \label{introfexpansion}  f (q) \quad \in \quad  \C [[q,\overline{q}]][\LL^{\pm} ]\end{equation}
\end{defn}

A more general class of functions was considered in \cite{Pasles}. 
A function in $\mathcal{M}_{r,s}$ can be written explicitly, for some $N\in \N$, in the form  
\begin{equation}\label{qexp}  f  = \sum_{k=-N}^{N} \sum_{m,n\geq 0} a^{(k)}_{m,n}   \LL^k  q^m \overline{q}^n  
\end{equation}
where $a^{(k)}_{m,n}\in \C$. 
For any ring $R\subset \C$, let $\mathcal{M}(R)$ be the bigraded subspace of modular forms  whose  coefficients $a^{(k)}_{m,n}$ lie in  $R$.   Define a bigraded vector space 
 $$\mathcal{M} = \bigoplus_{r,s} \mathcal{M}_{r,s}$$
which is  a bigraded algebra since 
$\mathcal{M}_{r,s} \mathcal{M}_{k, l} \subset \mathcal{M}_{r+k, s+l}$.
Complex conjugation induces an involution   
$$ f(z) \mapsto \overline{f(z)} \quad  : \quad    \mathcal{M}_{r,s}  \overset{\sim}{\To}   \mathcal{M}_{s,r} $$
 which fixes $\LL \in \mathcal{M}_{-1,-1}$. 
Of special importance are  the  quantities
 \begin{equation}
 w=r+s \qquad \hbox{ and} \qquad h = r-s \ .
 \end{equation} 
We call $w$  the total weight. 
 We only consider the cases where $w, h$ are even.

\subsection{$q$  - expansions and pole filtration}

\begin{lem} \label{lemlogqexpansion}
Suppose that  $f: \HH \rightarrow \C$ satisfies equation $(\ref{fbimod})$,  and admits an expansion in the ring 
$ \C [[q, \overline{q}]]  [  \log q  ,  \log \overline{q}  ]$. 
Then $f \in \C[[q, \overline{q}]] [  \LL ]. $
\end{lem}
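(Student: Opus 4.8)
The plan is to use only the simplest instance of the modular hypothesis. Taking $\gamma=\begin{pmatrix}1&1\\0&1\end{pmatrix}$ in $(\ref{fbimod})$ gives $c=0$, $d=1$, so $(cz+d)^{r}(c\overline{z}+d)^{s}=1$ and hence $f(z+1)=f(z)$: the function $f$ is invariant under the translation $T:z\mapsto z+1$. It therefore suffices to show that any $T$-invariant element of $\C[[q,\overline{q}]][\log q,\log\overline{q}]$ already lies in $\C[[q,\overline{q}]][\LL]$.

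The first step is to re-coordinatise the ambient ring. Since $\log q=2i\pi z$ and $\log\overline{q}=-2i\pi\overline{z}$ one has $\LL=\tfrac12(\log q+\log\overline{q})$; setting $u=\tfrac12(\log q-\log\overline{q})=2i\pi x$, the substitution $\log q=\LL+u$, $\log\overline{q}=\LL-u$ is invertible over $\C$, so
\[
\C[[q,\overline{q}]][\log q,\log\overline{q}]=\C[[q,\overline{q}]][\LL,u] .
\]
Accordingly write $f=\sum_{a,b\ge0}P_{a,b}(q,\overline{q})\,\LL^{a}u^{b}$ with $P_{a,b}\in\C[[q,\overline{q}]]$, the sum being finite in $a$ and in $b$. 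Under $T$ we have $q\mapsto q$, $\overline{q}\mapsto\overline{q}$, $\LL\mapsto\LL$ and $u\mapsto u+2i\pi$, so $T$-invariance of $f$ reads
\[
\sum_{a,b}P_{a,b}(q,\overline{q})\,\LL^{a}\bigl((u+2i\pi)^{b}-u^{b}\bigr)=0 .
\]
Expanding the bracket and reading off the coefficient of the monomial $\LL^{a}u^{j}$ gives, for each $a$ and $j$, the relation $\sum_{b>j}\binom{b}{j}(2i\pi)^{b-j}P_{a,b}=0$; for fixed $a$, taking $j=B-1,B-2,\dots$ with $B=\max\{b:P_{a,b}\neq0\}$ forces $P_{a,b}=0$ for every $b\ge1$. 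Hence $f=\sum_{a}P_{a,0}(q,\overline{q})\,\LL^{a}\in\C[[q,\overline{q}]][\LL]$, as required. (This also explains why only non-negative powers of $\LL$ arise, consistently with $(\ref{introfexpansion})$: the hypothesis involves only non-negative powers of $\log q,\log\overline{q}$.)

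The step that needs real justification — and which I expect to be the main obstacle — is the legitimacy of "reading off the coefficient of $\LL^{a}u^{j}$": this requires that the monomials $\{\LL^{a}u^{b}:a,b\ge0\}$ be linearly independent over $\C[[q,\overline{q}]]$ as genuine functions on a neighbourhood of the cusp, which is not a formal matter since $q,\overline{q},\LL,u$ are all built from the two real variables $x,y$. I would prove this by separating variables asymptotically. Substituting $\LL=-2\pi y$, $u=2i\pi x$ and $q^{m}\overline{q}^{\,n}=e^{2i\pi(m-n)x}e^{-2\pi(m+n)y}$, a vanishing relation $\sum P_{a,b}\LL^{a}u^{b}\equiv0$ becomes, for fixed $x$, an identity $\sum_{a,\sigma\ge0}c_{a,\sigma}(x)\,y^{a}e^{-2\pi\sigma y}\equiv0$ in $y$ valid for all large $y$; since the functions $y^{a}e^{-2\pi\sigma y}$ with distinct $(a,\sigma)$ are linearly independent over $\C$ (compare growth as $y\to\infty$, starting from the smallest $\sigma$ present), each $c_{a,\sigma}(x)$ vanishes identically; and each $c_{a,\sigma}(x)$ is a finite sum $\sum_{\nu}e^{2i\pi\nu x}p_{\nu}(x)$ with distinct integers $\nu$ and $p_{\nu}\in\C[x]$, so all the $p_{\nu}$ vanish and hence so do all the coefficients of $f$. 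The remainder is bookkeeping.
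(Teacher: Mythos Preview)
Your proof is correct and follows essentially the same route as the paper: apply $(\ref{fbimod})$ with $\gamma=T$ to reduce to $T$-invariance, then show that the $T$-invariants of $\C[[q,\overline{q}]][\log q,\log\overline{q}]$ lie in $\C[[q,\overline{q}]][\LL]$. Your change of variables to $(\LL,u)$ simply makes explicit what the paper calls ``a simple exercise in invariant theory,'' and your asymptotic argument for the linear independence of the monomials $\LL^a u^b$ over $\C[[q,\overline{q}]]$ as genuine functions on $\HH$ supplies a justification that the paper leaves implicit.
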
 
\begin{proof} 
 Setting $\gamma =T$ in equation $(\ref{fbimod})$ gives  $f(z+1)= f(z)$. Since $q$ and $\overline{q}$ are invariant   under translations $z\mapsto z+1$, it suffices to show that 
$$\C [\log q, \log \overline{q}]^T = \C [\log |q|]$$
where $T$ denotes analytic continuation of $q$ around a loop around $0$ in the punctured $q$-disc.  We have 
$T \log q = \log q + 2 i \pi$ and $T \log \overline{q} = \log \overline{q} - 2 i \pi$. It is a  simple exercise in invariant theory
 to show that every $T$-invariant polynomial in $\log q$ and $\log \overline{q}$ is a  polynomial in $2 \log|q| = \log q + \log \overline{q}$. 
\end{proof} 

Every element $f\in \mathcal{M}$ admits  a $q$-expansion of the form  $(\ref{qexp})$ for some $N$.  
 This expansion is unique.  Define the \emph{constant part}  of $f$ to be 
$$f^0   =  \sum_k  a^{(k)}_{0,0}   \LL^k   \qquad \in \qquad  \C [\LL^{\pm}] \ .$$
The reason for calling this `constant', although it is not constant as a function on $\HH$, is that it is constant with respect to differential operators
to be defined below. 
In the physics literature, the constant parts of modular graph functions are called their `zeroth Fourier modes'.  
The space $\mathcal{M}$ is filtered by the order of poles in $\LL$. Set  
\begin{equation} \label{Polefilt} P^p \mathcal{M} = \{ f \in \mathcal{M}:    a^{(k)}_{m,n}(f) = 0  \quad  \hbox{ if } \quad  k<p\} \ .
\end{equation} 
It is a decreasing filtration. It satisfies $P^a \mathcal{M} \times P^b \mathcal{M} \subset P^{a+b} \mathcal{M}$, and $P^0 \mathcal{M}$ is the subalgebra of
functions admitting expansions in $\C[[q, \overline{q}]][\LL] $ with no poles in $\LL$. 
Multiplication by $\LL$ is an isomorphism 
$\Lef :    P^a \mathcal{M}_{i,j} \overset{\sim}{\rightarrow} P^{a+1} \mathcal{M}_{i-1,j-1} \ .$

\begin{example}  Consider the Eisenstein series,    defined for all even $k\geq 4$ by 
\begin{equation} \label{introEisdefn} \GE_{k}(q) = - {b_{k} \over 2k} + \sum_{n \geq 1} \sigma_{k-1}(n) q^n \qquad \in \mathcal{M}_{k,0}(\Q) \ , 
\end{equation}
where $\sigma$ denotes the divisor function.  
The Eisenstein series of weight two
$$\GE_2(q) = {-1 \over 24} + \sum_{n=1}^{\infty} \sigma_1(n) q^n = -{1 \over 24} + q + 3q^2 +4 q^3+7 q^4 +6 q^5+ \ldots $$
is not modular invariant, but can be modified \cite{Zag123} \S2.3 by defining
\begin{equation} \label{E2stardef}
\GE^*_2 = \GE_2 - {1 \over 4 \LL}\ 
\end{equation} 
which is modular of weight $2$ and therefore  defines an element in $\mathcal{M}_{2,0}$.  
Then, for example, the function $\LL^2 \GE_2^* \overline{\GE_2^*} \in \mathcal{M}_{0,0}$ is modular invariant, where $\overline{\GE_2^*} = \overline{\GE_2} - {1 \over 4\LL}$. 
\end{example} 

Recall that the polynomial ring
\begin{equation} \widetilde{M}:=   M[\GE_2^*]  
\end{equation}
where $M$ is the ring of holomorphic modular forms, 
is called the ring of almost holomorphic modular forms. By the previous example,  it is contained in $\mathcal{M}$.

\subsection{Differential  operators  (Maass)}

\begin{defn} \label{partialdefn} For any integers $r,s\in \Z, $ define a pair of operators
\begin{equation} \partial_r = (z - \overline{z}) {\partial \over \partial z} + r  \quad  , \quad \overline{\partial}_s = (\overline{z} - z) {\partial \over \partial \overline{z}} + s \ .
\end{equation}
They act on real analytic functions $f: \HH\rightarrow \C$.
\end{defn} 
These operators satisfy a version of the Leibniz rule:
\begin{equation} \label{partialLeibniz} \partial_{r+s} (fg) = \partial_r(f) g + f \partial_s(g) 
\end{equation} 
for any $r,s$ and  $f, g: \HH \rightarrow \C$,  and in addition the formula
\begin{equation}  \label{partialLk} \partial_r \big( (z-\overline{z})^k  f  \big)= (z-\overline{z})^k \partial_{r+k} f
\end{equation} 
for any integers $r,k$.   Both formulae  $(\ref{partialLeibniz})$ and $(\ref{partialLk})$ remain true on replacing  $\partial$ by $\overline{\partial}$ and are verified by straightforward computation.  Finally, one checks that 
\begin{equation} \label{partialcommutators}
\partial_{r-1} \big( \overline{ \partial}_s  f \big) -  \overline{\partial}_{s-1}  \big( \partial_r f \big) = (r-s) f \ .
\end{equation} 
The following lemma implies that these operators respect modular transformations.
\begin{lem} \label{lemdeltaauto} For all $\gamma \in M_{2\times 2}(\R)$  of the form $(\ref{gammaact})$, and $z \in \HH$,  we have 
\begin{eqnarray} \partial_r   \Big(  (c z+ d)^{-r} f ( \gamma z ) \Big)  &=  & (cz+d)^{-r-1} (c\overline{z}+d) \, (\partial_{r} f ) ( \gamma z )  \nonumber \\ 
\overline{\partial}_s   \Big(  (c \overline{z}+ d)^{-s} f  (\gamma z) \Big)  &=  & (cz+d)  (c\overline{z}+d)^{-s-1} (\partial_{s} f) ( \gamma z) \nonumber  \ .
\end{eqnarray}
\end{lem}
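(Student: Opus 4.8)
The plan is to verify both identities by direct computation, treating them as an exercise in the chain rule and the change of variables $w = \gamma z = (az+b)/(cz+d)$. First I would record the two elementary facts needed: for $\gamma \in M_{2\times 2}(\R)$ with $\det \gamma = ad-bc$, one has $\frac{\partial w}{\partial z} = \frac{\det\gamma}{(cz+d)^2}$ (and $\frac{\partial w}{\partial \overline z} = 0$, since $w$ is holomorphic in $z$), together with the identity $w - \overline{w} = \frac{(\det\gamma)(z - \overline z)}{(cz+d)(c\overline z + d)}$. In the intended application $\gamma \in \SL_2(\Z)$ so $\det\gamma = 1$, but the computation goes through for any $\gamma$ of the stated form, so I would keep $\det\gamma$ general or simply set it to $1$ for brevity.

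Next I would treat the first identity. Write $g(z) = (cz+d)^{-r} f(\gamma z)$ and apply $\partial_r = (z-\overline z)\frac{\partial}{\partial z} + r$. The $\frac{\partial}{\partial z}$ hits two factors: differentiating $(cz+d)^{-r}$ produces $-rc(cz+d)^{-r-1} f(\gamma z)$, and differentiating $f(\gamma z)$ by the chain rule produces $(cz+d)^{-r} (\partial_z f)(\gamma z)\cdot \frac{\det\gamma}{(cz+d)^2}$. Multiplying through by $(z-\overline z)$ and adding the $rg$ term, the two terms proportional to $r$ combine: $r(cz+d)^{-r}f(\gamma z)\bigl(1 - \frac{c(z-\overline z)}{cz+d}\bigr) = r (cz+d)^{-r-1}(c\overline z + d) f(\gamma z)$. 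For the remaining term I substitute $z - \overline z = (cz+d)(c\overline z+d)(w-\overline w)/\det\gamma$ using the identity above, which cancels the spurious factors and yields $(cz+d)^{-r-1}(c\overline z + d)\,(w-\overline w)(\partial_z f)(w)$. Summing the two contributions gives exactly $(cz+d)^{-r-1}(c\overline z+d)\bigl[(w - \overline w)(\partial_z f)(w) + r f(w)\bigr] = (cz+d)^{-r-1}(c\overline z+d)\,(\partial_r f)(\gamma z)$, as claimed.

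The second identity is proved the same way with the roles of $z$ and $\overline z$ interchanged: with $h(z) = (c\overline z + d)^{-s} f(\gamma z)$ and the operator $\overline\partial_s = (\overline z - z)\frac{\partial}{\partial \overline z} + s$, the only subtlety is that $\frac{\partial}{\partial \overline z}$ annihilates $f(\gamma z)$ through the $w$ variable (since $w$ depends only on $z$) but acts on $(c\overline z + d)^{-s}$ and on the explicit $\overline z$ appearing in $f$ via $\overline w = \overline{\gamma z}$; one uses $\frac{\partial \overline w}{\partial \overline z} = \frac{\det\gamma}{(c\overline z+d)^2}$. The same cancellation of the $s$-terms and the same substitution $\overline z - z = (cz+d)(c\overline z + d)(\overline w - w)/\det\gamma$ finish the computation. (One should note that as written in the statement the right-hand side reads $(\partial_s f)(\gamma z)$, which must be understood as $\overline\partial_s$ acting in the $\overline w$ variable, i.e. $(\overline w - w)(\partial_{\overline w} f)(w) + s f(w)$; this is the natural reading consistent with Definition~\ref{partialdefn} and with symmetry under complex conjugation.)

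I do not expect a genuine obstacle here: the whole statement is a bookkeeping computation, and the only place to be careful is keeping track of which variable each partial derivative acts through after the substitution $w = \gamma z$ — in particular remembering that $w$ is holomorphic in $z$ so that $\partial/\partial \overline z$ kills it, and dually for $\overline w$. Having $\det\gamma$ general rather than $1$ costs nothing and makes the two key identities $\partial_z w = \det\gamma/(cz+d)^2$ and $w - \overline w = \det\gamma (z-\overline z)/((cz+d)(c\overline z+d))$ transparent; the factors of $\det\gamma$ cancel in pairs. The lemma then immediately yields that $\partial_r$ and $\overline\partial_s$ send $\mathcal{M}_{r,s}$ into $\mathcal{M}_{r+1,s}$ and $\mathcal{M}_{r,s+1}$ respectively, since taking $\gamma \in \SL_2(\Z)$ and using $(\ref{fbimod})$ turns the displayed equations into the modularity relations of weights $(r+1,s)$ and $(r,s+1)$.
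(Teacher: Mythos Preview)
Your proposal is correct and is precisely the direct computation the paper invokes (the paper's proof reads, in full, ``Direct computation''). One small slip in your closing remark: the lemma shows $\partial_r$ sends $\mathcal{M}_{r,s}$ to $\mathcal{M}_{r+1,s-1}$, not $\mathcal{M}_{r+1,s}$ --- the extra factor $(c\overline z+d)$ on the right-hand side of the first identity lowers the antiholomorphic weight by one (and dually $\overline\partial_s : \mathcal{M}_{r,s}\to\mathcal{M}_{r-1,s+1}$).
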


\begin{proof} Direct computation. 
\end{proof} 
See \S\ref{sectEquivariant} for another interpretation of $\partial_r, \overline{\partial}_s$  in terms of sections of vector bundles.
\begin{lem} \label{lempartialsqexp}
The operators $\partial_r$, $\overline{\partial}_s$ preserve the expansions $(\ref{qexp})$, the filtration $(\ref{Polefilt})$,  and are defined over $\Q$. Their action is given explictly for any $k,m,n$ by 
\begin{eqnarray}  \label{partialactformula} \partial_r (\LL^k\,  q^m \overline{q}^n )  &=  & (2m\,  \LL + r + k )  \, \LL^k  q^m \overline{q}^n  \\ 
\overline{\partial}_s (\LL^k\,  q^m \overline{q}^n )  &=  & (2n \, \LL + s + k ) \, \LL^k  q^m \overline{q}^n \ .\nonumber 
\end{eqnarray}
\end{lem}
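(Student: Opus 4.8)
The plan is to verify the explicit formulas $(\ref{partialactformula})$ by direct computation, and then to read off all three assertions (preservation of $q$-expansions, of the pole filtration, and definedness over $\Q$) as immediate corollaries. First I would recall from $(\ref{LLdef})$ that $\LL = i\pi(z-\overline{z})$, so that $(z-\overline{z}) = \LL/(i\pi)$, and compute the action of $\partial/\partial z$ and $\partial/\partial\overline{z}$ on a monomial $\LL^k q^m \overline{q}^n$. Since $q = \exp(2i\pi z)$ we have $\frac{\partial}{\partial z} q = 2i\pi q$ and $\frac{\partial}{\partial z}\overline{q} = 0$, while $\frac{\partial}{\partial z}\LL = i\pi$; hence
\[
\frac{\partial}{\partial z}\big(\LL^k q^m \overline{q}^n\big) = \big(i\pi\, k\, \LL^{k-1} + 2i\pi\, m\, \LL^k\big) q^m \overline{q}^n .
\]
Multiplying by $(z-\overline{z}) = \LL/(i\pi)$ gives $(k\,\LL^{k} + 2m\,\LL^{k+1})\,q^m\overline{q}^n/\LL \cdot \LL$ — more cleanly, $(z-\overline{z})\frac{\partial}{\partial z}(\LL^k q^m\overline{q}^n) = (k + 2m\,\LL)\,\LL^{k}q^m\overline{q}^n$, and adding the $+r$ term yields exactly the first line of $(\ref{partialactformula})$. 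The computation for $\overline{\partial}_s$ is identical with the roles of $q$ and $\overline{q}$, and of $z$ and $\overline{z}$, interchanged, using $\frac{\partial}{\partial\overline{z}}\LL = -i\pi$, $\frac{\partial}{\partial\overline{z}}\overline{q} = 2i\pi\overline{q}$, and $(\overline{z}-z) = -\LL/(i\pi)$.

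With these formulas in hand the three claims follow at once. Preservation of the expansion $(\ref{qexp})$: applying $\partial_r$ to $f = \sum_{k=-N}^{N}\sum_{m,n\geq 0} a^{(k)}_{m,n}\LL^k q^m\overline{q}^n$ term by term and collecting, we see that $\partial_r f$ is again of the form $(\ref{qexp})$ — the new coefficient of $\LL^k q^m\overline{q}^n$ is $(r+k)a^{(k)}_{m,n} + 2m\, a^{(k-1)}_{m,n}$, and the top degree in $\LL$ does not increase (it could decrease), so the same $N$ works. Definedness over $\Q$: these new coefficients are $\Q$-linear combinations (with coefficients $r+k$, $2m$, $s+k$, $2n \in \Z$) of the old ones, so if $f\in\mathcal{M}(R)$ for a ring $R\subset\C$ then $\partial_r f,\overline{\partial}_s f\in\mathcal{M}(R)$; in particular the operators are defined over $\Q$. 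Preservation of the pole filtration $(\ref{Polefilt})$: if $a^{(k)}_{m,n}(f) = 0$ for all $k<p$, then from the formula the coefficient of $\LL^k q^m\overline{q}^n$ in $\partial_r f$, namely $(r+k)a^{(k)}_{m,n}(f) + 2m\,a^{(k-1)}_{m,n}(f)$, vanishes whenever $k<p$, since then both $k<p$ and $k-1<p$; thus $\partial_r(P^p\mathcal{M})\subset P^p\mathcal{M}$, and likewise for $\overline{\partial}_s$.

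There is essentially no obstacle here: the entire content is the monomial computation $(\ref{partialactformula})$, which is a one-line application of the chain rule together with the identity $(z-\overline{z}) = \LL/(i\pi)$. The only point requiring a word of care is that $\partial_r$ and $\overline{\partial}_s$ as defined in Definition~\ref{partialdefn} act on an honest real-analytic function $f\colon\HH\to\C$, whereas the lemma asserts something about its formal $q$-expansion; but since $q,\overline{q},\LL$ are genuine real-analytic functions on (a neighbourhood of the cusp in) $\HH$ and the expansion $(\ref{qexp})$ is a convergent one representing $f$, differentiation commutes with the (locally uniformly convergent) sum, so the formal computation legitimately describes the action on $f$. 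Finally, one should note that the output lies in $\C[[q,\overline{q}]][\LL^{\pm}]$ as required — this is visible from the formula, and one does not even need Lemma~\ref{lemlogqexpansion} since no logarithms other than powers of $\LL$ are ever produced.
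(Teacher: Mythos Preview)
Your approach is essentially identical to the paper's: verify the monomial formula by direct differentiation, then read off the three assertions; the paper's proof says exactly this in one line, deducing the $\overline{\partial}_s$ formula by complex conjugation (your symmetry argument).

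Two small slips to correct. First, $\overline{q}=e^{-2i\pi\overline{z}}$, so $\frac{\partial}{\partial\overline{z}}\overline{q}=-2i\pi\,\overline{q}$, not $+2i\pi\,\overline{q}$; since you appeal to symmetry rather than carrying out the computation this does not damage the argument, but the stated derivative is wrong. Second, your claim that ``the top degree in $\LL$ does not increase'' is false: the term $2m\,\LL^{k+1}q^m\overline{q}^n$ raises the top degree by one whenever $m\geq 1$, so the same $N$ does \emph{not} work. This does not affect the conclusion, since preservation of the expansion $(\ref{qexp})$ only requires that $\partial_r f$ lie in $\C[[q,\overline{q}]][\LL^{\pm}]$ for \emph{some} finite $N$, and $N+1$ suffices; but the sentence as written is incorrect.
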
 
\begin{proof} The first part follows immediately from the formulae $(\ref{partialactformula})$, which are easily derived from the definitions.  The second line follows by complex conjugation. 
\end{proof} 

\begin{cor}
The operators $\partial_p, \partial_q$ preserve modularity: 
$$\partial_p : \mathcal{M}_{p,q} \To \mathcal{M}_{p+1, q-1} \qquad \hbox{ and } \qquad  \overline{\partial}_q : \mathcal{M}_{p,q} \To \mathcal{M}_{p-1, q+1} $$
\end{cor}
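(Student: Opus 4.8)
The strategy is to deduce the statement from the two lemmas immediately preceding it, namely the equivariance Lemma~\ref{lemdeltaauto} and Lemma~\ref{lempartialsqexp}. Fix $f \in \mathcal{M}_{p,q}$ and $\gamma \in \SL_2(\Z)$ of the form $(\ref{gammaact})$, so that $f(\gamma z) = (cz+d)^p (c\overline z + d)^q f(z)$ for all $z \in \HH$. Applying $\partial_p$ directly to this relation is awkward because of the holomorphic prefactor $(cz+d)^p$, so I would first rewrite it as
\[
(cz+d)^{-p}\, f(\gamma z) \;=\; (c\overline z+d)^q\, f(z).
\]
Apply $\partial_p$ to both sides. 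On the left, Lemma~\ref{lemdeltaauto} gives $(cz+d)^{-p-1}(c\overline z+d)\,(\partial_p f)(\gamma z)$. On the right, the factor $(c\overline z+d)^q$ depends on $\overline z$ only, hence is annihilated by $\partial/\partial z$; since $\partial_p = (z-\overline z)\,\partial/\partial z + p$, the Leibniz rule (an instance of $(\ref{partialLeibniz})$ with the second weight equal to $0$) gives $\partial_p\bigl((c\overline z+d)^q f\bigr) = (c\overline z+d)^q\,\partial_p f$. Equating the two expressions and cancelling the common factor $(cz+d)^{-p-1}(c\overline z+d)$ yields
\[
(\partial_p f)(\gamma z) \;=\; (cz+d)^{p+1}(c\overline z+d)^{q-1}\,(\partial_p f)(z),
\]
which is precisely $(\ref{fbimod})$ for the weights $(p+1, q-1)$.

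It then remains to observe that $\partial_p f$ is real analytic (immediate, since $\partial/\partial z$ and multiplication by $z-\overline z = 2iy$ preserve real-analytic functions) and admits an expansion in $\C[[q,\overline q]][\LL^{\pm}]$; the latter is exactly the content of Lemma~\ref{lempartialsqexp}. Hence $\partial_p f \in \mathcal{M}_{p+1, q-1}$. For $\overline{\partial}_q$ I would run the mirror argument: rewrite the modularity relation as $(c\overline z+d)^{-q} f(\gamma z) = (cz+d)^p f(z)$, apply $\overline{\partial}_q$, use the second formula of Lemma~\ref{lemdeltaauto} on the left and the fact that $\partial/\partial\overline z$ kills the holomorphic factor $(cz+d)^p$ on the right, to obtain $(\overline{\partial}_q f)(\gamma z) = (cz+d)^{p-1}(c\overline z+d)^{q+1}(\overline{\partial}_q f)(z)$, and then conclude as before using Lemma~\ref{lempartialsqexp}. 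Alternatively, $\overline{\partial}_q$ can be handled by complex conjugation: the involution $f \mapsto \overline f$ sends $\mathcal{M}_{p,q}$ isomorphically to $\mathcal{M}_{q,p}$, one has $\overline{\partial_q(\overline f)} = \overline{\partial}_q f$ directly from Definition~\ref{partialdefn}, and the $\partial$-case applied to $\overline f$ then gives the claim.

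There is essentially no obstacle here: every step is either one of the two preceding lemmas or a one-line verification. The only point that benefits from care is organising the transformation law so that the relevant operator acts only on the prefactor it handles cleanly --- the anti-holomorphic $(c\overline z+d)^q$ for $\partial_p$, and the holomorphic $(cz+d)^p$ for $\overline{\partial}_q$ --- before invoking Lemma~\ref{lemdeltaauto}; otherwise the cross-terms coming from $\partial/\partial z$ (resp.\ $\partial/\partial\overline z$) hitting the "wrong" factor clutter the computation unnecessarily.
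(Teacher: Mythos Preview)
Your proof is correct and follows exactly the approach the paper intends: the paper's own proof is the single sentence ``This follows immediately from lemmas~\ref{lemdeltaauto} and~\ref{lempartialsqexp},'' and you have simply unpacked how those two lemmas combine --- Lemma~\ref{lemdeltaauto} for the modular transformation law and Lemma~\ref{lempartialsqexp} for the preservation of the expansion in $\C[[q,\overline q]][\LL^{\pm}]$. Your rearrangement of the modularity identity to isolate the anti-holomorphic prefactor before applying $\partial_p$ is a clean way to make the invocation of Lemma~\ref{lemdeltaauto} transparent.
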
 
\begin{proof} This  follows immediately from lemmas \ref{lemdeltaauto} and  \ref{lempartialsqexp}.
\end{proof} 
\begin{defn} Let us define linear operators 
$$\partial , \overline{\partial}:  \mathcal{M} \To \mathcal{M}$$
of bi-degrees $(1,-1)$ and $(-1,1)$ respectively, where $\partial$ acts on the component $\mathcal{M}_{r,s}$ via $\partial_r$ for all $s$, 
and similarly, $\overline{\partial}$ acts on $\mathcal{M}_{r,s}$ via $\overline{\partial}_s$ for any $r$. 
\end{defn} 
The operator  $\partial$ is a  derivation, i.e, 
$\partial (fg)  = \partial(f) g + f \partial (g) $
for all $ f, g \in \mathcal{M}$, and similarly for $\overline{\partial}$.  This follows, component by component, from the formula $(\ref{partialLeibniz})$. Likewise, it  commutes  with multiplication by $\LL^k$: 
$$\partial (\LL^k\,  f) = \LL^k \,  \partial (f)$$
for all $k$ and all $f\in \mathcal{M}$, and similarly for $\overline{\partial}$.  This is equivalent to  $(\ref{partialLk})$.
We can rewrite the previous equation in the form 
$$[\partial, \LL] =[\overline{\partial}, \LL] = 0\ ,$$
or think of $\LL$ as being  constant:  $\partial (\LL) = \overline{\partial} (\LL)=0$. 
\subsection{Action of $\ssl_2$} 
The equation $(\ref{partialcommutators})$ implies that 
$$  [\partial,  \overline{\partial} ] = \sfh $$
where we define the linear map 
\begin{equation} \label{hmapdefn} \sfh : \mathcal{M} \To \mathcal{M}
\end{equation}  to be multiplication by $r-s$ on the component $\mathcal{M}_{r-s}$. 
\begin{prop} \label{propsl2} The operators $\partial, \overline{\partial}$ generate a copy of the Lie algebra $\ssl_2$:
\begin{equation}
[\sfh ,\partial  ] =   2 \partial \qquad \ , \qquad  [ \sfh , \overline{\partial}  ] = -   2 \overline{\partial} \qquad \ , \qquad [\partial, \overline{\partial}]=\sfh\  
\end{equation} 
acting upon $\mathcal{M}$. Every element commutes with multiplication by  $\LL^k$. 
\end{prop}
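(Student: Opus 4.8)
The plan is to verify the three bracket relations directly, using the explicit descriptions of $\partial$, $\overline\partial$, and $\sfh$ together with the computational facts already assembled. The relation $[\partial,\overline\partial]=\sfh$ is exactly the restatement of $(\ref{partialcommutators})$ noted just before the proposition, so nothing further is needed there; I would simply cite that equation. It remains to establish $[\sfh,\partial]=2\partial$ and $[\sfh,\overline\partial]=-2\overline\partial$.

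First I would observe that both $\sfh$ and $\partial$ are homogeneous for the bigrading on $\mathcal{M}=\bigoplus_{r,s}\mathcal{M}_{r,s}$: by the Corollary, $\partial$ carries $\mathcal{M}_{r,s}$ into $\mathcal{M}_{r+1,s-1}$, while $\sfh$ acts on $\mathcal{M}_{r,s}$ as the scalar $r-s$. Now fix $f\in\mathcal{M}_{r,s}$. On one hand, $\sfh\partial(f)$ lands in $\mathcal{M}_{r+1,s-1}$ on which $\sfh$ is the scalar $(r+1)-(s-1)=r-s+2$, so $\sfh\partial(f)=(r-s+2)\,\partial(f)$. On the other hand, $\partial\sfh(f)=\partial\big((r-s)f\big)=(r-s)\,\partial(f)$, since $\partial$ is linear. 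Subtracting gives $[\sfh,\partial](f)=2\,\partial(f)$, and as $f$ was an arbitrary element of an arbitrary graded piece, $[\sfh,\partial]=2\partial$ on all of $\mathcal{M}$. The computation for $\overline\partial$ is identical: $\overline\partial$ shifts $(r,s)$ to $(r-1,s+1)$, on which $\sfh$ is the scalar $r-s-2$, whence $[\sfh,\overline\partial]=-2\overline\partial$. The final sentence of the proposition, that every element commutes with multiplication by $\LL^k$, has already been recorded above: $[\partial,\LL]=[\overline\partial,\LL]=0$ is equivalent to $(\ref{partialLk})$, and $[\sfh,\LL]=0$ because $\LL\in\mathcal{M}_{-1,-1}$ so multiplication by $\LL$ preserves $r-s$; hence all three generators, and therefore the whole Lie algebra they span, commute with $\Lef$.

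There is essentially no obstacle here: the content is entirely formal once the three ingredients — the $\ssl_2$-style commutator $(\ref{partialcommutators})$, the bidegree bookkeeping from the Corollary, and the constancy of $\LL$ from $(\ref{partialLk})$ — are in hand. The only point requiring a word of care is the sign and normalization convention for $\sfh$: one must use precisely the definition that $\sfh$ acts as $r-s$ (not $s-r$, and not with an extra factor) in order for $[\partial,\overline\partial]=\sfh$ and $[\sfh,\partial]=2\partial$ to hold simultaneously with the stated constants, which is why it is worth spelling out the scalar on each graded piece rather than manipulating operators abstractly.

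\begin{proof}
The identity $[\partial,\overline\partial]=\sfh$ is a restatement of $(\ref{partialcommutators})$: applied componentwise on $\mathcal{M}_{r,s}$ it reads $\partial_{r-1}(\overline\partial_s f)-\overline\partial_{s-1}(\partial_r f)=(r-s)f$, which is exactly the action of $\sfh$.

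For the remaining two relations, let $f\in\mathcal{M}_{r,s}$. By the Corollary, $\partial(f)=\partial_r(f)\in\mathcal{M}_{r+1,s-1}$, on which $\sfh$ acts as multiplication by $(r+1)-(s-1)=r-s+2$. Hence
\[
[\sfh,\partial](f)=\sfh\,\partial(f)-\partial\,\sfh(f)=(r-s+2)\partial(f)-(r-s)\partial(f)=2\,\partial(f).
\]
Similarly $\overline\partial(f)=\overline\partial_s(f)\in\mathcal{M}_{r-1,s+1}$, on which $\sfh$ acts as $(r-1)-(s+1)=r-s-2$, so
\[
[\sfh,\overline\partial](f)=(r-s-2)\overline\partial(f)-(r-s)\overline\partial(f)=-2\,\overline\partial(f).
\]
Since $\mathcal{M}=\bigoplus_{r,s}\mathcal{M}_{r,s}$ and both sides are linear, $[\sfh,\partial]=2\partial$ and $[\sfh,\overline\partial]=-2\overline\partial$ on all of $\mathcal{M}$. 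Thus $\partial,\overline\partial,\sfh$ span a copy of $\ssl_2$.

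Finally, $[\partial,\LL]=[\overline\partial,\LL]=0$ was noted above (it is equivalent to $(\ref{partialLk})$), and multiplication by $\LL\in\mathcal{M}_{-1,-1}$ preserves the difference $r-s$, so it commutes with $\sfh$ as well. Therefore every element of this $\ssl_2$ commutes with multiplication by $\LL^k$ for all $k$.
\end{proof}
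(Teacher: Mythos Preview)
Your proof is correct and follows exactly the approach the paper intends: the paper's own proof consists of the single line ``Straightforward computation,'' and you have simply written out that computation in full, using $(\ref{partialcommutators})$ for $[\partial,\overline\partial]=\sfh$, the bidegree shifts from the Corollary for the other two brackets, and $(\ref{partialLk})$ for commutation with $\LL$.
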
  
\begin{proof} Straightforward computation.
\end{proof} 

\subsection{Almost holomorphic modular forms}  \label{remalmosthol} The  subspace $\widetilde{M}[\LL^{\pm}]$ of almost holomorphic modular forms inherits an $\ssl_2$ module structure which is not to be confused with another $\ssl_2$ module structure  \cite{Zag123} \S5.3, which involves multiplication by $\GE_2$. 
For the convenience of the reader, we describe the differential module structure here.

 Let us define a new generator
$$ \mm :=  4  \LL \GE_2^*   =  4 \LL \GE_2 - 1   \qquad \in \quad \mathcal{M}_{1,-1} \ .$$
Then the ring 
$M [ \LL, \mm]$
is an $\ssl_2$-module with the following structure:  $\overline{\partial}(\LL) =0$, and 
\begin{equation} \label{partialbaronalmosthol}  \overline{\partial}\, \mm = 1 \qquad ,   \qquad \overline{\partial}{f} =0 \qquad \hbox{ for all } f \in M \ . 
\end{equation} 
Therefore $\overline{\partial}\big|_{M[\LL,\mm]} = \textstyle{ \partial \over \partial \mm}$ is simply differentiation with respect to $\mm$.  On the other hand, by looking at their first few Fourier coefficients, we easily verify that:
\begin{eqnarray}
\partial{\mm}  &=  &  -  \mm^2  + \textstyle{20 \over 3} \LL^2 \GE_4 \nonumber \\ 
\partial{\GE_4}  &= &-4 \mm \GE_4 + \textstyle{7\over 5} \LL \GE_6 \nonumber \\ 
\partial{\GE_6} & = & -6 \mm \GE_6 + \textstyle{800\over 7} \LL \GE_4^2 \nonumber 
\end{eqnarray} 

Since the ring of holomorphic modular forms  $M$ is generated by $\GE_4$ and $\GE_6$, 
we conclude that  $M[ \LL, \mm]$ is indeed closed under the action of $\partial$. These formulae are equivalent to a computation due to Ramanujan.
In general, for any $f\in M_{n}$ we have 
\begin{equation} \label{Serrederivative}  \partial f =   - n  f\,  \mm + 2 \, \theta(f) \LL \end{equation} 
where $\theta(f) \in M_{n+2}$ is  the `Serre derivative' of $f$ \cite{Zag123} $(53)$.  The previous formula is compatible with the commutation relation $h = [\partial, \overline{\partial}]$, as the reader may wish to check.

For example, the Hecke normalised cusp form  $\Delta$ of weight $12$ satisfies 
$\theta(\Delta) = 0$. It follows that 
$ \partial (\Delta)  = -12 \mm \Delta$,
which gives another interpretation of $\mm$.

\subsection{Bigraded Laplace operator}
By taking polynomials in  $\LL, \partial$ and $\overline{\partial}$ one can define any number of operators acting on the space $\mathcal{M}$. Examples include the Laplace operator,  Rankin-Cohen brackets  \S\ref{sectRC},
and the Bol operator (see \cite{Mock}). 

\begin{defn}For all integers $r,s$, 
define  a Laplace operator 
\begin{eqnarray}  \label{LaplaceDef}
 \Delta_{r,s} &=  &  - \overline{\partial}_{s-1} \partial_r   + r(s-1)   \\ 
  & = &   -  \partial_{r-1} \overline{\partial}_{s} +s(r-1)\ .\nonumber  
  \end{eqnarray} 
  The second definition is equivalent to the first by the commutation relation $(\ref{partialcommutators})$.
  These operators  are compatible with  complex conjugation: 
$\overline{\Delta_{r,s} f} = \Delta_{s,r} \overline{f}$.
\end{defn} 
From the definition and the formula $z=x+iy$,  one verifies that 
\begin{eqnarray} \Delta_{r,s}  &=  & - 4  y^2 {\partial \over \partial z}  {\partial \over \partial {\overline{z}}} + 2 i r  y \, { \partial \over \partial \overline{z}} - 2 i s  y \,{\partial \over \partial z}  \nonumber \\
& = &  \Delta_{0,0} + i (r-s) y {\partial \over \partial x} - (r+s) y {\partial \over \partial y} \ ,\nonumber 
\end{eqnarray} 
where  $\Delta_{0,0}$ is the usual hyperbolic Laplacian  
\begin{equation} \label{Delta00Laplacedef} \Delta_{0,0} = - y^2 \Big( {\partial^2 \over \partial x^2 } +{\partial^2\over \partial  y^2} \Big)\ ,
\end{equation}
 and  $\Delta_{r,-r}$ is the weighted hyperbolic Laplacian in the theory of Maass waveforms \cite{Maass}. 
 It follows from the previous computation $(\ref{partialactformula})$ that $\Delta_{r,s}$ acts via:
\begin{multline}  \label{Deltars} \Delta_{r,s} (   \LL^k q^m \overline{q}^n) =  \\ 
\Big( - 4mn \,  \LL^2 +2 ( kn+km+rn+sm)\,\LL -k(k+r+s-1) \Big)  \LL^k q^m \overline{q}^n\ .  \end{multline}
which has integral coefficients.  The modular transformation  properties of lemma \ref{lemdeltaauto} imply that the Laplace operator preserves the transformation law $(\ref{fbimod})$.

\begin{cor} The operator $\Delta_{r,s}$ defines a linear map
$$\Delta_{r,s}: \mathcal{M}_{r,s} \To \mathcal{M}_{r,s}\ .$$
In particular, the hyperbolic Laplacian $\Delta_{0,0}$ acts on the modular-invariant space $\mathcal{M}_{0,0}$. 
\end{cor}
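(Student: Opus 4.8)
The statement is a short bookkeeping consequence of the machinery already set up, so the plan is correspondingly brief. First I would recall from the corollary to Lemmas \ref{lemdeltaauto} and \ref{lempartialsqexp} that $\partial$ restricts to a map $\mathcal{M}_{r,s}\to\mathcal{M}_{r+1,s-1}$ and $\overline{\partial}$ restricts to a map $\mathcal{M}_{r+1,s-1}\to\mathcal{M}_{r,s}$; by the very definition of $\overline{\partial}$, the latter is given on the component $\mathcal{M}_{r+1,s-1}$ by the operator $\overline{\partial}_{s-1}$. Composing, $\overline{\partial}_{s-1}\partial_r$ carries $\mathcal{M}_{r,s}$ into itself.

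Next I would invoke the first line of $(\ref{LaplaceDef})$, namely $\Delta_{r,s}=-\,\overline{\partial}_{s-1}\partial_r+r(s-1)$, read as an operator on $\mathcal{M}_{r,s}$. The scalar operator $r(s-1)\cdot\id$ obviously preserves $\mathcal{M}_{r,s}$, and by Lemma \ref{lempartialsqexp} both $\partial_r$ and $\overline{\partial}_{s-1}$ preserve the shape $(\ref{qexp})$ of the $q$-expansion (and are defined over $\Q$); hence $\Delta_{r,s}f\in\mathcal{M}_{r,s}$ for every $f\in\mathcal{M}_{r,s}$, which is the claim. As a cross-check one can instead argue directly from the explicit formula $(\ref{Deltars})$ together with Lemma \ref{lemdeltaauto}: the former shows that $\Delta_{r,s}f$ again admits an expansion in $\C[[q,\overline{q}]][\LL^{\pm}]$, and the modular transformation identities of Lemma \ref{lemdeltaauto} show that $\Delta_{r,s}$ preserves the functional equation $(\ref{fbimod})$.

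For the final assertion I would simply specialise to $r=s=0$, where $r(s-1)=0$, so $\Delta_{0,0}=-\,\overline{\partial}_{-1}\partial_0$ maps $\mathcal{M}_{0,0}$ to $\mathcal{M}_{0,0}$; that is, the hyperbolic Laplacian $(\ref{Delta00Laplacedef})$ acts on the modular-invariant subspace of $\mathcal{M}$.

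There is no genuine obstacle here: the only step deserving a moment's attention is matching indices — verifying that the $\overline{\partial}$ occurring after $\partial_r$ acts with weight parameter $s-1$ (equivalently, that the composite lands back in bidegree $(r,s)$ rather than some neighbouring component), which is exactly the content of the subscript convention $\Delta_{r,s}=-\overline{\partial}_{s-1}\partial_r+r(s-1)$ recorded in $(\ref{LaplaceDef})$, and whose consistency with the other expression there is guaranteed by the commutator $(\ref{partialcommutators})$.
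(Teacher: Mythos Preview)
Your proposal is correct and matches the paper's own justification, which is the single sentence preceding the corollary: the explicit formula $(\ref{Deltars})$ shows the expansion shape is preserved, and Lemma~\ref{lemdeltaauto} shows the modular transformation law is preserved. Your primary route via the already-established corollary $\partial:\mathcal{M}_{r,s}\to\mathcal{M}_{r+1,s-1}$, $\overline{\partial}:\mathcal{M}_{r+1,s-1}\to\mathcal{M}_{r,s}$ is an equally valid (and perhaps slightly cleaner) packaging of the same ingredients, and your cross-check is exactly the paper's argument.
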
 

 Let 
$\Delta : \mathcal{M} \rightarrow \mathcal{M}$ denote
 the linear operator which acts by $\Delta_{r,s}$ on  $\mathcal{M}_{r,s}$. 
 Let  $\sfw: \mathcal{M} \rightarrow \mathcal{M}$ 
be the linear map which acts by multiplication by  $w=r+s$ on $\mathcal{M}_{r,s}$.

  \begin{lem} \label{lemoperatoridentities} The Laplace  operator  satisfies  the equations
   \begin{equation} \label{DeltaLL} \, (\Delta +\sfw ) \LL f = \LL \, \Delta f \end{equation}
i.e., $[\Lef, \Delta ] = \sfw  \Lef$, and also
$  [ \partial,  \Delta  ] = [   \overline{\partial}, \Delta] = 0$.
\end{lem}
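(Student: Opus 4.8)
The plan is to verify each of the three identities in Lemma \ref{lemoperatoridentities} by reducing everything to the explicit action on monomials $\LL^k q^m \overline{q}^n$ established in Lemma \ref{lempartialsqexp} and formula $(\ref{Deltars})$. Since $\partial$, $\overline{\partial}$, $\Delta$, $\Lef$, $\sfw$ are all linear and preserve the $q$-expansion $(\ref{qexp})$, it suffices to check the operator identities on a single monomial $\LL^k q^m \overline{q}^n$ lying in a fixed weight-$(r,s)$ component (so that $w = r+s$). This turns each Lie-algebra-style identity into a polynomial identity in the integer parameters $k,m,n,r,s$, which is a routine — though slightly tedious — algebra check.

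First I would treat $(\ref{DeltaLL})$, i.e. $[\Lef,\Delta] = \sfw\,\Lef$. Apply $\Delta$ to $\LL^{k+1} q^m \overline{q}^n$, which is an element of $\mathcal{M}_{r-1,s-1}$, using $(\ref{Deltars})$ with $(r,s)$ replaced by $(r-1,s-1)$ and $k$ replaced by $k+1$; compare with $\LL$ times $\Delta(\LL^k q^m \overline{q}^n)$ computed in weight $(r,s)$. The difference of the two scalar coefficients should come out to exactly $w = r+s$ times $\LL^{k+1}q^m\overline{q}^n$; the $\LL^2$-terms ($-4mn$) match immediately, the linear-in-$\LL$ terms contribute the $+(m+n)+(\text{shift in }r,s)$ discrepancy, and the constant terms $-k(k+r+s-1)$ versus $-(k+1)(k+r+s-1)$ contribute the rest, so that the total is $r+s$ as claimed. (Alternatively, one can derive $[\Lef,\Delta]=\sfw\Lef$ abstractly from $\Delta_{r,s} = -\overline\partial_{s-1}\partial_r + r(s-1)$, the relation $\partial(\LL f) = \LL\,\partial f$ hence $\partial_r(\LL f) = \LL\,\partial_{r-1} f$ when $f \in \mathcal{M}_{r-1,\cdot}$ — careful with the index bookkeeping — together with $\Lef: P^a\mathcal{M}_{i,j}\to P^{a+1}\mathcal{M}_{i-1,j-1}$; this is cleaner but I would still fall back on the monomial computation for safety.)

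For $[\partial,\Delta] = 0$, I would use the second form $\Delta = -\partial_{r-1}\overline\partial_s + s(r-1)$ on the source and the first form $\Delta = -\overline\partial_{s-1}\partial_r + r(s-1)$ on the target: since $\partial$ maps $\mathcal{M}_{r,s}\to\mathcal{M}_{r+1,s-1}$, we must show $\partial_r\big(-\overline\partial_{s-1}\partial_r\! +\! r(s-1)\big) = \big(-\overline\partial_{s-2}\partial_{r+1} + (r+1)(s-2)\big)\partial_r$ as operators on $\mathcal{M}_{r,s}$. Expanding $\partial_r\overline\partial_{s-1}\partial_r$ and $\overline\partial_{s-2}\partial_{r+1}\partial_r$ and using the commutation relation $(\ref{partialcommutators})$ to swap a $\partial$ past an $\overline\partial$ (which produces the scalar correction $r-s$ at the appropriate index) reduces the claim to matching the resulting scalars; again this can be cross-checked instantly on the monomial $\LL^k q^m\overline q^n$ by applying $\partial$ to $(\ref{Deltars})$ and $(\ref{Deltars})$ (with shifted indices) to $(\ref{partialactformula})$ and seeing both give $(2m\LL+r+k)\big(-4mn\LL^2 + 2(kn+km+rn+sm)\LL - k(k+r+s-1)\big)\LL^k q^m\overline q^n$. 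The identity $[\overline\partial,\Delta]=0$ then follows either by an identical computation or, more quickly, by applying complex conjugation to $[\partial,\Delta]=0$ using $\overline{\Delta_{r,s}f} = \Delta_{s,r}\overline f$ and $\overline{\partial f} = \overline\partial\,\overline f$.

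The main obstacle is purely bookkeeping: keeping the weight indices on $\partial$, $\overline\partial$, $\Delta$ correct as one composes operators that change the bidegree, since each operator's definition refers to the bidegree of the component it acts on. The cleanest safeguard is to fix a homogeneous monomial in a named weight component $\mathcal{M}_{r,s}$ at the outset and carry $(r,s)$ through every step, checking that the $\ssl_2$-relations of Proposition \ref{propsl2} are respected; no genuinely hard idea is needed beyond the already-established explicit formulae $(\ref{partialactformula})$ and $(\ref{Deltars})$.
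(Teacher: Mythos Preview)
Your approach is correct in principle and would yield a valid proof, but it is more laborious than the paper's, and one of your claimed intermediate formulae is not right.

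For $[\partial,\Delta]=0$ the paper exploits the two equivalent forms of $\Delta$ in $(\ref{LaplaceDef})$ more efficiently than you do. It applies the \emph{first} form $\Delta_{r,s}=-\overline{\partial}_{s-1}\partial_r+r(s-1)$ on the source and the \emph{second} form $\Delta_{r+1,s-1}=-\partial_r\overline{\partial}_{s-1}+(s-1)r$ on the target. Both triple composites are then literally $\partial_r\overline{\partial}_{s-1}\partial_r$, and the scalar terms $r(s-1)$ agree on the nose, so the identity is immediate with no appeal to the commutation relation $(\ref{partialcommutators})$. (Your text says you will do this but then writes the first form on both sides, which forces you into the commutator computation; that also works, but is a detour.) The identity $[\Lef,\Delta]=\sfw\Lef$ is handled similarly in the paper by using only that $\Lef$ commutes with $\partial$ and $\overline{\partial}$ as operators on $\mathcal{M}$, so the scalar $r(s-1)$ shifts to $(r-1)(s-2)$ under $\Lef$, producing exactly $(r+s-2)=\sfw$ on $\Lef f$.

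One genuine slip: your monomial cross-check formula is wrong as written. Neither $\partial\Delta(\LL^kq^m\overline{q}^n)$ nor $\Delta\partial(\LL^kq^m\overline{q}^n)$ equals the simple product $(2m\LL+r+k)\big(-4mn\LL^2+\cdots\big)\LL^kq^m\overline{q}^n$. The polynomial in $\LL$ produced by $\Delta$ has terms in degrees $k,k+1,k+2$, and $\partial_r$ acts on each with a \emph{different} factor $(2m\LL+r+k')$ depending on the power $k'$; likewise in the other order the bidegree shifts to $(r+1,s-1)$ before $\Delta$ is applied. The two sides do agree, but not for the reason you state, and verifying it this way requires collecting several cross-terms. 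This does not break your strategy, but it means the monomial route is not the clean one-line check you suggest; the paper's operator argument is both shorter and safer.
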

\begin{proof}   By  (\ref{LaplaceDef}),  for any $f$ we have
\begin{multline} \Lef  (\Delta_{r,s} f )= \Lef ( - \overline{\partial} \partial  f + r(s-1) f)  =  ( - \overline{\partial} \partial   + r(s-1)) \Lef f   = \nonumber \\
  ( - \overline{\partial} \partial   + (r-1)(s-2)) \Lef f + (r+s-2) \Lef f =  \Delta_{r-1,s-1} \Lef f + (r+s-2) \Lef f\ ,
 \end{multline}
 which implies (\ref{DeltaLL}). Similarly, 
$$\partial  (\nabla_{r,s} f) = \partial ( - \overline{\partial} \partial f + r(s-1) f) = (- \partial  \overline{\partial} + (r+1-1)(s-1) ) \partial f = \nabla_{r+1,s-1} (\partial f)$$
which implies that $[\partial ,\nabla]=0$. By complex conjugating, $[\overline{\partial} ,\nabla]=0$. 
 \end{proof}

\subsection{Real analytic Petersson inner product}
Let 
$$\mathcal{D} = \{ |z| >1 , |\mathrm{Re} (z) | <  \textstyle{1\over 2} \} \qquad \hbox{ and } \qquad d\hbox{vol} = \displaystyle{dx dy \over y^2}  $$
be the interior of the standard fundamental domain for the action of $\SL_2(\Z)$ on $\HH$, and  the $\SL_2(\Z)$-invariant volume form on $\HH$ in its standard normalisation. 
For any $r,s$ let 
$$\mathcal{S}_{r,s} \subset \mathcal{M}_{r,s}$$
denote the subspace of functions $f$ whose constant part $f^0$ vanishes.  If $\mathcal{S}= \bigoplus_{r,s} \mathcal{S}_{r,s}$, there is an exact sequence
$$0 \To \mathcal{S}  \To \mathcal{M} \To \C[\LL^{\pm}] \To 0 $$
where the third map is the `constant part' $ f\mapsto f^0$. 
\begin{defn}  For any integer $n$
consider the  pairing 
\begin{eqnarray}  \label{IPpairing}
\mathcal{M}_{r,s} \times \mathcal{S}_{n-s,n-r}  & \To & \C \\
f \quad \times \quad  g  \qquad & \mapsto & \langle f, g\rangle :=  \int_{\mathcal{D}} f (z)   \overline{g(z)} \,  y^n  \, d\hbox{vol}  \nonumber 
\end{eqnarray} 
The function $f (z)\overline{g(z)} y^n$ is modular of weights $(0,0)$ and lies in $\mathcal{S}_{0,0}$.  The pairing $(\ref{IPpairing})$ coindices with the usual Petersson inner product when restricted to holomorphic modular forms. To verify that the integral is finite, it suffices to bound the integrand near  the cusp. Any element of $\mathcal{M}$ grows at most polynomially in $y$ as  $y\rightarrow \infty$, but via  $ q = \exp(2  \pi  i x) \exp(-2 \pi y)$ and $\overline{q} = \exp(-2 \pi i x) \exp(-2 \pi y)$ it tends to zero in absolute value exponentially fast  in $y$ at the cusp  since $g(z)\in \mathcal{S}$.  
\end{defn}

Two spaces $\mathcal{M}_{r,s}$ and $\mathcal{S}_{r',s'}$ can be paired via $(\ref{IPpairing})$ if and only if $r-s= r'-s'$. Equivalently,  $\langle f,g\rangle$ exists whenever  $h(f) = h(g)$, where $h$ was defined in  $(\ref{hmapdefn})$.

The pairing $(\ref{IPpairing})$ satisfies 
$$\langle \overline{f}, \overline{g} \rangle =  \overline{ \langle f, g \rangle } \ , $$
and, for any $e, f \in \mathcal{M}$ and $g\in \mathcal{S}$ such that $h(e)+h(f) = h(g)$,  we have 
$ \langle  f e , g \rangle = \langle  f , \overline{e} g\rangle$.  Via $(\ref{LLdef})$, we also have   for all $m \in \Z:$ 
\begin{equation} \label{scaleIP}
\langle f , \LL^m g \rangle =  \langle  \LL^m f , g \rangle = (- 2 \pi)^m \,  \langle f, g \rangle  \ .
\end{equation} 

We now consider special cases of this pairing. 
 When $n = r+s$,  we have
$$\langle  \ , \  \rangle: \mathcal{M}_{r,s} \times \mathcal{S}_{r,s} \To \C\ $$
 which restricts to a positive definite quadratic form on $\mathcal{S}_{r,s}$, since
 $$ \langle f, f \rangle  =  \int_{\mathcal{D}}   \big| f (z)    \big|^2   \,  y^n  \, d\hbox{vol} \quad   > \quad 0 \qquad \hbox{ for } f \in \mathcal{S}_{r,s} \ .  $$

 \subsection{Holomorphic projections \cite{Sturm}.} In the particular case $n= r$, we have  
 \begin{eqnarray} \mathcal{M}_{r,s} \times S_{r-s}  &\To &  \C \ .  \\
                  f \quad \times \quad  g  \quad & \mapsto &  \langle f, g \rangle  \nonumber
                     \end{eqnarray}
 where $S_{r-s} \subset \mathcal{S}_{r-s,0}$ is the space of holomorphic cusp forms of weight $r-s$. It is non-trivial only if $h(f) = r-s\geq12$.
 Similarly,  by setting $n=s$ we obtain
 \begin{eqnarray} \mathcal{M}_{r,s} \times \overline{S}_{s-r}  &\To &  \C \ .  \\
                  f \quad \times \quad  g  \quad & \mapsto &  \langle f,  \overline{g} \rangle  \nonumber
                     \end{eqnarray}
  which is non-trivial whenever $h(f)= r-s \leq - 12$.

  Equivalently, these two maps can be combined into  a single linear map
 $$ \mathcal{M}_{r,s} \To \mathrm{Hom}(S_{r-s},\C) \oplus   \mathrm{Hom}(\overline{S}_{s-r},\C)  \ , $$
 at least one component of which is zero. 
 Since the classical Petersson inner product restricts to a  non-degenerate quadratic form on  $S_{r-s}$,  we can identify  $\mathrm{Hom}(S_{r-s},\C)$  with $S_{r-s}$, and similarly for its complex conjugate. Via this identification, the previous map defines a  projection
 \begin{equation} \label{holprojection} p = (p^{h} , p^{a}) : \mathcal{M}_{r,s} \To  S_{r-s} \oplus \overline{S}_{s-r} \  , \end{equation}
 whose components we call the holomorphic and anti-holomorphic projections.  By taking the direct sum over $r$ and $s$, this  defines a linear map 
 \begin{equation}    p = (p^{h} , p^{a}) : \mathcal{M}  \To  S  \oplus \overline{S}\ . \end{equation}
 
\subsection{A picture of $\mathcal{M}$} The bigraded algebra $\mathcal{M}$ can be depicted as follows.

\begin{center}
\fcolorbox{white}{white}{
  \begin{picture}(251,258) (33,-18)
    \SetWidth{1.0}
    \SetColor{Black}
    \Line[arrow,arrowpos=0,arrowlength=5,arrowwidth=2,arrowinset=0.2,flip](103.642,223.838)(103.642,-17.993)
    \Line[arrow,arrowpos=1,arrowlength=5,arrowwidth=2,arrowinset=0.2](34.547,51.101)(276.379,51.101)
    \Vertex(103.642,51.101){2.036}
    \Vertex(138.19,85.649){2.036}
    \Vertex(103.642,120.196){2.036}
    \Vertex(103.642,189.291){2.036}
    \Vertex(138.19,154.744){2.036}
    \Vertex(172.737,51.101){2.036}
    \Vertex(172.737,120.196){2.036}
    \Vertex(207.284,85.649){2.036}
    \Vertex(241.832,51.101){2.036}
    \Vertex(69.095,85.649){2.036}
    \Vertex(69.095,16.554){2.036}
    \Vertex(138.19,16.554){2.036}
    \Vertex(207.284,154.744){2.036}
    \Vertex(241.832,120.196){2.036}
    \Vertex(172.737,189.291){2.036}
    \Vertex(241.832,189.291){2.036}
    \Text(160,34)[lb]{ \Large{\Black{$\mathcal{M}_{2,0}$}}}
    \Text(236.074,34)[lb]{\Large{\Black{$\mathcal{M}_{4,0}$}}}
    \Text(284,47)[lb]{\Large{\Black{$r$}}}
    \Text(94.286,224.558)[lb]{\Large{\Black{$s$}}}
    \Text(74,183)[lb]{\Large{\Black{$\mathcal{M}_{0,4}$}}}
     \Text(110,151)[lb]{\Large{\Black{$\mathcal{M}_{1,3}$}}}
    \Text(74,115)[lb]{\Large{\Black{$\mathcal{M}_{0,2}$}}}
      \Text(145,115)[lb]{\Large{\Black{$\mathcal{M}_{2,2}$}}}
          \Text(108,82)[lb]{\Large{\Black{$\mathcal{M}_{1,1}$}}}
              \Text(178,82)[lb]{\Large{\Black{$\mathcal{M}_{3,1}$}}}
             \Text(30,82)[lb]{\Large{\Black{$\mathcal{M}_{-1,1}$}}}
                \Text(124,0)[lb]{\Large{\Black{$\mathcal{M}_{1,-1}$}}}
    \Text(50,3)[lb]{\Large{\Black{$\mathcal{M}_{-1,-1}$}}}
      \Text(70,55)[lb]{\Large{\Black{$\mathcal{M}_{0,0}$}}}
         \Text(227,141)[lb]{{\Blue{$\partial$}}}
        \Text(181,141)[lb]{{\Blue{$\LL$}}}
    \Text(227,161)[lb]{{\Blue{$\LL^{-1}$}}}
        \Text(181,161)[lb]{{\Blue{$\overline{\partial}$}}}
           \Text(173.457,55)[lb]{\Large{\Red{$\GE_2^*$}}}
    \Text(245.431,55)[lb]{\Large{\Red{$\GE_4$}}}
    \Text(110,124)[lb]{\Large{\Red{$\overline{\GE_2^*}$}}}
    \Text(110,191)[lb]{\Large{\Red{$\overline{\GE_4}$}}}
 \Text(134,92)[lb]{\Large{\Red{$\LL^{-1}$}}}   
 \Text(108,56)[lb]{\Large{\Red{$1$}}}   
 \Text(74,22)[lb]{\Large{\Red{$\LL$}}}   
   \Text(138,18)[lb]{\Large{\Red{$\GE_2^*\LL$}}}
 \Text(71,88)[lb]{\Large{\Red{$\overline{\GE_2^*}\LL$}}}     
  \SetWidth{0.5}
    \SetColor{Blue}
    \Line[dash,dashsize=7.197,arrow,arrowpos=0.5,arrowlength=4,arrowwidth=1,arrowinset=0.2](207.284,154.744)(241.832,189.291)
    \Line[dash,dashsize=7.197,arrow,arrowpos=0.5,arrowlength=4,arrowwidth=1,arrowinset=0.2](207.284,154.744)(172.737,189.291)
    \Line[dash,dashsize=7.197,arrow,arrowpos=0.5,arrowlength=4,arrowwidth=1,arrowinset=0.2](207.284,154.744)(241.832,120.196)
     \Line[dash,dashsize=7.197,arrow,arrowpos=0.5,arrowlength=4,arrowwidth=1,arrowinset=0.2](207.284,154.744)(172.737,120.196)
  \end{picture}
}
\end{center}

\vspace{0.1in}
The dashed arrows represent the action of $\LL, \LL^{-1}, \partial, \overline{\partial}$. Each solid circle represents a copy of $\mathcal{M}_{r,s}$ for $r+s$ even.  Some examples of modular forms are indicated in red.

\section{Primitives and obstructions}
In this section we study the equation 
\begin{equation} \label{Primeqn}
\partial F = f 
\end{equation} 
where $F, f \in \mathcal{M}$.  We say that $f \in \mathcal{M}$ has a modular $\partial$-primitive  if  $(\ref{Primeqn})$ holds for some $F$. 
We exhibit three obstructions for the existence of  modular primitives: the first is combinatorial, the second 
relates to modularity, and the third is arithmetic.

\subsection{Constants} Let us  view the operators $\partial_r, \overline{\partial}_s$ as (continuous) linear maps 
$$ \partial_r, \overline{\partial}_s   :   \Q[[q, \overline{q}]] [\LL^{\pm } ] \To\Q [[q, \overline{q}]][\LL^{\pm } ]  $$
of formal power series, setting aside questions of modularity for the time being. 

\begin{lem} \label{lemkers} The kernels of these maps are
\begin{eqnarray}  \label{kerpartial} \ker \partial_r  & =  & \LL^{-r} \Q [[ \overline{q}]] \\ 
 \ker \overline{\partial}_s & = & \LL^{-s} \Q [[ q]]\ . \nonumber
\end{eqnarray} 
In particular $\ker \partial_r \cap \ker \overline{\partial}_s$ vanishes if $r\neq s$ and is equal to  $\Q \Lef^{-r}$ if $r=s$. 
\end{lem}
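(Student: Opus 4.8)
The approach is to reduce the whole statement to a one-variable computation via the explicit formula $(\ref{partialactformula})$. That formula shows that $\partial_r$ respects the decomposition of $\Q[[q,\overline q]][\LL^{\pm}]$ into ``slices'' indexed by $(m,n)$: writing $P = \sum_k c_k \LL^k$ and using $\LL P' = \sum_k k\,c_k \LL^k$, it sends $P(\LL)\,q^m\overline q^n$ to $\bigl(2m\,\LL\,P + r\,P + \LL\,P'\bigr)\,q^m\overline q^n$. Because the expansion $(\ref{qexp})$ is unique, an element $f = \sum_{m,n\ge 0} P_{m,n}(\LL)\,q^m\overline q^n$ lies in $\ker \partial_r$ if and only if $D_{r,m}(P_{m,n}) = 0$ for all $m,n$, where $D_{r,m}\colon \Q[\LL^{\pm}] \to \Q[\LL^{\pm}]$ is the operator $P \mapsto 2m\,\LL\,P + r\,P + \LL\,P'$. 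Thus the lemma reduces to the computation of $\ker D_{r,m}$.

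The key steps are then as follows. First I would treat $m = 0$: here $D_{r,0}(\LL^k) = (r+k)\LL^k$, so $D_{r,0}$ acts diagonally with eigenvalue $r+k$ on $\LL^k$, and since $r + k \ne 0$ for $k \ne -r$ in characteristic zero, $\ker D_{r,0} = \Q\,\LL^{-r}$. Next, for $m \ne 0$: if $P \ne 0$ has top $\LL$-degree $K$ with leading coefficient $c_K \ne 0$, then in $D_{r,m}(P)$ the summand $2m\,\LL\,P$ contributes $2m\,c_K\,\LL^{K+1}$ while $r\,P + \LL\,P'$ has degree at most $K$; hence the $\LL^{K+1}$-coefficient of $D_{r,m}(P)$ is $2m\,c_K \ne 0$, so $D_{r,m}$ is injective and $\ker D_{r,m} = 0$. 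Assembling these two cases, an $f \in \ker \partial_r$ must have $P_{m,n} = 0$ for $m \ne 0$ and $P_{0,n} \in \Q\,\LL^{-r}$ for all $n$, i.e. $f = \LL^{-r}\sum_n c_n \overline q^n$; this gives $\ker \partial_r = \LL^{-r}\,\Q[[\overline q]]$. The identity $\ker \overline{\partial}_s = \LL^{-s}\,\Q[[q]]$ follows by running the same argument with the second line of $(\ref{partialactformula})$ (swapping the roles of $m,r$ with $n,s$ and of $q$ with $\overline q$), or by applying complex conjugation, which fixes $\LL$, exchanges $q$ and $\overline q$, and satisfies $\overline{\partial_r f} = \overline{\partial}_r \overline f$.

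For the final assertion about the intersection, I would just combine the two descriptions: an $f \in \ker \partial_r \cap \ker \overline{\partial}_s$ has $a^{(k)}_{m,n}(f) = 0$ unless $(k,m) = (-r,0)$ and also unless $(k,n) = (-s,0)$, so by uniqueness of the expansion $f = 0$ when $r \ne s$, while for $r = s$ only $a^{(-r)}_{0,0}$ survives and $f \in \Q\,\LL^{-r}$; conversely $\LL^{-r} \in \ker\partial_r \cap \ker\overline{\partial}_r$ since $D_{r,0}(\LL^{-r}) = 0$. I do not expect a genuine obstacle: the lemma is entirely elementary, and the only point deserving a moment's care is that the coefficient ring in $\LL$ is the \emph{Laurent polynomial} ring $\Q[\LL^{\pm}]$, so that only finitely many powers of $\LL$ occur and the leading-$\LL$-degree argument in the case $m \ne 0$ is valid; the power-series variables $q, \overline q$ play no role beyond indexing the slices.
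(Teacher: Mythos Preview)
Your proof is correct and complete. It differs from the paper's argument, which is shorter but less explicit: the paper first uses the identity $\partial_r(\LL^k f) = \LL^k \partial_{r+k} f$ (equation $(\ref{partialLk})$) to reduce to the case $r=0$, and then observes that $\partial_0 = (z-\overline{z})\,\partial/\partial z$, so its kernel consists of the antiholomorphic elements of the ring, namely $\Q[[\overline{q}]]$. Your approach instead works entirely on the formal side via the action formula $(\ref{partialactformula})$, slicing by $(m,n)$ and running a leading-$\LL$-degree argument. What the paper's route buys is brevity and a conceptual reason (antiholomorphicity) for the shape of the answer; what yours buys is that it stays purely combinatorial and never invokes the analytic meaning of $\partial/\partial z$, which is arguably more honest given that the operators are being treated here as maps of \emph{formal} power series. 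In fact the paper's one-line claim that ``antiholomorphic elements of $\Q[[q,\overline{q}]][\LL^{\pm}]$ are exactly $\Q[[\overline{q}]]$'' is itself justified by precisely the kind of coefficient computation you carry out.
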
 
\begin{proof} Since $\partial_r \LL^k f = \LL^k \partial_{r+k} f$ $(\ref{partialLk})$, we can assume,  by multiplying by $\LL^{r}$ that $r=0$. 
The kernel of  $\partial_0 = (z-\overline{z}){ \partial \over\partial z} $ consists of antiholomorphic functions.  The second formula in $(\ref{kerpartial})$  is the complex conjugate of the first.
\end{proof}

We now consider the kernel of the operator $\partial$ acting on the space $\mathcal{M}$. 

\begin{prop}  \label{propModularKernel} Let  $F \in \mathcal{M}_{r,s}$ such that $\partial_r F =0$. Then 
$$\Lef^r F  \  \in  \   \overline{M}_{s-r} \ ,$$
where $\overline{M}_n$ denotes the space of anti-holomorphic modular forms of weight $n$.
In the case   $r> s$ i.e., `below the diagonal',   $F$ vanishes.   In the case $r=s$ we have
$$\ker \partial \cap \mathcal{M}_{r,r} = \C\,  \Lef^{-r}\ .$$ \end{prop}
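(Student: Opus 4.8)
The plan is to combine the combinatorial description of the kernel from Lemma~\ref{lemkers} with the modular transformation property. Starting from $F \in \mathcal{M}_{r,s}$ with $\partial_r F = 0$, Lemma~\ref{lemkers} (applied to the $q$-expansion, which is legitimate since $\partial_r$ preserves expansions of the form $(\ref{qexp})$) gives $F \in \LL^{-r}\C[[\overline{q}]]$; that is, $\LL^r F$ is a formal power series in $\overline{q}$ alone, with no $\LL$ and no $q$. By $(\ref{partialLk})$ we have $\partial_r F = \partial_r(\LL^{-r} \cdot \LL^r F) = \LL^{-r}\partial_0(\LL^r F)$, so the hypothesis is equivalent to $\partial_0(\LL^r F) = 0$, i.e.\ $\LL^r F$ is anti-holomorphic as a function on $\HH$ (the kernel of $\partial_0 = (z-\overline z)\tfrac{\partial}{\partial z}$ consists of anti-holomorphic functions).

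Next I would check that $G := \LL^r F$ transforms like an anti-holomorphic modular form of weight $s-r$. Using $(\ref{fbimod})$ for $F$ and the fact that $\LL = i\pi(z-\overline z)$ scales as $\LL(\gamma z) = |cz+d|^{-2}\LL(z)$ under $\gamma \in \SL_2(\Z)$, one computes
\[
G(\gamma z) = \LL(\gamma z)^r F(\gamma z) = (cz+d)^{-r}(c\overline z+d)^{-r}\,(cz+d)^r(c\overline z+d)^s\, \LL(z)^r F(z) = (c\overline z+d)^{s-r} G(z).
\]
So $G$ is anti-holomorphic, satisfies the weight-$(s-r)$ anti-holomorphic modular transformation law, and (from $G \in \C[[\overline q]]$) is holomorphic at the cusp in $\overline q$; hence $G \in \overline{M}_{s-r}$, which is the first assertion. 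The case $r > s$ then follows because $\overline{M}_n = 0$ for $n < 0$ (a non-zero anti-holomorphic modular form has non-negative weight, e.g.\ by the valence formula applied to $\overline G$), forcing $G = 0$ and hence $F = 0$. For $r = s$, $G \in \overline{M}_0$, and $\overline{M}_0 = \C$ (constants), so $F = \LL^{-r}G \in \C\,\LL^{-r}$; conversely $\LL^{-r} \in \mathcal{M}_{r,r}$ lies in $\ker\partial$ since $\partial(\LL^{-r}\cdot 1) = \LL^{-r}\partial_0(1) = 0$. This gives $\ker\partial \cap \mathcal{M}_{r,r} = \C\,\LL^{-r}$.

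The only genuinely delicate point is justifying "$\overline{M}_n = 0$ for $n<0$" and "$\overline{M}_0 = \C$" in this slightly non-standard setting, where the relevant functions are only assumed to have expansions in $\C[[\overline q]]$. This reduces by complex conjugation to the classical statements about holomorphic modular forms $M_n$, which vanish for $n < 0$ and equal $\C$ for $n = 0$; the expansion hypothesis $G \in \C[[\overline q]]$ is exactly what guarantees holomorphy (no pole) at the cusp, so the classical valence/dimension formula applies. I expect this to be routine given the machinery already set up, so the main "obstacle" is really just the bookkeeping of the scaling of $\LL$ under $\gamma$ and keeping the two conventions (function-theoretic versus $q$-expansion kernel) aligned via $(\ref{partialLk})$.
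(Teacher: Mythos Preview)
Your proof is correct and follows essentially the same route as the paper: apply Lemma~\ref{lemkers} to see that $\LL^r F$ is anti-holomorphic, use the weight-$(-r,-r)$ transformation of $\LL$ to deduce that $\LL^r F$ transforms with weight $(0,s-r)$, and conclude via the classical vanishing of (anti-)holomorphic modular forms of negative weight. The paper's version is terser but the logic is identical.
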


\begin{proof} By lemma  \ref{lemkers}, we can write 
$ \Lef^r F  = \overline{g}$
where $ g: \HH \rightarrow \C$ is a holomorphic function.  
Since $f$ (respectively $\LL^r$) has weights $(r,s)$ (respectively $(-r,-r)$),  it follows that $ \overline{g}$ has weights $(0,s-r)$ and transforms like a modular form of weight $s-r$, i.e., 
$g( \gamma(z)) = (cz +d )^{s-r} g(z) $ for all $\gamma \in \SL_2(\Z)$ of the form $(\ref{gammaact})$. Thus $g \in M_{s-r}$. For the last part, use the well-known fact that there are no non-zero holomorphic modular forms of negative weight.
 \end{proof}

 Thus if equation $(\ref{Primeqn})$  has a solution,  it is unique up to addition  by an element  of $\C \Lef^{-r}$ if $h(F)=0$, and is unique if $h(F)>0$. 

\begin{cor}  Let $F \in \mathcal{M}_{r,s}$ and let  $f= \partial F $. There is a   solution $F' \in \mathcal{M}_{r,s}$ to (\ref{Primeqn}) 
whose antiholomorphic projection $p^a(F')$ vanishes. It is unique up to  addition by  a multiple of  $ \LL^{-r} \overline{\GE}_{s-r}$ for $s-r\geq 4$, where $\GE_{n}$ is the Eisenstein series (\ref{introEisdefn}).
\end{cor}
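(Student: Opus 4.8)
The plan is to combine Proposition~\ref{propModularKernel}, which computes $\ker\partial$, with the non-degeneracy of the Petersson inner product and the orthogonality of Eisenstein series to cusp forms. First I would record that the solutions of~$(\ref{Primeqn})$ lying in $\mathcal{M}_{r,s}$ form the coset $F+(\ker\partial\cap\mathcal{M}_{r,s})$, and that
\[ \ker\partial\cap\mathcal{M}_{r,s}\;=\;\LL^{-r}\,\overline{M}_{s-r}\ . \]
One inclusion is Proposition~\ref{propModularKernel}; the reverse holds because $\partial$ commutes with multiplication by $\LL^{-r}$ and annihilates anti-holomorphic functions, and $\overline{h}$ for $h\in M_{s-r}$ genuinely lies in $\mathcal{M}_{0,s-r}$ (its $q$-expansion is in $\C[[\overline{q}]]$). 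When $s-r<12$ one has $\overline{S}_{s-r}=0$, so $p^a$ vanishes identically and $F'=F$ works; thus the content is concentrated in the case $s-r\geq 12$, and, for the uniqueness clause, in $s-r\geq 4$.

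Next I would examine the linear map $\overline{S}_{s-r}\to\overline{S}_{s-r}$, $\psi\mapsto p^a(\LL^{-r}\psi)$. Substituting $\LL=-2\pi y$ into the pairing~$(\ref{IPpairing})$ with $n=s$ shows that, restricted to the subspace $\LL^{-r}\overline{S}_{s-r}\subset\mathcal{M}_{r,s}$, this pairing agrees up to the nonzero scalar $(-2\pi)^{-r}$ with the classical Petersson inner product transported to $\overline{S}_{s-r}$; in particular it is non-degenerate, and, by the definition of $p^a$ through this pairing and the identification $\mathrm{Hom}(\overline{S}_{s-r},\C)\cong\overline{S}_{s-r}$, the map $\psi\mapsto p^a(\LL^{-r}\psi)$ is an isomorphism (indeed proportional to the identity). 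Since $p^a(F)\in\overline{S}_{s-r}$, there is a unique $\psi$ with $p^a(\LL^{-r}\psi)=p^a(F)$; then $F':=F-\LL^{-r}\psi$ lies in $\mathcal{M}_{r,s}$, satisfies $\partial F'=f$ since $\LL^{-r}\psi\in\ker\partial$, and has $p^a(F')=0$. This proves existence.

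For the uniqueness clause ($s-r\geq 4$), two solutions with vanishing anti-holomorphic projection differ by some $v=\LL^{-r}\overline{h}\in\ker\partial\cap\mathcal{M}_{r,s}$, $h\in M_{s-r}$, with $p^a(v)=0$. Writing $h=c\,\GE_{s-r}+g_1$ with $c\in\C$, $g_1\in S_{s-r}$, I would invoke the orthogonality of Eisenstein series to cusp forms in the Petersson product — so that unfolding $\GE_{s-r}$ yields $p^a(\LL^{-r}\overline{\GE}_{s-r})=0$ — and deduce $p^a(\LL^{-r}\overline{g_1})=0$, hence $g_1=0$ by the isomorphism of the previous paragraph; thus $v\in\C\,\LL^{-r}\overline{\GE}_{s-r}$, as claimed. (When $s-r\leq 2$ the kernel of $\partial$ on $\mathcal{M}_{r,s}$ is $0$ or $\C\LL^{-r}$, and no Eisenstein term is needed.)

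The only inputs here that are not purely formal are the non-degeneracy of the Petersson inner product on $S_{s-r}$ and the vanishing $\langle g,\GE_{s-r}\rangle_{\mathrm{Pet}}=0$ for $g\in S_{s-r}$; both are classical, the latter by unfolding. I expect the genuine work to be bookkeeping — keeping the complex conjugations straight when transporting the Petersson form to $\overline{S}_{s-r}$ and matching it with the definition of $p^a$ — rather than any real obstacle.
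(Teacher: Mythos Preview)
Your proposal is correct and follows essentially the same route as the paper's proof: subtract from $F$ an element of $\ker\partial\cap\mathcal{M}_{r,s}=\LL^{-r}\overline{M}_{s-r}$ coming from a cusp form, chosen via non-degeneracy of the Petersson inner product, to kill $p^a$; then use that the orthogonal complement of $S_{s-r}$ in $M_{s-r}$ is spanned by the Eisenstein series for the uniqueness clause. You spell out more carefully than the paper does that the map $\psi\mapsto p^a(\LL^{-r}\psi)$ is the Petersson form transported by the nonzero scalar $(-2\pi)^{-r}$, which is exactly the content behind the paper's one-line appeal to non-degeneracy and its choice of correction $F'=F-(-2\pi)^r\LL^{-r}\overline{g}$.
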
 
\begin{proof} 
Since the Petersson inner product is non-degenerate, there exists a unique cusp form $g\in S_{s-r}$ such that $p^{a} (\overline{g}) = p^{a}(F)$.  Then 
$F' = F- (-2\pi)^r \LL^{-r} \overline{g}$ has the required properties. The second part follows since the  orthogonal complement of $S_{s-r}$ in $M_{s-r}$ is exactly the  vector space generated by the Eisenstein series.
\end{proof}

\subsection{Combinatorial obstructions}
The maps $\partial_r, \overline{\partial}_s$ are far from surjective.

\begin{lem} \label{lemvanishingcond} Suppose that  $f \in \C[[q, \overline{q}]][\LL^{\pm}]$ satisfies $f = \partial_r F$ for some $F \in \C[[q, \overline{q}]][\Lef^{\pm}]$. Then the coefficients in its expansion  $(\ref{qexp})$
 satisfy 
\begin{equation}  \label{avanishcond} 
a^{(-r)}_{0,n} = 0 \qquad \hbox{ for all } n\geq 0 \ .
\end{equation} 
\end{lem}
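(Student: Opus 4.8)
The plan is to use the explicit formula for the action of $\partial_r$ from Lemma~\ref{lempartialsqexp}, namely $\partial_r(\LL^k q^m \overline{q}^n) = (2m\LL + r + k)\LL^k q^m \overline{q}^n$, and read off what constraint surjectivity onto a given coefficient places on the source.

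First I would write $F = \sum_{k,m,n} b^{(k)}_{m,n} \LL^k q^m \overline{q}^n$ and compute $\partial_r F$ termwise. The key observation is that $\partial_r$ does not change the $(m,n)$ bidegree: a term $\LL^k q^m \overline{q}^n$ in $F$ contributes only to terms $q^m \overline{q}^n$ in $\partial_r F$, specifically to $\LL^{k+1}$ (with coefficient $2m\, b^{(k)}_{m,n}$) and to $\LL^k$ (with coefficient $(r+k)\, b^{(k)}_{m,n}$). So I would focus attention on the $m=0$ strand: for the terms $q^0 \overline{q}^n = \overline{q}^n$, the $2m\LL$ contribution vanishes identically, and we are left with $\partial_r\big(\sum_k b^{(k)}_{0,n}\LL^k \overline{q}^n\big) = \sum_k (r+k) b^{(k)}_{0,n}\LL^k \overline{q}^n$.

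Therefore the coefficient of $\LL^{-r}\overline{q}^n$ in $f = \partial_r F$ equals $(r + (-r)) b^{(-r)}_{0,n} = 0$, regardless of the value of $b^{(-r)}_{0,n}$. This is exactly the claimed vanishing $a^{(-r)}_{0,n} = 0$. I would present this as a one-line computation after isolating the $m=0$ part, perhaps noting that this reflects the fact that $\LL^{-r}$ spans (a piece of) the kernel of $\partial_r$ as recorded in Lemma~\ref{lemkers}: the operator $\partial_r$ kills $\LL^{-r}\overline{q}^n$ only in the sense that it cannot produce $\LL^{-r}\overline{q}^n$, since producing it would require a source term at $\LL^{-r-1}$ scaled by $2m=0$ or at $\LL^{-r}$ scaled by $r+k = 0$.

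There is no real obstacle here; the only mild care needed is bookkeeping with the two contributions (from $\LL^{k}$ and $\LL^{k+1}$ terms of $F$) and noting that for $m=0$ only the diagonal $\LL^k \mapsto \LL^k$ piece survives, so the matrix of $\partial_r$ on the $m=0$ strand is diagonal with entries $r+k$, which vanishes precisely at $k = -r$. If anything, the "hard part" is purely expository: making clear why only the $a^{(-r)}_{0,n}$ coefficients (and not, say, $a^{(-r)}_{0,0}$ alone, or coefficients with $m > 0$) are obstructed — the point being that for $m \geq 1$ the factor $2m\LL$ lets one hit every power of $\LL$ by integrating, whereas at $m = 0$ one loses the ability to raise the $\LL$-degree and the cokernel picks up exactly the bottom slot $\LL^{-r}$.
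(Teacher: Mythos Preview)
Your argument is correct and is exactly the approach the paper takes: the paper's proof is the single line ``Follows immediately from lemma~\ref{lempartialsqexp}'', and you have simply unpacked that formula at $m=0$, $k=-r$. Your additional commentary on why $m\geq 1$ poses no obstruction is correct but goes beyond what the lemma asserts.
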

\begin{proof} Follows immediately from lemma $\ref{lempartialsqexp}$. 
\end{proof} 

This is not the only constraint: for every $m,n \geq 0$,  there is a condition on the  $a^{(k)}_{m,n}$, for varying $k$, in order  for $f$ to lie in the image of the map $\partial_r$.  Nonetheless,  $(\ref{avanishcond})$ is already sufficient to rule out the existence of primitives in many interesting cases. 

\begin{cor} \label{lemnoE2prim} There exists no element $F \in \C[[q, \overline{q}]][\Lef^{\pm}]$ satisfying 
$\partial_0 F = \LL  \GE^*_2$.
\end{cor}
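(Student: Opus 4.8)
The plan is to apply Lemma~\ref{lemvanishingcond} to the explicit $q$-expansion of $\LL\,\GE_2^*$. First I would compute $\GE_2^*$ in the ring $\C[[q,\overline q]][\LL^{\pm}]$: by $(\ref{E2stardef})$ we have $\GE_2^* = \GE_2 - \tfrac{1}{4\LL}$, so
\begin{equation}
\LL\,\GE_2^* \;=\; \LL\,\GE_2 - \tfrac14 \;=\; \LL\Big(-\tfrac{1}{24} + \sum_{n\geq 1}\sigma_1(n)q^n\Big) - \tfrac14\ .
\end{equation}
Reading off the coefficients $a^{(k)}_{m,n}$ in the notation of $(\ref{qexp})$, the only nonzero terms are $a^{(1)}_{0,0} = -\tfrac{1}{24}$, $a^{(0)}_{0,0} = -\tfrac14$, and $a^{(1)}_{m,0} = \sigma_1(m)$ for $m\geq 1$. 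In particular $a^{(0)}_{0,0} = -\tfrac14 \neq 0$.

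Now I would invoke Lemma~\ref{lemvanishingcond} with $r=0$: if $\LL\,\GE_2^* = \partial_0 F$ for some $F\in\C[[q,\overline q]][\LL^{\pm}]$, then the coefficients must satisfy $a^{(-r)}_{0,n} = a^{(0)}_{0,n} = 0$ for all $n\geq 0$. Taking $n=0$ gives $a^{(0)}_{0,0} = 0$, contradicting $a^{(0)}_{0,0} = -\tfrac14$. Hence no such $F$ exists, which is exactly the assertion of the corollary.

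There is essentially no obstacle here: the statement is an immediate consequence of the combinatorial obstruction already isolated in Lemma~\ref{lemvanishingcond}, and the only thing to check is the harmless bookkeeping that the constant-in-$q$, $\LL^0$-coefficient of $\LL\,\GE_2^*$ is the nonzero rational number $-\tfrac14$ (coming precisely from the correction term $-\tfrac{1}{4\LL}$ that was introduced to make $\GE_2^*$ modular). It may be worth remarking that this is the first of the three promised obstructions ("the first is combinatorial"), and that the same computation shows more: since $\ker\partial_0$ consists of antiholomorphic series by Lemma~\ref{lemkers}, one sees directly that $\partial_0$ cannot produce any $\LL^0 q^0$-term, so the obstruction is unavoidable regardless of modularity.

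\begin{proof}
By $(\ref{E2stardef})$ and $(\ref{introEisdefn})$,
\begin{equation}
\LL\,\GE_2^* \;=\; \LL\,\GE_2 - \tfrac14 \;=\; -\tfrac14 - \tfrac{1}{24}\LL + \LL\sum_{n\geq 1}\sigma_1(n)q^n\ .
\end{equation}
In the notation of $(\ref{qexp})$ the coefficient $a^{(0)}_{0,0}$ of $\LL^0 q^0\overline q^0$ equals $-\tfrac14\neq 0$. If there were $F\in\C[[q,\overline q]][\LL^{\pm}]$ with $\partial_0 F = \LL\,\GE_2^*$, then Lemma~\ref{lemvanishingcond} applied with $r=0$ would force $a^{(0)}_{0,n} = 0$ for all $n\geq 0$, and in particular $a^{(0)}_{0,0} = 0$, a contradiction. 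Hence no such $F$ exists.
\end{proof}
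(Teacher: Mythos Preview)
Your proof is correct and follows exactly the same approach as the paper: compute $\LL\,\GE_2^* = \LL\,\GE_2 - \tfrac14$, observe that the coefficient $a^{(0)}_{0,0} = -\tfrac14$ is nonzero, and note that this violates the vanishing condition of Lemma~\ref{lemvanishingcond} with $r=0$. The paper's proof is just a two-sentence version of what you wrote.
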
 
\begin{proof} By $(\ref{E2stardef})$, the $a^{(0)}_{0,0}$ term in $\Lef \GE^*_2 = \LL \GE_2 - {1 \over 4}$ is non-zero.  This violates $(\ref{avanishcond})$.
\end{proof}

\subsection{A condition involving  the pole filtration}

\begin{lem} If $f$ satisfies   the condition 
\begin{equation} \label{poleassumption} 
f \quad \in \quad P^{1-r} \mathcal{M}_{r+1,s-1}  \ ,
\end{equation} 
then  it  admits a combinatorial primitive
$F \in P^{-r} \C[[q,\overline{q}]][\LL^{\pm}]$
such that $\partial_r F = f$. 
\end{lem}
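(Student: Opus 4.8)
The plan is to solve the equation $\partial_r F = f$ term by term in the $q$-expansion, using the explicit formula for the action of $\partial_r$ from Lemma \ref{lempartialsqexp}. Write $f = \sum_{k \geq 1-r}\sum_{m,n\geq 0} a^{(k)}_{m,n} \LL^k q^m \overline{q}^n$, the lower bound on $k$ being precisely the hypothesis $(\ref{poleassumption})$. By $(\ref{partialactformula})$, $\partial_r$ sends $\LL^k q^m \overline{q}^n$ to $(2m\LL + r + k)\LL^k q^m \overline{q}^n$, so it acts on the subspace spanned by $\{\LL^k q^m \overline{q}^n : k \in \Z\}$ (with $m,n$ fixed) as the operator $2m\LL + (r + \LL\tfrac{d}{d\LL})$ on Laurent polynomials in $\LL$ — i.e. multiplication by $2m$ composed with a shift, plus a diagonal operator with eigenvalue $r+k$ on $\LL^k$. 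One should seek $F$ in the form $F = \sum_{k\geq -r}\sum_{m,n} b^{(k)}_{m,n}\LL^k q^m\overline{q}^n$ and match coefficients.

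First I would dispose of the case $m = 0$. Here $\partial_r$ acts on $\LL^k \overline{q}^n$ by multiplication by $(r+k)$, which is invertible for every $k \neq -r$. So for $m=0$ one simply sets $b^{(k)}_{0,n} = a^{(k)}_{0,n}/(r+k)$ for all $k \geq 1-r$; the hypothesis $(\ref{poleassumption})$ guarantees $a^{(-r)}_{0,n} = 0$, so the only potentially problematic index $k=-r$ never arises, and the resulting $F$ has no pole worse than $\LL^{1-r}$ in the $m=0$ part — in particular $F \in P^{-r}$ there, comfortably.

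For $m \geq 1$ the operator is $2m\LL + (r + \LL\tfrac{d}{d\LL})$, which is a first-order difference operator on the coefficient sequence $(b^{(k)}_{m,n})_k$: comparing the coefficient of $\LL^k$ on both sides gives the recursion $2m\, b^{(k-1)}_{m,n} + (r+k)\, b^{(k)}_{m,n} = a^{(k)}_{m,n}$. Since $f \in \C[[q,\overline{q}]][\LL^{\pm}]$, for fixed $m,n$ only finitely many $a^{(k)}_{m,n}$ are nonzero, say $a^{(k)}_{m,n} = 0$ for $k > N$. Then I would solve the recursion by downward induction on $k$: set $b^{(k)}_{m,n} = 0$ for $k > N$, and for $k \leq N$ read off $b^{(k-1)}_{m,n} = \bigl(a^{(k)}_{m,n} - (r+k)b^{(k)}_{m,n}\bigr)/(2m)$, which is well-defined since $m \neq 0$. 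This produces a sequence that is eventually zero going up but a priori runs forever going down — so the real content, and the place to be careful, is to check that the $b^{(k)}_{m,n}$ terminate as $k \to -\infty$, and in fact vanish for $k < -r$, so that $F \in P^{-r}\C[[q,\overline{q}]][\LL^{\pm}]$ as claimed.

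The key observation making the truncation work is that the hypothesis $(\ref{poleassumption})$ says $a^{(k)}_{m,n} = 0$ for $k < 1-r$, i.e. for $k \leq -r$. I claim $b^{(k)}_{m,n} = 0$ for all $k \leq -r-1$. Indeed, suppose inductively (downward) that $b^{(k)}_{m,n} = 0$ for some $k$ with $-r \leq k$; hmm, one needs the right anchor. The clean way: solve the recursion upward instead from a putative bottom. Equivalently, observe that any solution of $2m\, b^{(k-1)}_{m,n} + (r+k)b^{(k)}_{m,n} = a^{(k)}_{m,n}$ with all $a^{(k)}_{m,n} = 0$ for $k \leq -r$ and $b^{(k)}_{m,n} = 0$ for $k \gg 0$ is forced to satisfy: at $k = -r$, $2m\, b^{(-r-1)}_{m,n} + 0 \cdot b^{(-r)}_{m,n} = 0$, hence $b^{(-r-1)}_{m,n} = 0$; and then for $k = -r-1$, $2m\, b^{(-r-2)}_{m,n} + (-1)b^{(-r-1)}_{m,n} = 0$ gives $b^{(-r-2)}_{m,n} = 0$, and so on downward, so $b^{(k)}_{m,n} = 0$ for all $k \leq -r-1$. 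This both truncates the expansion and places $F$ in $P^{-r}$. Note this forces the value $b^{(-r)}_{m,n}$, which is otherwise a free parameter (the $k=-r$ slot has vanishing diagonal coefficient $r+k=0$), to be determined by $b^{(-r)}_{m,n} = a^{(-r+1)}_{m,n}/(2m) - \cdots$ — consistent with the downward solution above — and no obstruction appears precisely because $a^{(-r)}_{m,n} = 0$ removes the one equation that the vanishing diagonal term would otherwise fail to solve. Assembling the $b^{(k)}_{m,n}$ over all $m,n,k$ gives $F = \sum b^{(k)}_{m,n}\LL^k q^m\overline{q}^n \in P^{-r}\C[[q,\overline{q}]][\LL^{\pm}]$ with $\partial_r F = f$, which is the assertion. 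I expect the main obstacle to be purely bookkeeping: making the two cases $m=0$ and $m\geq 1$ and the role of the vanishing diagonal entry at $k=-r$ fit together cleanly, and confirming that the hypothesis $(\ref{poleassumption})$ is exactly what is needed to kill the one obstruction (this is the same mechanism as Lemma \ref{lemvanishingcond} and Corollary \ref{lemnoE2prim}, now used in the positive direction).
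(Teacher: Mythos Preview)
Your proposal is correct and follows essentially the same approach as the paper: both set up the recursion $a^{(k)}_{m,n} = 2m\,b^{(k-1)}_{m,n} + (r+k)\,b^{(k)}_{m,n}$ from $(\ref{partialactformula})$, solve it by downward induction on $k$ for $m\geq 1$ (using $2m\neq 0$), observe that the hypothesis $a^{(-r)}_{m,n}=0$ forces $b^{(k)}_{m,n}=0$ for $k<-r$ so the expansion truncates in $P^{-r}$, and handle $m=0$ separately via the invertible diagonal action $b^{(k)}_{0,n}=a^{(k)}_{0,n}/(r+k)$. The only cosmetic difference is the order in which the two cases are treated.
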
 

\begin{proof} Denote the coefficients in the expansion of $f$ by $a^{(k)}_{m,n}$. By assumption they vanish for all  $k\leq -r$ and $k\geq N$ for some $N\geq -r$.  Denote  the coefficients of $F$ by $b^{(k)}_{m,n}$. Equation $(\ref{partialactformula})$ is equivalent to the set of equations
\begin{equation} \label{amnbmn} a_{m,n}^{(k)}  \quad =     \quad 2m \,b_{m,n}^{(k-1)} + (r+k)\, b_{m,n}^{(k)} \end{equation}
for every $m,n$.  Fix an $n$ and an  $m\geq 1$. Then, if we set $b_{m,n}^{(k)} =0$ for all $k\geq N$,  $(\ref{amnbmn})$ holds for all $k\geq N+1$. For $k=N$, we can solve it by setting 
$$ a_{m,n}^{(N)}  \quad  =     \quad 2m \,b_{m,n}^{(N-1)}\ . $$
Suppose we have  determined $b_{m,n}^{(k)}$ for all $k> K$. Then equation $(\ref{amnbmn})$ in the case $k=K+1$ can be solved uniquely for $b^{(K)}_{m,n}$ since $2m\neq 0$.   The process terminates at $k=1-r$, since for $k=-r$ equation $(\ref{amnbmn})$ reduces to:
$$ 0 =  a_{m,n}^{(-r)}  =     2m \,b_{m,n}^{(-r-1)}  +  0 \ . $$
Setting $b_{m,n}^{(k)}=0 $ for all $k<-r$, we therefore obtain a complete solution to $(\ref{amnbmn})$ for all values of $k$.  In the case $m=0$, 
the equations  $(\ref{amnbmn})$ can be solved trivially, provided that  $(\ref{avanishcond})$ holds. This is certainly implied by $(\ref{poleassumption})$.
\end{proof}

The commutation relation $\sfh = [\partial, \overline{\partial}]$ implies that 
$ h\, f + \overline{\partial} \partial f$ is in the image of $\partial$ for all $f \in \mathcal{M}$.
This remark, combined with  $(\ref{poleassumption})$, enables   
one to  prove the existence of combinatorial primitives
in many cases of interest.

\subsection{Obstructions from the  Petersson inner product} Another obstruction comes from the fact that a formal power series solution to $(\ref{Primeqn})$  is not necessarily modular. 

\begin{thm} \label{thmpartialsorthogtocusp} Let $f \in \mathcal{M}_{r,s}$. If  $f$ has a $\partial$-primitive in $ \mathcal{M}$ then 
\begin{equation} \langle f, g \rangle= 0  \qquad \hbox{ for all } \quad g \in S_{r-s} \hbox{ holomorphic} \  . \end{equation} 
In particular, $f$ is in the kernel of the holomorphic projection $(\ref{holprojection})$. 
\end{thm}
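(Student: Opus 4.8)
The plan is to show that $\langle f, g\rangle$ reduces to a boundary term which vanishes, using the adjointness of $\partial$ and $\overline{\partial}$ with respect to the pairing $(\ref{IPpairing})$ and the fact that holomorphic cusp forms are annihilated by $\overline{\partial}$. Concretely, suppose $f = \partial F$ with $F \in \mathcal{M}_{r-1,s+1}$. Since $g \in S_{r-s} \subset \mathcal{S}_{r-s,0}$, the relevant instance of $(\ref{IPpairing})$ is the case $n = r$: both $\langle f, g\rangle$ and $\langle F, ?\rangle$ must make sense, so I will need to pair $F \in \mathcal{M}_{r-1,s+1}$ with something in $\mathcal{S}_{s-r,0}$-ish; the natural candidate is $\overline{\partial}_{-r}$ applied to $g$ (or rather to $y^r \overline{g}$, appropriately). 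The cleanest route is to work directly with the integral: write $\langle \partial F, g\rangle = \int_{\mathcal{D}} (\partial_{r-1} F)(z)\, \overline{g(z)}\, y^r\, d\mathrm{vol}$, expand $\partial_{r-1}$ via Definition \ref{partialdefn} as $(z-\overline{z})\partial/\partial z + (r-1)$, and integrate by parts in $z$ (equivalently, apply Stokes to the $1$-form obtained from $F(z)\overline{g(z)} y^{r}\, d\mathrm{vol}$ paired against $\partial/\partial z$). The $\partial/\partial z$ derivative hitting $\overline{g(z)} y^r$ produces a term that, because $g$ is holomorphic, only involves $\partial(y^r)/\partial z$, and after organizing constants this exhibits $\langle \partial F, g\rangle$ as $\pm\langle F, \overline{\partial}_{?} g\rangle$ plus a boundary integral over $\partial \mathcal{D}$.

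The first key step, then, is to establish the adjunction formula: for suitable weights, $\langle \partial F, g\rangle = \langle F, \overline{\partial}\, g\rangle$ up to an explicit nonzero scalar and a boundary term, valid for $F \in \mathcal{M}$ and $g \in \mathcal{S}$ holomorphic. The second key step is that $\overline{\partial}\, g = 0$ when $g$ is a holomorphic modular form: indeed by Lemma \ref{lempartialsqexp}, $\overline{\partial}_0(q^m) = (2m\cdot 0 + 0 + 0) q^m$ — wait, more carefully, a holomorphic cusp form of weight $k$ lies in $\mathcal{S}_{k,0}$, and $\overline{\partial}_0$ acts on $\LL^0 q^m \overline{q}^0$ by multiplication by $2\cdot 0\cdot \LL + 0 + 0 = 0$, so $\overline{\partial}_0 g = 0$. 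Hence the main term drops out and only the boundary term survives. The third key step is that the boundary term vanishes: the boundary $\partial\mathcal{D}$ consists of the two vertical sides (identified by $z \mapsto z+1$, on which the integrand of a weight-$(0,0)$ modular form agrees, so the contributions cancel by opposite orientation) and the arc on $|z|=1$ (on which the involution $z \mapsto -1/z$ likewise identifies the two halves with opposite orientation); modularity of the integrand $F\overline{g}y^r$ of total weight zero is exactly what forces these contributions to cancel in pairs, and near the cusp $i\infty$ there is no boundary contribution because $g \in \mathcal{S}$ decays exponentially while $F$ grows at most polynomially.

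The main obstacle I anticipate is the careful bookkeeping of the boundary term and the decay at the cusp: one must check that the $1$-form being integrated is genuinely $\SL_2(\Z)$-invariant (or anti-invariant in the right way) so that the side-identifications of $\mathcal{D}$ produce exact cancellation rather than merely formal cancellation, and one must confirm that the truncation of $\mathcal{D}$ at height $y = Y$ contributes a horizontal boundary integral tending to $0$ as $Y \to \infty$ — this uses $g \in \mathcal{S}$ (exponential decay of $\overline{g}$) against the at-most-polynomial growth of $F$ and the polynomial factor $y^r$, exactly as in the finiteness argument for $(\ref{IPpairing})$. Once the boundary term is controlled, the conclusion is immediate: $\langle f, g\rangle = \pm\langle F, \overline{\partial}\, g\rangle + (\text{boundary}) = 0$, and since the Petersson product is non-degenerate on $S_{r-s}$, this says precisely that the holomorphic projection $p^h(f)$ of $(\ref{holprojection})$ vanishes.
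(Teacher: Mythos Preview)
Your proposal is correct and follows essentially the same route as the paper: apply Stokes' theorem to a suitable $1$-form built from $F$ and $\overline{g}$, use holomorphicity of $g$ so that only the boundary term survives, and then argue that the boundary contributions cancel by modular invariance on the sides of $\mathcal{D}$ and vanish at the cusp by exponential decay of $g$. The only cosmetic difference is that the paper first replaces $F$ by $\LL^{r-1}F \in \mathcal{M}_{0,s-r+2}$ via $(\ref{partialLk})$, so that the $1$-form $\omega = F\,\overline{g}\,d\overline{z}$ is manifestly of weight $(0,0)$ and the Stokes computation $d\omega = (-2\pi)^{r-2} f\,\overline{g}\,y^r\,d\mathrm{vol}$ is a one-liner; your direct integration-by-parts without this normalization works but requires slightly more bookkeeping of the powers of $(z-\overline{z})$.
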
 
\begin{proof}  By multiplying by $\LL^{r-1}$ and appealing to $(\ref{partialLk})$,  we see that the equation $\partial F' = f$ has a solution  if and only if there exists $F  \in \mathcal{M}_{0,s-r+2}$ such that 
$$\partial_0   (i \pi F) = \LL^{r-1} f\ .$$
From the definition of $\partial_0$, this implies that 
$${\partial F \over \partial z}  = \LL^{r-2} f\ .$$ 
Let $g\in S_{r-s}$ be  a holomorphic cusp form and consider the differential form 
$$\omega = F \overline{g} \, d\overline{z}$$
It is $\SL_2(\Z)$-invariant, since $F, \overline{g}, d \overline{z}$ are of weights $(0,s-r+2), (0,r-s)$, $(0,-2)$ respectively, and therefore their product is of type $(0,0)$. It  satisfies 
$$ d \omega = {\partial F \over \partial z}  \overline{g(z)} dz \wedge d  \overline{z} = \LL^{r-2} f(z) \overline{g(z) }   
\,   dz \wedge d \overline{z}  = (-2 \pi)^{r-2}    \, f(z) \overline{g(z)} y^r \, d \mathrm{vol} \ ,$$
which is also  of type $(0,0)$.  By Stokes' theorem we have 
$$\int_{\partial \mathcal{D}} \omega = \int_{\mathcal{D}} d \omega   =  (-2 \pi)^{r-2} \,  \langle f, g\rangle\ .$$
Consider the left-hand integral along the boundary  $\partial \mathcal{D}$ of the standard fundamental domain. By a classical argument using the modular invariance of $\omega$, it gives zero, since the contributions along the vertical line segments (from $\rho$ to $i \infty$ and $i \infty$ to $-\overline{\rho}$, where $\rho^2+\rho+1=0$) cancel due to translation-invariance $\omega(z) = \omega(z+1)$; the contributions along the segments of the circle $|z|=1$  from $\rho$ to $i$ and from $i$ to $-\overline{\rho}$ cancel due to $\omega(-z^{-1}) = \omega(z)$; and finally the contribution along a path  from $i\infty$ to $i \infty +1$, which corresponds to a  small loop in the $q$-disc, also gives zero because $g$ is cuspidal. 
\end{proof}  
The statement of the theorem can  formally be written
$\langle \partial F, g \rangle = 0$ if $g\in S$. 
By taking its  complex conjugate we also deduce that 
$\langle \overline{\partial} F, \overline{g} \rangle = 0$
for all $F \in \mathcal{M}_{r,s}$ and all cusp forms $g \in S_{s-r}$.
These equations can be written
$$p^h(\partial (F) ) =0 \qquad \hbox{ and } \qquad  p^a ( \overline{\partial} (F)   ) = 0 \qquad \hbox{ for all } F \in \mathcal{M} \ .$$  

\begin{cor} \label{cornocuspprim} For every non-zero cusp form $f \in S_{n}$, and every $k\in \Z$, the equation 
$\partial F = \LL^k f$ has no solution in $\mathcal{M}$. 
\end{cor}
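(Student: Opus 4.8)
The plan is to deduce this from Theorem \ref{thmpartialsorthogtocusp} together with the scaling identity (\ref{scaleIP}). First I would record the bidegrees involved. A holomorphic cusp form $f\in S_n$ satisfies (\ref{fbimod}) with weights $(n,0)$ and has vanishing constant term, so $f\in \mathcal{S}_{n,0}\subset \mathcal{M}_{n,0}$; consequently $\LL^k f\in \mathcal{M}_{n-k,-k}$, and any solution of $\partial F=\LL^k f$ would necessarily lie in $\mathcal{M}_{n-k-1,\,1-k}$.

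Suppose, for contradiction, that such an $F\in\mathcal{M}$ exists. The element $\LL^k f\in \mathcal{M}_{n-k,-k}$ has $h=(n-k)-(-k)=n$, so Theorem \ref{thmpartialsorthogtocusp} applies to it and yields $\langle \LL^k f,\,g\rangle=0$ for every holomorphic cusp form $g\in S_{n}$. Here $\langle\,\cdot\,,\,\cdot\,\rangle$ is the pairing (\ref{IPpairing}) in its holomorphic-projection specialisation $n=r$, i.e.\ integration of $\LL^k f\,\overline{g}\,y^{\,n-k}$ over $\mathcal{D}$.

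Next I would invoke (\ref{scaleIP}): $\langle \LL^k f,\,g\rangle=(-2\pi)^k\langle f,\,g\rangle$, where on the right $f$ is viewed in $\mathcal{M}_{n,0}$ and paired against $g\in S_n$; by the remark following (\ref{IPpairing}) this is exactly the classical Petersson inner product on $S_n$. Hence $\langle f,g\rangle=0$ for all $g\in S_n$; taking $g=f$ gives $\int_{\mathcal{D}}|f(z)|^2 y^n\,d\mathrm{vol}=0$, forcing $f=0$, contrary to hypothesis. Equivalently, one may observe that the holomorphic projection $p^h(\LL^k f)$ equals $(-2\pi)^k f\neq 0$, directly contradicting the last assertion of Theorem \ref{thmpartialsorthogtocusp}.

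The argument is essentially immediate once the earlier results are in hand; the only point deserving a moment's attention — and it is not really an obstacle — is the bookkeeping of weights, so as to ensure that the pairing against $\LL^k f$ is precisely the one occurring in Theorem \ref{thmpartialsorthogtocusp}, and that (\ref{scaleIP}) is applied with exponent $k$ so that $\LL^k y^{\,n-k}=(-2\pi)^k y^{\,n}$ reconstitutes the Petersson measure.
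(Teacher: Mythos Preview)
Your proof is correct and essentially the same as the paper's. The only difference is cosmetic: the paper first uses $(\ref{partialLk})$ to reduce to $k=0$ (since $\partial$ commutes with multiplication by $\LL^k$, a primitive of $\LL^k f$ yields one for $f$), and then applies Theorem~\ref{thmpartialsorthogtocusp} with $g=f$; you instead apply the theorem directly to $\LL^k f$ and pull the factor of $\LL^k$ out of the pairing via $(\ref{scaleIP})$. Both routes arrive at $\langle f,f\rangle=0$ and the same contradiction with positive-definiteness.
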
 
\begin{proof} By $(\ref{partialLk})$, we can  assume that $k=0$. If $F$ were to exist, the previous theorem with $g=f$ would imply that $0 = \langle f, f \rangle$.
But this  contradicts the fact that the Petersson inner product is positive-definite. 
\end{proof}

Primitives of cusp forms do exist if one allows poles at the cusp \S\ref{sectMercusp} and \cite{Mock}.

\subsection{Arithmetic obstructions}
Although this is largely irrelevant here, since we work mostly over the complex numbers, 
the equation $\partial F = f$ involves some subtle questions regarding the field of definition of the coefficients $a^{(k)}_{m,n}$. Fundamentally, complex conjugation is not rationally defined on algebraic de Rham cohomology.

For example, $\partial F = \GE_{4} \Lef$ has a unique solution given by a real analytic Eisenstein series $\mathcal{E}_{2, 0} \in \mathcal{M}_{2,0}$, to be defined in \S\ref{sectRAEisenstein},  but it has no solution with rational coefficients. This is because $\mathcal{E}_{2,0}$ involves the value of the Riemann zeta function $\zeta(3)$, which is  irrational as shown by Ap\'ery.
The examples of functions in $\mathcal{M}$ constructed in this paper arise from iterated integrals of modular forms, and their coefficients  $a^{(k)}_{m,n}$ are, in a certain sense,   periods. The period conjecture suggests that they are  transcendental.

\subsection{A class of modular iterated primitives} The  functions studied in this paper lie in a  special sub-class of functions inside $\mathcal{M}$ or $\mathcal{M}^!$. 
\begin{defn} Consider the largest space of functions 
$$\MI  \quad \subset \quad \bigoplus_{r,s\geq 0} P^{-r-s} \mathcal{M}_{r,s}  \ ,$$ 
equipped with an increasing `length' filtration $\MI_k \subset \MI$ such that $\MI_{k}=0$ if $k<0$,  and 
  every $F\in \MI_k$ satisfies
\begin{eqnarray} \label{MIdiffcond}
\partial F \quad & \in &  \quad \MI_k  \  + \  M[\LL] \times \MI_{k-1}   \\ 
\overline{\partial} F \quad & \in &  \quad \MI_k  \  +  \  \overline{M}[\LL] \times \MI_{k-1}  \nonumber 
\end{eqnarray} 
where $M$ (resp. $\overline{M}$) denotes the ring of holomorphic (anti-holomorphic) modular forms.  The  conditions $(\ref{MIdiffcond})$ are stable under the operation of taking sums of vector spaces, and therefore a largest  such space  exists and is unique. 
\end{defn}

By replacing $\MI$ with $\MI + \overline{\MI}$ we deduce that $\MI$ is closed under complex conjugation by  maximality.
This definition   is  computable: since
$\MI_k$ is contained  in the positive quadrant, equation $(\ref{MIdiffcond})$  implies that  any $F \in \MI_k$ of weights $(n,0)$ with $n\geq 0$  must satisfy
\begin{equation} \label{partialFonedge} \partial F \quad \in \quad M[\LL] \times \MI_{k-1}\ .
\end{equation}
In this region,  modular primitives are unique  by proposition $\ref{propModularKernel}$  (up to a possible constant when $n=0$).  
Then, for $F \in \MI_k$ of modular weights $(r,s)$ with $r\geq s$, the first equation of $(\ref{MIdiffcond})$ determines $F$ in terms of previously-determined functions by increasing induction on $s$.
The functions in the region $r<s$ are deduced by complex conjugation (or by  using  the second equation of $(\ref{MIdiffcond})$, starting from weights $(0,n)$). 

\begin{lem}  $\MI_0 = \C[\LL^{-1}]$.
\end{lem}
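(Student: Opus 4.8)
The plan is to show that $\MI_0$ consists precisely of the constants $\C[\LL^{-1}]$, using the length-zero case of condition $(\ref{MIdiffcond})$: since $\MI_{-1}=0$, any $F\in\MI_0$ satisfies $\partial F\in\MI_0$ and $\overline{\partial}F\in\MI_0$. First I would observe the easy inclusion $\C[\LL^{-1}]\subset\MI_0$: each $\LL^{-k}$ lies in $P^{-k}\mathcal{M}_{k,k}\subset\bigoplus_{r,s\geq 0}P^{-r-s}\mathcal{M}_{r,s}$, and $\partial(\LL^{-k})=\overline{\partial}(\LL^{-k})=0$ by Lemma~\ref{lempartialsqexp} (or since $\LL$ is ``constant'' for these operators), so $\C[\LL^{-1}]$ is a subspace satisfying $(\ref{MIdiffcond})$ with $k=0$, hence is contained in the largest such space $\MI_0$.

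For the reverse inclusion, let $F\in\MI_{0,r,s}$ with $r,s\geq 0$. The key point is that $\partial$ and $\overline{\partial}$ must stabilise $\MI_0$, and $\MI_0$ lives in the positive quadrant. Suppose $F$ has modular weights $(r,s)$ with $r>s$; then $\partial F\in\MI_{0,r+1,s-1}$, and applying $\overline{\partial}$ repeatedly moves us further below the diagonal while staying in $\MI_0\subset$ positive quadrant, forcing (via Proposition~\ref{propModularKernel}, which says there are no nonzero elements of $\ker\partial$ strictly below the diagonal, combined with an induction) that $F$ must eventually land in a vanishing space. More cleanly: iterating $\partial$ raises $r$ and lowers $s$; since $\MI_0\subset\bigoplus_{r,s\geq 0}\mathcal{M}_{r,s}$, after finitely many applications of $\partial$ the target has $s<0$, so some power $\partial^N F=0$. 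Similarly some power $\overline{\partial}^M F=0$.

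So the heart of the matter is: if $F$ lies in $\bigoplus_{r,s\geq 0}P^{-r-s}\mathcal{M}_{r,s}$ and is killed by a power of $\partial$ and by a power of $\overline{\partial}$, then $F\in\C[\LL^{-1}]$. To see this I would use the explicit action $(\ref{partialactformula})$ on the $q$-expansion $(\ref{qexp})$: $\partial_r(\LL^k q^m\overline{q}^n)=(2m\LL+r+k)\LL^k q^m\overline{q}^n$. On monomials with $m\geq 1$, $\partial_r$ acts as multiplication by a polynomial in $\LL$ of positive degree, so its powers never annihilate a nonzero such monomial (the leading term in $\LL$ never vanishes); hence $\partial^N F=0$ forces all coefficients with $m\geq 1$ to vanish, i.e.\ $F$ is antiholomorphic in $q$. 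Symmetrically, $\overline{\partial}^M F=0$ forces all coefficients with $n\geq 1$ to vanish. Thus $F=f^0\in\C[\LL^{\pm}]$ is a pure Laurent polynomial in $\LL$. Now on $\LL^k$, $\partial_r$ acts by $(r+k)\LL^k$; applying $\partial_r,\partial_{r+1},\dots$ successively, $\partial^N(\LL^k)=(r+k)(r+1+k)\cdots(r+N-1+k)\LL^k$. For this to vanish we need $r+j+k=0$ for some $0\leq j\leq N-1$, i.e.\ $k=-(r+j)\leq 0$ since $r\geq 0$ and $j\geq 0$; in fact $k<0$ unless $r=j=0$. Combined with the analogous statement from $\overline{\partial}$ and the constraint $F\in P^{-r-s}\mathcal{M}_{r,s}$ (so $k\geq-(r+s)$), and the fact that $\LL^k\in\mathcal{M}_{-k,-k}$ forces $r=s=-k$, we get $F\in\C\LL^{-r}$ with $r\geq 0$, hence $F\in\C[\LL^{-1}]$.

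The main obstacle I anticipate is bookkeeping the bidegrees correctly: one must be careful that $\MI_0$ is defined as a subspace of the \emph{bigraded} space $\bigoplus_{r,s\geq 0}P^{-r-s}\mathcal{M}_{r,s}$, so a general element is a finite sum of homogeneous pieces, and the argument above should be run on each bigraded component $F\in\MI_{0,r,s}$ separately; then one checks that the only surviving component in bidegree $(r,s)$ is the multiple of $\LL^{-r}$ when $r=s$ and zero otherwise, which reassembles to $\C[\LL^{-1}]$. The rest is the routine verification via $(\ref{partialactformula})$ that no power of $\partial$ or $\overline{\partial}$ can kill a monomial involving $q$ or $\overline q$ respectively, which is immediate from the non-vanishing of the leading $\LL$-coefficient.
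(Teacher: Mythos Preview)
Your argument is correct, but it takes a somewhat different route from the paper. The paper argues by induction on the second weight $s$: on the edge $s=0$ one has $\partial F=0$ directly (the target bidegree $(n+1,-1)$ lies outside the positive quadrant), and then Proposition~\ref{propModularKernel} kills $F$ for $n>0$; inductively, $\partial F$ lands in a bidegree already shown to vanish, so again $\partial F=0$ and Proposition~\ref{propModularKernel} applies. The diagonal case $r=s$ then gives $F\in\C\LL^{-r}$ by the same proposition. In other words, the paper never leaves the statement ``$\partial F=0$'' and uses the ready-made description of $\ker\partial$.

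You instead observe the weaker fact $\partial^{s+1}F=0=\overline{\partial}^{r+1}F$ in one stroke from the positive-quadrant constraint, and then compensate by doing a direct $q$-expansion analysis: the leading $\LL$-term of $\partial^N$ on a monomial with $m\geq 1$ picks up a factor $(2m)^N\neq 0$, so such monomials survive, forcing $F\in\C[\LL^{\pm}]$; the bidegree of $\LL^k$ then pins $F$ down. This is perfectly valid and more self-contained (it does not invoke the classification of antiholomorphic modular forms hidden in Proposition~\ref{propModularKernel}), at the cost of redoing by hand what that proposition already packages. Your computation of $\partial^N(\LL^k)$ is in fact superfluous once you note $\LL^k\in\mathcal{M}_{-k,-k}$, as you observe at the end.

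One small slip: you write that ``applying $\overline{\partial}$ repeatedly moves us further below the diagonal,'' but $\overline{\partial}$ has bidegree $(-1,1)$ and moves \emph{toward} the diagonal. You immediately abandon this for the correct ``more cleanly'' argument with $\partial$, so no harm is done.
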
 
\begin{proof}Since $\MI_{-1}=0$, any $F\in \MI_{0}$ of weights $(n,0)$ satisfies $\partial F=0$ by $(\ref{partialFonedge})$. If $n>0$, then $F$ vanishes by   proposition $\ref{propModularKernel}$.
Continuing in this manner, we see that any  $F\in \MI_{0}$ of weights $(r,s)$ for $r>s$ must also vanish, and in the case $r=s$, it must be of the form $F \in \C\LL^{-r}$.   Therefore $\MI_0 \subset \C[\LL^{-1}]$. Since $\partial \LL = \overline{\partial} \LL=0$,  the ring $\C[\LL^{-1}]$ indeed satisfies the conditions $(\ref{MIdiffcond})$ and hence $\MI_0  = \C[\LL^{-1}]$.
\end{proof}

\begin{rem}
There are some variants.  We can replace the space $M$ of holomorphic modular forms in the equations $(\ref{MIdiffcond})$ with another  space of modular forms $M'$ to define a  class of functions $\MI(M')$. Some examples:
\begin{enumerate}
  \vspace{0.05in}
\item Replace $M$ with $S$, the space of cusp forms. Since cusp forms do not admit modular primitives, one deduces by induction that   $\MI(S) =  \C [\LL^{-1}]$.

  \vspace{0.05in}
 \item Replace $M$ with 
\begin{equation} \label{Espacedefn} E = \bigoplus_{n\geq 2} \GE_{2n} \Q 
\end{equation} 
the $\Q$-vector space generated by Eisenstein series. 
  We obtain a space 
 $$\MI(E) \quad \subset \quad \MI \ .$$
 In  the sequel to this paper we  construct  a  subspace $\MI^E\otimes \C \subset \MI(E)$ (\S\ref{sectMIE}). We  show that $\MI(E)_k = \MI_k$ for $k=0,1$ but that they differ for $k=2$.
  
\end{enumerate}

\end{rem}
The class of functions $\MI$  has an interesting $\ssl_2$-module structure which could profitably  be reformulated in the language of  
\cite{ClassMaass}.

\subsection{Homological interpretations }  The following remarks can be skipped. Let $\mathcal{M}^{D+k} = \bigoplus_p \mathcal{M}_{p+k, p} $ denote  the subspace of $\mathcal{M}$ upon which $\sfh$ acts by $k$.  It is stable under multiplication by $\LL$.  Write $\mathcal{M}^D= \mathcal{M}^{D+0}$. 
Define an operator   
$$\partial^{D}  f  =    \partial(f) d z   + \overline{\partial}(f) d\overline{z}\ .$$
Since $dz$ and $d \overline{z}$ transform, respectively, like a modular form of weights $(-2,0)$ or $(0,-2)$,  it follows that $\partial^D$  defines a linear map of bidegrees $(-1,-1)$:
$$\partial^D : \mathcal{M}^D \To \mathcal{M}^{D+2} \, dz  \oplus  \mathcal{M}^{D-2} \, d\overline{z}$$
It extends  in the usual manner via the Leibniz rule to a linear map 
$$\partial^D :  \mathcal{M}^{D+2} \, dz  \oplus  \mathcal{M}^{D-2} \, d\overline{z} \To \mathcal{M}^D dz \wedge d\overline{z} \ . $$
It acts by $\partial^D( f dz + g d \overline{z}) =  ( \partial g - \overline{\partial} f    ) dz \wedge d\overline{z}$. 
It follows from the fact that   $[\partial, \overline{\partial}] = h$ vanishes    on $\mathcal{M}^D$, 
that these operators satisfy $(\partial^D)^2=0$.

\begin{defn} Define the diagonal complex to be 
$$0 \To \mathcal{M}^D  \overset{\partial^D}{\To} \mathcal{M}^{D+2} dz \oplus \mathcal{M}^{D-2} d \overline{z} \overset{\partial^D}{\To} \mathcal{M}^D dz \wedge d\overline{z} \To 0\ . $$
Denote its  cohomology groups to be $H^i(\mathcal{M}^D)$ for $i=0,1,2$.   They  inherit a grading in even degrees via the total weight grading on  $\mathcal{M}$, where $dz$ and $d \overline{z}$ have weight $-2$.  
\end{defn} 
It follows from lemma \ref{lemkers} that 
$H^0(\mathcal{M}^D) \cong \bigoplus_p  \Lef^{-p} \C$. In general, $H^i(\mathcal{M}^D)$ is a free graded $\C[\LL^{\pm}]$-module for all $i$.  
For example,  the one-form   of weight zero
$$\omega =   \GE_2^* dz    + \overline{\GE_2^*} d \overline{z}  $$
is closed and by lemma  \ref{lemnoE2prim}  defines a   
non-trivial cohomology class 
$[ \omega ]  
 \in H^1 (\mathcal{M}^D). $
 The obstructions to primitives discussed above can be interpreted in terms of this complex. For example, the 
 proof of theorem $\ref{thmpartialsorthogtocusp}$ can be interpreted as a functional:
 \begin{eqnarray} 
 \gr_{2m} H_{c}^2(\mathcal{M}^D) &\To&  \C \nonumber  \\ 
  \alpha dz \wedge d\overline{z} & \mapsto & \int_{\mathcal{D}} y^{-m}  \alpha dz \wedge d\overline{z}  \ .\nonumber 
 \end{eqnarray} 
 where  $H^2_c$ denotes the subspace of  $H^2$ representable by  forms in $S^{D} dz \wedge d\overline{z}$.   
 The obstructions in $H^1(\mathcal{M}^D)$ are purely combinatorial, by the following lemma:
  \begin{lem} \label{lemFmodular} Let $F \in \C [[q, \overline{q}]][\LL^{\pm}]$. If 
 $$\partial^D F \in \mathcal{M}^{D+2} dz \oplus \mathcal{M}^{D-2} d \overline{z}$$
 is modular, then so is $F$, i.e. $F\in \mathcal{M}^D$.
 \end{lem}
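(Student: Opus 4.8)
\textbf{Proof plan for Lemma \ref{lemFmodular}.}

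The plan is to exploit the fact that modularity is detected by a finite set of generators of $\SL_2(\Z)$, namely $T = \left(\begin{smallmatrix}1&1\\0&1\end{smallmatrix}\right)$ and $S = \left(\begin{smallmatrix}0&-1\\1&0\end{smallmatrix}\right)$, and to reduce the modularity of $F$ to the known modularity of $\partial^D F$ together with Proposition \ref{propModularKernel} (or, more precisely, the kernel computation of Lemma \ref{lemkers}). First I would treat the $T$-invariance: since $F \in \C[[q,\overline{q}]][\LL^{\pm}]$ is by hypothesis a power series in $q,\overline{q}$ with Laurent polynomial coefficients in $\LL$, and $q,\overline{q},\LL$ are all invariant under $z \mapsto z+1$, the function $F$ is automatically $T$-invariant; there is nothing to check here, which is why the combinatorial hypothesis $F \in \C[[q,\overline{q}]][\LL^{\pm}]$ is exactly the right one. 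So the content is entirely the $S$-transformation.

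Next, for a fixed weight datum $(r,s)$ (working componentwise, so assume $F$ is homogeneous of some weights $(r,s)$, which is forced once we know $\partial^D F$ is modular of the appropriate bidegrees) I would consider the ``defect'' $G(z) = z^{-r}\overline{z}^{-s} F(-1/z) - F(z)$, or rather the analogous expression using the cocycle $(cz+d)$; the goal is to show $G \equiv 0$. The key point is Lemma \ref{lemdeltaauto}: since $\partial_r$ and $\overline{\partial}_s$ intertwine the weight-$(r,s)$ slash action with the weight-$(r+1,s-1)$ (resp.\ $(r-1,s+1)$) slash action, and since $\partial^D F$ is \emph{assumed} modular, applying $\partial_r$ and $\overline{\partial}_s$ to $G$ gives zero. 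Thus $G$ lies in $\ker \partial_r \cap \ker\overline{\partial}_s$. Now $\partial^D F$ being genuinely in $\mathcal{M}^{D+2}dz \oplus \mathcal{M}^{D-2}d\overline{z}$ also forces it to have an expansion of the shape \eqref{qexp} (times $dz$, $d\overline{z}$), and integrating back — i.e.\ running the argument of the combinatorial-primitive lemma — shows $G$ itself lies in $\C[[q,\overline{q}]][\LL^{\pm}]$ after multiplying by a suitable power of $\LL$; alternatively one observes directly that $G$ is built from $F(-1/z)$, which is real-analytic, and $G$ has no essential singularity issues because $\partial_r G = \overline{\partial}_s G = 0$ pins it down. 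By Lemma \ref{lemkers}, $\ker\partial_r \cap \ker\overline{\partial}_s$ is $\Q\,\LL^{-r}$ if $r=s$ and $0$ otherwise, so in all cases $G \in \C\,\LL^{-r}\delta_{r,s}$. One then rules out the remaining scalar: evaluating the cocycle condition at the fixed point $z = i$ of $S$ (where $S$ acts with $(cz+d) = i$, so the cocycle factor is $i^{r}\cdot i^{s} = i^{w}$, and $w$ is even), the equation $G(i) = $ (cocycle relation) forces the constant to vanish unless the weight is zero, and when $r=s=0$ one checks using $G(Sz) = -G(z)$-type relations on $\C$ (constants) that the only invariant constant is fixed, hence $G = 0$.

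I expect the main obstacle to be the last step: showing that the residual constant in $\C\,\LL^{-r}$ (in the balanced case $r=s$) actually vanishes, rather than being an arbitrary constant. The clean way around this is probably to use the $S^2 = -\id$ relation together with $S^4 = \id$: iterating the defective transformation law $F \circ S = (\text{cocycle}) \cdot F + G$ twice and using $S^2 = -\id$ (under which $F$ transforms by $(-1)^{w} = 1$) produces a relation forcing $G$ plus its $S$-transform to vanish, and since $G = c\,\LL^{-r}$ with $\LL$ being $S$-\emph{equivariant} of weight $(-1,-1)$, one gets $c(1 + i^{-w}) = 0$ or similar; because $w \equiv 0 \bmod 4$ is \emph{not} guaranteed (only $w$ even), one may instead need the full $\langle S,T\rangle$-presentation or an appeal to the fact that $\LL^{-r}$ is genuinely modular of weights $(r,r)$, so $G = c\LL^{-r}$ is itself modular and can be absorbed, yielding $F - c\LL^{-r} \in \mathcal{M}^D$ and hence $F \in \mathcal{M}^D$ anyway. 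In other words, even if the scalar does not vanish, the conclusion $F \in \mathcal{M}^D$ still holds, because $\LL^{-r} \in \mathcal{M}_{r,r}$; so the subtlety is harmless. I would write the argument so as to make this robustness explicit rather than fighting to kill the constant.
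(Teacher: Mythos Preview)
Your overall strategy matches the paper's: form the modularity defect, use Lemma \ref{lemdeltaauto} to show it is annihilated by both $\partial_r$ and $\overline{\partial}_s$, and invoke Lemma \ref{lemkers} to conclude it lies in $\C\,\LL^{-r}$ (on the diagonal $r=s$, which is the only case since we are in $\mathcal{M}^D$). This is exactly what the paper does.

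The gap is in your last step. Your ``robustness'' claim is wrong: if $F\big|_{\gamma} - F = c_{\gamma}\LL^{-r}$ with $c_{\gamma}\neq 0$, then subtracting any multiple of $\LL^{-r}$ from $F$ does \emph{not} change the defect, precisely because $\LL^{-r}$ is modular. So you cannot ``absorb'' the constant; you must actually prove $c_{\gamma}=0$. Your $S^2$ idea is salvageable but you muddled it: since $\LL^{-r}\in\mathcal{M}_{r,r}$, the weight-$(r,r)$ slash gives $(\LL^{-r})\big|_S=\LL^{-r}$, so iterating $F\big|_S=F+c\,\LL^{-r}$ yields $F\big|_{S^2}=F+2c\,\LL^{-r}$, and $S^2=-\id$ with $(-1)^{2r}=1$ forces $2c=0$. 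There is no factor $i^{-w}$.

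The paper handles this more cleanly by working with all $\gamma$ at once: the map $\gamma\mapsto C_{\gamma}$ is a $1$-cocycle with values in the trivial module $\C$, hence a homomorphism $\SL_2(\Z)\to\C$. Since $\SL_2(\Z)$ has finite abelianisation, any such homomorphism vanishes, so $C_{\gamma}=0$ for every $\gamma$ simultaneously. This avoids the generator-and-relation bookkeeping entirely.
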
 
\begin{proof} Suppose that $\partial F \in \mathcal{M}_{r+1,r-1}$ and $\overline{\partial}F \in \mathcal{M}_{r-1,r+1}$. Then for every $\gamma \in \mathrm{SL}_2(\Z)$,  $$ F(\gamma z) - (cz+d)^r (c\overline{z}+d)^r F(z)
 =C_{\gamma} \LL^{-r}$$
 where $C_{\gamma} \in \C$,  since by  lemma 
\ref{lemdeltaauto}, the left-hand side  lies in  $\ker \partial \cap \ker \overline{\partial}$. It follows that  $\gamma \mapsto C_{\gamma}\in \C$ is a cocycle.  Since $\SL_2(\Z)$ acts trivially on $\C$,   $Z^1 (\SL_2(\Z), \C)= \mathrm{Hom}(\SL_2(\Z);\C)$  vanishes, and therefore $C_{\gamma}=0$, i.e., $F$ is modular of weights $(r,r)$. 
\end{proof}

\section{Real analytic Eisenstein series} \label{sectRAEisenstein}
We consider in some detail the simplest possible family of non-holomorphic functions in $\mathcal{M}$
as a concrete  illustration.

\subsection{Modular primitives of  Eisenstein series}
Eisenstein series, unlike cusp forms, admit  modular primitives in $\mathcal{M}$.
Recall that  the real analytic Eisenstein series are defined for $\mathrm{Re}\, s>1$, $z\in\HH$ by the following function
$$E(z,s) = { 1\over 2} \sum_{m,n \neq (0,0)}  {y^s \over |mz+n|^{2s}}\ .$$

\begin{prop}
 For every $w \geq 1$, there exists a unique set of functions 
 $$\mathcal{E}_{r,s}  \quad \in \quad P^{-w}\,  \mathcal{M}_{r,s}$$
 with  $r,s\geq0 $ and $r+s= w$,  which satisfy the following equations:
\begin{eqnarray} \label{realanalholequation} 
 \partial \, \mathcal{E}_{w,0}   &= & \LL \GE_{w+2}    \\ 
 \partial \,\mathcal{E}_{r,s} -(r+1) \mathcal{E}_{r+1, s-1}  &= &  0   \qquad \qquad  \hbox{ for all  }1\leq s\leq w \  \nonumber 
\end{eqnarray} 
and 
\begin{eqnarray} \label{realanalantiholequation} 
\overline{\partial} \, \mathcal{E}_{0,w}   &= & \LL  \overline{\GE}_{w+2}    \\ 
 \overline{\partial} \,\mathcal{E}_{r,s} - (s+1) \mathcal{E}_{r-1, s+1}   &= &   0  \qquad  \qquad \hbox{ for all  } 1\leq r \leq w \ .  \  \nonumber 
\end{eqnarray} 
These functions can be given explicitly by the following formula
\begin{equation} \label{Eijelementary} 
\mathcal{E}_{r,s} (z)=  { w! \over   ( 2 \pi i )^{w+2}}  {1 \over 2}   \sum_{m,n}   { \LL  \over (mz+n)^{r+1} (m\overline{z}+n)^{s+1} } 
\end{equation} 
where the sum is over all  integers $m,n \in \Z^2 \backslash (0,0)$. 

\end{prop}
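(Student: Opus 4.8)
The plan is to first show that the explicit sums \eqref{Eijelementary} define elements of $\mathcal{M}_{r,s}$, and then to verify that they satisfy the differential equations \eqref{realanalholequation} and \eqref{realanalantiholequation}; uniqueness will follow from Proposition \ref{propModularKernel}. For the modular transformation property, I would substitute $z \mapsto \gamma(z)$ into the sum \eqref{Eijelementary} and use the identities $m\gamma(z)+n = (cz+d)^{-1}(m'z+n')$ and $\LL(\gamma z) = |cz+d|^{-2}\LL(z)$, together with the fact that $(m,n)\mapsto(m,n)\gamma$ permutes $\Z^2\setminus(0,0)$; collecting the factors $(cz+d)^{-(r+1)}(c\overline z+d)^{-(s+1)}\cdot|cz+d|^{2}$ gives exactly the weight $(r,s)$ transformation law. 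Convergence of the sum for $w \geq 1$ needs a brief remark: the summand is $O(\LL\,|mz+n|^{-(w+2)})$ with $w+2 \geq 3$, so the Eisenstein-type sum converges absolutely (and locally uniformly), with the usual caveat near the cusp handled by the factor $\LL$; that $\mathcal{E}_{r,s}$ lies in $P^{-w}\mathcal{M}_{r,s}$, i.e.\ has a $q,\overline q$-expansion with at most a pole of order $w$ in $\LL$, is obtained from the standard Fourier expansion of real analytic Eisenstein series (Poisson summation in $n$), which I would either quote or defer, since only the \emph{shape} of the expansion is needed here.

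The heart of the matter is the differential recursion. Using Definition \ref{partialdefn}, $\partial_r = (z-\overline z)\partial_z + r$, one computes directly on a single term of \eqref{Eijelementary}. Writing $u_{m,n} = \dfrac{\LL}{(mz+n)^{r+1}(m\overline z+n)^{s+1}}$ and recalling $\LL = i\pi(z-\overline z)$, one finds $\partial_z \LL = i\pi$ and $\partial_z (mz+n)^{-(r+1)} = -(r+1)m(mz+n)^{-(r+2)}$, while $(m\overline z+n)^{-(s+1)}$ is annihilated by $\partial_z$. Hence
\begin{align*}
\partial_r u_{m,n} &= (z-\overline z)\Big( \frac{i\pi}{(mz+n)^{r+1}(m\overline z+n)^{s+1}} - \frac{(r+1)m\,\LL}{(mz+n)^{r+2}(m\overline z+n)^{s+1}} \Big) + r\, u_{m,n}.
\end{align*}
The first term equals $\dfrac{\LL}{(mz+n)^{r+1}(m\overline z+n)^{s+1}} = u_{m,n}$ (using $i\pi(z-\overline z)=\LL$), and the bracketed $m$-term is rearranged, via $m(z-\overline z) = (mz+n) - (m\overline z+n)$, into $(r+1)$ times $\dfrac{\LL}{(mz+n)^{r+2}(m\overline z+n)^{s}} - \dfrac{\LL}{(mz+n)^{r+1}(m\overline z+n)^{s+1}}$. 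Summing over $(m,n)$ and matching against the normalising constants $w!/(2\pi i)^{w+2}$, the diagonal terms telescope and one obtains precisely $\partial\,\mathcal{E}_{r,s} = (r+1)\mathcal{E}_{r+1,s-1}$ for $1 \le s \le w$, and for $s=0$ one is left with the ``boundary'' term, which is $\LL$ times the weight-$(w+2)$ holomorphic Eisenstein sum — i.e.\ $\partial\,\mathcal{E}_{w,0} = \LL\,\GE_{w+2}$ after checking the constant is right (this pins down the overall normalisation $w!/(2\pi i)^{w+2}$, and matches \eqref{introEisdefn} up to the standard Lipschitz-formula identity for $\GE_{w+2}$). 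The equations \eqref{realanalantiholequation} follow by applying complex conjugation, which by the remarks after \eqref{introfexpansion} sends $\mathcal{E}_{r,s} \mapsto \mathcal{E}_{s,r}$ and intertwines $\partial$ with $\overline\partial$.

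Finally, uniqueness: if $\mathcal{E}'_{r,s}$ is another solution, then $\delta_{r,s} := \mathcal{E}_{r,s} - \mathcal{E}'_{r,s} \in \mathcal{M}_{r,s}$ satisfies $\partial\,\delta_{w,0} = 0$, so by Proposition \ref{propModularKernel} (the case $r>s$, i.e.\ ``below the diagonal'' — here $r=w>0=s$) we get $\delta_{w,0}=0$; then descending through $s=1,2,\dots$ via $\partial\,\delta_{r,s} = (r+1)\delta_{r+1,s-1} = 0$ and the same proposition, each $\delta_{r,s}$ with $r>s$ vanishes, and the middle term $\delta_{w/2,w/2}$ (when $w$ is even) lies in $\ker\partial\cap\mathcal{M}_{w/2,w/2} = \C\,\LL^{-w/2}$ but also, comparing $q$-expansions or using $P^{-w}$ together with the $\overline\partial$-equation, is forced to vanish; alternatively one invokes \eqref{realanalantiholequation} symmetrically. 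I expect the main obstacle to be bookkeeping the normalising constants so that the telescoping recursion closes up \emph{and} the $s=0$ equation produces exactly $\LL\,\GE_{w+2}$ with coefficient $1$; the combinatorial identity $m(z-\overline z) = (mz+n)-(m\overline z+n)$ is the engine, but keeping track of factors of $2\pi i$, $w!$, and the Bernoulli-number normalisation of $\GE_{w+2}$ requires care. Everything else is the routine verification that the termwise manipulations are justified by absolute convergence.
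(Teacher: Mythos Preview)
Your proposal is correct and follows essentially the same route as the paper: the termwise identity you derive, $\partial_r\big(\LL/((mz+n)^{r+1}(m\overline z+n)^{s+1})\big) = (r+1)\,\LL/((mz+n)^{r+2}(m\overline z+n)^{s})$, is exactly the identity the paper displays and uses, and both arguments recover $\LL\,\GE_{w+2}$ at the boundary $s=0$ from the sum definition of the holomorphic Eisenstein series. The only cosmetic difference is that for the $q,\overline q$-expansion the paper reduces to the diagonal case $\mathcal E_{m,m}$, identifies it with the classical $E(z,m+1)$, quotes its known Fourier expansion, and then propagates via $\partial,\overline\partial$, whereas you propose Poisson summation directly; these are the same computation in different packaging. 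Your uniqueness argument is also the paper's (Proposition~\ref{propModularKernel}), just spelled out more fully; at the diagonal step the clean way to finish is the one you flag as ``alternatively'': run the $\overline\partial$-recursion from the other end to get $\delta_{w/2-1,w/2+1}=0$, and then $\partial\delta_{w/2-1,w/2+1}=(w/2)\,\delta_{w/2,w/2}$ forces the residual constant multiple of $\LL^{-w/2}$ to vanish.
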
 
\begin{proof}  The uniqueness follows from proposition \ref{propModularKernel}.  For the existence, 
formula $(\ref{Eijelementary})$  converges and defines a modular function of weights $r,s$.
We must verify $(\ref{realanalholequation})$ and $(\ref{realanalantiholequation})$. These follow  from  
the  following  identity, which, holds for any integers $r,s$:
$$\partial_r \Big( {z- \overline{z}  \over (mz+n)^{r+1} (m\overline{z}+n)^{s+1}}\Big) = (r+1)  \Big(   {z- \overline{z}  \over (mz+n)^{r+2} (m\overline{z}+n)^{s}}  \Big) \ .$$
By taking the complex conjugate, we deduce a similar formula for $\overline{\partial}$ on interchanging $r$ and $s$. 
It follows from the definition of the holomorphic Eisenstein series  as a sum:
$$ \GE_{w+2}  = {(w+1)!  \over (2\pi i)^{w+2}} {1 \over 2}   \sum_{(m,n)\neq (0,0)} {1 \over (mz+n)^{2w+2}}$$
 that $\mathcal{E}_{w,0}$ satisfies the first equation of 
 $(\ref{realanalholequation})$. The first equation of $(\ref{realanalantiholequation})$ follows by conjugating. 
It remains to verify the expansion $(\ref{qexp})$.  For this, note that   
 for all $m\geq 1$ the definition $(\ref{Eijelementary})$ implies the identity
\begin{equation} \mathcal{E}_{m,m}  =  { i \over ( 2 i \pi)^{2m+1}} { (2m)! \over y^{m} }  E(z,m+1)\ .
\end{equation}
The  expansion of the right-hand side is well-known and lies in $\C[[q, \overline{q}]][\LL^{\pm}]$. The expansions of the   functions $\mathcal{E}_{r,s}$ are deduced from $\mathcal{E}_{m,m}$ by applying $\partial, \overline{\partial}$.
 \end{proof}

We immediately deduce the following properties: 
\begin{cor}  \label{corEijbasic} For all $r+s= w>0$,
the functions $\mathcal{E}_{r,s}$ satisfy $\overline{\mathcal{E}}_{r,s}  =    \mathcal{E}_{s,r}$, 
\begin{eqnarray} 
\Delta \mathcal{E}_{r,s} & = & -  w \, \mathcal{E}_{r,s} \ , \nonumber \\
 p( \mathcal{E}_{r,s} ) & = & 0  \ , \nonumber 
\end{eqnarray} 
where $p= p^{h}+p^{a}$ denotes the holomorphic and antiholomorphic projections. 
\end{cor}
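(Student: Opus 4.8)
The plan is to obtain all three assertions directly from the characterising equations $(\ref{realanalholequation})$ and $(\ref{realanalantiholequation})$ of the preceding proposition together with the obstruction results already proved, with no further work on the lattice sum $(\ref{Eijelementary})$. For the conjugation symmetry $\overline{\mathcal{E}}_{r,s}=\mathcal{E}_{s,r}$ I would argue by uniqueness: complex conjugation interchanges $\partial$ with $\overline{\partial}$, fixes $\LL$, and carries $\GE_{w+2}$ to $\overline{\GE}_{w+2}$, so the family $\{\,\overline{\mathcal{E}_{r,s}}\,\}_{r+s=w}$, reindexed by $(r,s)\mapsto(s,r)$, solves exactly the same system $(\ref{realanalholequation})$--$(\ref{realanalantiholequation})$ that pins down $\{\mathcal{E}_{r,s}\}_{r+s=w}$; by the uniqueness clause of the proposition (which rests on Proposition \ref{propModularKernel}) the two families coincide. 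One can also read the identity straight off $(\ref{Eijelementary})$, using that $\LL=-2\pi y$ is real and that $\overline{(2\pi i)^{w+2}}=(2\pi i)^{w+2}$ since $w$ is even.

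For the Laplace eigenvalue equation I would use the two shapes of $\Delta_{r,s}$ in $(\ref{LaplaceDef})$ and split on whether $s=0$. If $s\geq 1$, then $\partial\mathcal{E}_{r,s}=(r+1)\mathcal{E}_{r+1,s-1}$ by $(\ref{realanalholequation})$, and, since $1\leq r+1\leq w$, relation $(\ref{realanalantiholequation})$ gives $\overline{\partial}\mathcal{E}_{r+1,s-1}=s\,\mathcal{E}_{r,s}$, whence $\Delta_{r,s}\mathcal{E}_{r,s}=\bigl(-(r+1)s+r(s-1)\bigr)\mathcal{E}_{r,s}=-(r+s)\,\mathcal{E}_{r,s}=-w\,\mathcal{E}_{r,s}$. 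If $s=0$ then $r=w\geq 1$; using the second expression $\Delta_{w,0}=-\partial_{w-1}\overline{\partial}_0$ together with $\overline{\partial}\mathcal{E}_{w,0}=\mathcal{E}_{w-1,1}$ and $\partial\mathcal{E}_{w-1,1}=w\,\mathcal{E}_{w,0}$ again gives $\Delta_{w,0}\mathcal{E}_{w,0}=-w\,\mathcal{E}_{w,0}$. Alternatively, one may verify only the base case $\Delta\mathcal{E}_{w,0}=-w\,\mathcal{E}_{w,0}$ and propagate it through all $(r,s)$ using $[\partial,\Delta]=[\overline{\partial},\Delta]=0$ from Lemma \ref{lemoperatoridentities}, or apply $\Delta_{r,s}$ termwise to $(\ref{Eijelementary})$ via the differentiation identity recorded in the proof of the proposition and its complex conjugate.

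For the vanishing of the projections, I would note that the recursions exhibit $\mathcal{E}_{r,s}=\tfrac1r\,\partial\mathcal{E}_{r-1,s+1}$ whenever $r\geq 1$, so $\mathcal{E}_{r,s}$ lies in the image of $\partial$ and Theorem \ref{thmpartialsorthogtocusp} forces $p^h(\mathcal{E}_{r,s})=0$; symmetrically $\mathcal{E}_{r,s}=\tfrac1s\,\overline{\partial}\mathcal{E}_{r+1,s-1}$ for $s\geq 1$ lies in the image of $\overline{\partial}$, so the complex-conjugate form of that theorem gives $p^a(\mathcal{E}_{r,s})=0$. The only remaining cases are $r=0$, where $s=w>0$ and $S_{r-s}=S_{-w}=0$ so that $p^h$ vanishes trivially, and $s=0$, where likewise $\overline{S}_{s-r}=\overline{S}_{-w}=0$. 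Since $w>0$ forces $r\geq 1$ or $s\geq 1$, in every case both components vanish and $p(\mathcal{E}_{r,s})=p^h(\mathcal{E}_{r,s})+p^a(\mathcal{E}_{r,s})=0$.

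None of this is hard, so there is no serious obstacle; the only points deserving care are the bookkeeping ones. First, one must check that the index ranges $1\leq s\leq w$ and $1\leq r\leq w$ in $(\ref{realanalholequation})$--$(\ref{realanalantiholequation})$ really do cover the shifted arguments used above, so that the edge functions $\mathcal{E}_{w,0}$ and $\mathcal{E}_{0,w}$ are handled correctly and one never silently invokes a nonexistent relation. Second, if one prefers the series proof of the eigenvalue equation, one must justify applying $\Delta_{r,s}$ to $(\ref{Eijelementary})$ term by term, which is legitimate because the sum and its first and second derivatives converge locally uniformly on $\HH$ when $w>0$.
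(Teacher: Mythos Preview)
Your proof is correct and follows essentially the same approach as the paper: uniqueness for the conjugation symmetry, the definition $(\ref{LaplaceDef})$ combined with $(\ref{realanalholequation})$--$(\ref{realanalantiholequation})$ for the Laplace eigenvalue, and Theorem \ref{thmpartialsorthogtocusp} for the vanishing of the projections. Your write-up is in fact more careful than the paper's one-line sketch, in that you explicitly track the edge cases $r=0$ and $s=0$ where one of $p^h$, $p^a$ is trivially zero because the target space of cusp forms vanishes.
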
 
\begin{proof} The compatibility with complex conjugation follows by symmetry of $(\ref{realanalholequation})$ and $(\ref{realanalantiholequation})$ and uniqueness. 
The Laplace equation  follows from $(\ref{LaplaceDef})$, $(\ref{realanalholequation})$ and $(\ref{realanalantiholequation})$. 
The last equation follows from theorem $\ref{thmpartialsorthogtocusp} $ since  $\mathcal{E}_{r,s}$ is in the image of either $\partial$ or $\overline{\partial}$. 
\end{proof}

\begin{prop} The constant part of $\mathcal{E}_{r,s}$ is given by 
\begin{equation}  \label{constantpartofRealEis}
\mathcal{E}_{r,s}^0 =  {  - B_{w+2} \over  2(w+1)(w+2)} \LL      +      {(-1)^s \over 2} {w! \over 2^w}\binom{w}{r}     \zeta(w+1)  \LL^{-w}   \end{equation} 
where   $w= r+s >0$ is even. Furthermore,  $\mathcal{E} - \mathcal{E}_{r,s}^0$ has rational coefficients. 
\end{prop}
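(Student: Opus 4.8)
The plan is to compute the constant term (i.e.\ the $q^0\overline{q}^0$-coefficient) of the explicit lattice sum $(\ref{Eijelementary})$ directly. First I would separate the sum over $(m,n)\in\Z^2\setminus(0,0)$ into the terms with $m=0$ and the terms with $m\neq 0$. The $m=0$ terms give
\[
{w!\over (2\pi i)^{w+2}}\,{1\over 2}\sum_{n\neq 0}{\LL\over n^{r+1}n^{s+1}} = {w!\over (2\pi i)^{w+2}}\,\LL\,\zeta(w+2)
\]
(using $w+2$ even), and rewriting $\zeta(w+2)$ via the standard formula $\zeta(w+2)=(-1)^{w/2+1}B_{w+2}(2\pi)^{w+2}/\bigl(2(w+2)!\bigr)$ together with $(2\pi i)^{w+2}=(-1)^{w/2+1}(2\pi)^{w+2}$ yields exactly the first term $-B_{w+2}\LL/\bigl(2(w+1)(w+2)\bigr)$, and shows it is the $\LL^1$-part of the constant part. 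For the $m\neq 0$ terms I would expand $(mz+n)^{-(r+1)}(m\overline z+n)^{-(s+1)}$ using the Lipschitz/Fourier-expansion formula (i.e.\ Poisson summation in $n$): $\sum_{n\in\Z}(z+n)^{-a}(\overline z+n)^{-b}$ has a known expansion whose $q^0\overline q^0$-coefficient is a beta-type integral, namely proportional to $y^{1-a-b}\,\Gamma(a+b-1)/\bigl(\Gamma(a)\Gamma(b)\bigr)$. Collecting this over $m\geq 1$ (and $m\leq -1$, contributing a factor $1+(-1)^{r+s}=2$, and $(-1)^{s+1}$ from the sign on the $\overline z$-factor) produces a factor $\sum_{m\geq 1}m^{-(w+1)}=\zeta(w+1)$ and a power $\LL^{1-(w+2)+1}=\LL^{-w}$, and assembling all the $\Gamma$-factors and powers of $2\pi i$ gives the second term ${(-1)^s\over 2}{w!\over 2^w}\binom{w}{r}\zeta(w+1)\LL^{-w}$.

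An alternative, possibly cleaner, route avoids direct manipulation of $(\ref{Eijelementary})$: compute $\mathcal{E}_{m,m}^0$ from the classical Fourier expansion of the real analytic Eisenstein series $E(z,s)$ (whose constant term is $y^s+\xi(2s-1)/\xi(2s)\,y^{1-s}$ with $\xi$ the completed zeta function), substitute into the identity $\mathcal{E}_{m,m}=i(2i\pi)^{-(2m+1)}(2m)!\,y^{-m}E(z,m+1)$ recorded in the proof of the proposition, and then propagate to general $(r,s)$ using $(\ref{realanalholequation})$ and $(\ref{realanalantiholequation})$. Since $\partial,\overline\partial$ act on the constant part by the explicit rule of Lemma~\ref{lempartialsqexp} (in which the $q=\overline q=0$ specialisation of $(\ref{partialactformula})$ reads $\partial_r\LL^k=(r+k)\LL^k$), the relations $\partial\,\mathcal{E}_{r,s}=(r+1)\mathcal{E}_{r+1,s-1}$ become a simple recursion on the scalar coefficients of $\LL^1$ and $\LL^{-w}$ in $\mathcal{E}_{r,s}^0$; one checks the $\LL^1$-coefficient is $\partial$-invariant in the right normalisation (since $(r+k)$ with $k=1$ just cancels against the $(r+1)$), and the $\LL^{-w}$-coefficient transforms by a factor that produces the binomial $\binom{w}{r}$ and the sign $(-1)^s$ after starting from the symmetric value at $r=s=w/2$. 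This reduces the whole computation to verifying the two claimed values at the single base point $r=s$, which is the content of the classical Fourier expansion of $E(z,m+1)$.

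Finally, for the rationality statement I would argue as follows: set $\mathcal{E}_{r,s}'=\mathcal{E}_{r,s}-\mathcal{E}_{r,s}^0\in\mathcal{S}_{r,s}$. The relations $(\ref{realanalholequation})$, $(\ref{realanalantiholequation})$ are defined over $\Q$ (the operators $\partial,\overline\partial$ are defined over $\Q$ by Lemma~\ref{lempartialsqexp}, and $\GE_{w+2}$, $\overline{\GE}_{w+2}$ have rational coefficients), and subtracting the constant parts — which are $\partial$- and $\overline\partial$-``constant'' only up to the explicit scalar action above — one sees that the $\mathcal{E}_{r,s}'$ satisfy the same system of equations but now with \emph{rational} right-hand sides and with all constant parts equal to zero. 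By the combinatorial primitive construction (the lemma following $(\ref{poleassumption})$, applied degree by degree in $\LL$ and order by order in $q,\overline q$), a solution in $\C[[q,\overline q]][\LL^{\pm}]$ with vanishing constant part and rational inhomogeneous term is forced to have rational coefficients, since at each step one solves a linear equation $a_{m,n}^{(k)}=2m\,b_{m,n}^{(k-1)}+(r+k)b_{m,n}^{(k)}$ with rational data over $\Q$; uniqueness of the $q$-expansion then identifies this rational solution with $\mathcal{E}_{r,s}'$. I expect the main obstacle to be the first approach's bookkeeping: correctly tracking the powers of $2\pi i$, the $\Gamma$-factor $\Gamma(w+1)/\bigl(\Gamma(r+1)\Gamma(s+1)\bigr)=\binom{w}{r}/w!\cdot w!$ versus the $w!/2^w$ prefactor, and the various signs $(-1)^s$ coming from the $(m\overline z+n)$-factors and from $m\mapsto -m$; the second approach sidesteps this by isolating the arithmetic into the single known constant term of $E(z,s)$ and making everything else a clean rational recursion.
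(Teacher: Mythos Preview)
Your proposal is correct, and your second route is exactly the paper's argument: reduce to the diagonal case $r=s$ via the classical Fourier expansion of $E(z,m+1)$, then propagate to general $(r,s)$ using the recursion $\partial\mathcal{E}_{r,s}=(r+1)\mathcal{E}_{r+1,s-1}$ together with complex conjugation $\overline{\mathcal{E}}_{r,s}=\mathcal{E}_{s,r}$. (Your verification that $\partial_r\LL^k=(r+k)\LL^k$ makes the $\LL^1$-coefficient constant along the recursion and turns the $\LL^{-w}$-coefficient into $(-1)^s\binom{w}{r}$ is exactly the missing computation the paper leaves implicit.) The paper also mentions, as a second alternative, computing the constant part from the Eisenstein cocycle of \S\ref{sectEisasint}, which you do not use.

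Your first route---splitting the lattice sum $(\ref{Eijelementary})$ into $m=0$ and $m\neq 0$ and applying the Lipschitz formula to the latter---is a genuinely different and more self-contained approach than the paper's. It has the advantage of not relying on the classical Fourier expansion of $E(z,s)$ as a black box, at the cost (which you correctly flag) of more delicate bookkeeping with signs and $\Gamma$-factors. Your rationality argument is also more explicit than the paper's, which simply asserts the result; your observation that the $\mathcal{E}_{r,s}'=\mathcal{E}_{r,s}-\mathcal{E}_{r,s}^0$ satisfy the \emph{same} recursion $(\ref{realanalholequation})$, $(\ref{realanalantiholequation})$ over $\Q$ with vanishing constant part, and that the resulting combinatorial primitives are then forced to be rational, is the right way to make this precise.
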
 
\begin{proof}  The statement is well-known for $r=s=w$, since it reduces to the Fourier expansion of the real analytic Eisenstein series $E(z,w+1)$. 
The remaining cases are deduced by applying $\partial$  via $(\ref{realanalholequation})$ and by $\mathcal{E}_{r,s} = \overline{\mathcal{E}}_{s,r}$. 
An alternative way to prove this theorem is to use the expression for $\mathcal{E}_{r,s}$ as the real part of the single iterated integral of holomorphic
Eisenstein series \cite{MMV} \S8, and use the computation of the cocycle of the latter \cite{MMV}, lemma 7.1, to write down the constant terms directly. See \S \ref{sectEisasint}. 
\end{proof} 

\subsection{Explicit formulae}
For all $w\geq 1$ write
$$g_{2w+2}^{(k)}(q)=  (-1)^k k! \sum_{n\geq 1} { \sigma_{2w+1}(n) \over (2n)^{k+1}}  q^n \ .$$
Then for any $a+b=2w$, define 
\begin{equation} 
R_{a,b}= (-1)^{a} \binom{2w}{a} \sum_{k=b}^{a+b}  \binom{a}{k-b}    { g_{2w+2}^{(k)}(q)  \over \LL^k} \ .
\end{equation}
Then the real analytic Eisenstein series are given explicitly by
$$\mathcal{E}_{a,b} =  \mathcal{E}^0_{a,b}+   R_{a,b}   + \overline{R}_{b,a}\ ,$$
where $ \mathcal{E}^0_{a,b}$ is $(\ref{constantpartofRealEis})$.
This formula  in the case $a=b$ is equivalent to  the known Fourier expansion  of the real analytic Eisenstein 
series. One can verify  the other cases by checking that they satisfy the differential equations   $(\ref{realanalholequation})$ and $(\ref{realanalantiholequation})$. See \cite{Mock} for details.

 \subsection{Description of $\MI_1$} We already showed that $\MI_0 = \C[\LL^{-1}]$.

\begin{cor}   In length one, 
$$\MI_1  =   \MI_0 \otimes_{\C} \bigoplus_{r,s\geq 0 , r+s \geq 2 } \C \,\mathcal{E}_{r,s} $$
\end{cor}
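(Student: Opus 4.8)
The plan is to characterise $\MI_1$ by the same inductive bootstrap that defined the class $\MI$ and was already used to compute $\MI_0$. By definition, $F \in \MI_1$ means $F \in \bigoplus_{r,s \geq 0} P^{-r-s}\mathcal{M}_{r,s}$ and
$$\partial F \in \MI_1 + M[\LL]\times \MI_0, \qquad \overline{\partial}F \in \MI_1 + \overline{M}[\LL]\times\MI_0.$$
Since $\MI_0 = \C[\LL^{-1}]$, the right-hand sides are $\MI_1 + M[\LL^{\pm}]$ and $\MI_1 + \overline{M}[\LL^{\pm}]$. First I would reduce to the edge of the quadrant: for $F\in\MI_1$ of weights $(n,0)$ with $n\geq 0$, equation (\ref{partialFonedge}) forces $\partial F \in M[\LL^{\pm}]$, and since $\partial F$ has weights $(n+1,-1)$ it must be a combination of $\LL^{k}\GE_{n+2}$'s (the only holomorphic modular forms of weight $n+2$ contributing, together with powers of $\LL$). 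Using (\ref{partialLk}) and the combinatorial/Petersson obstructions (Lemma on $\partial_r$ and Theorem \ref{thmpartialsorthogtocusp}), cusp-form contributions and illegal $\LL$-powers are ruled out, so $\partial F$ is a multiple of $\LL\,\GE_{n+2}$; by uniqueness (Proposition \ref{propModularKernel}) $F$ is then a scalar multiple of $\mathcal{E}_{n,0}$ modulo $\ker\partial = \C\LL^{-n}$ (present only when $n = 0$, accounting for the $\MI_0$ factor).

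Next I would propagate into the interior $r > s \geq 0$ exactly as described after the definition of $\MI$: for $F\in\MI_1$ of weights $(r,s)$, the first equation of (\ref{MIdiffcond}) gives $\partial F \in \MI_1 + M[\LL^{\pm}]$ of weights $(r+1,s-1)$; by induction on $s$ the $\MI_1$-part is a combination of the already-determined $\mathcal{E}_{r+1,s-1}$ (times $\C[\LL^{-1}]$), and the $M[\LL^{\pm}]$-part must vanish because $\mathcal{E}_{r+1,s-1}$ has zero holomorphic projection while any nonzero element of $M[\LL^{\pm}]$ of that weight has a nonzero Eisenstein or cuspidal component detected by Theorem \ref{thmpartialsorthogtocusp} or by the pole-filtration/constant-term obstruction. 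Hence $\partial F$ equals $(r+1)\mathcal{E}_{r+1,s-1}$ times a scalar (in $\C[\LL^{-1}]$), and Proposition \ref{propModularKernel} again pins down $F$ as that scalar times $\mathcal{E}_{r,s}$ — the defining relation (\ref{realanalholequation}) of the real analytic Eisenstein series is precisely what makes this consistent. The region $r < s$ follows by complex conjugation, using $\overline{\mathcal{E}}_{r,s} = \mathcal{E}_{s,r}$ and closure of $\MI$ under conjugation. Conversely, one checks that $\MI_0 \otimes \bigoplus \C\,\mathcal{E}_{r,s}$ satisfies (\ref{MIdiffcond}) with $k=1$: this is immediate from (\ref{realanalholequation}), (\ref{realanalantiholequation}), $\partial\LL = \overline{\partial}\LL = 0$, and $\partial\mathcal{E}_{w,0} = \LL\GE_{w+2} \in M[\LL]\times\MI_0$; by maximality of $\MI_1$ the two spaces coincide.

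The main obstacle I anticipate is the bookkeeping in the interior induction step: showing that the ``$M[\LL^{\pm}]$-error term'' in $\partial F$ is actually forced to be zero rather than merely constrained. The clean way is to first subtract off the $\MI_1$-component (a known linear combination of $\mathcal{E}_{r+1,s-1}$ with $\C[\LL^{-1}]$-coefficients), reducing to the claim that an element $G \in P^{\bullet}\mathcal{M}_{r,s}$ with $\partial G \in M[\LL^{\pm}]$ and $\partial G$ orthogonal to all holomorphic cusp forms (Theorem \ref{thmpartialsorthogtocusp}) and with vanishing $a^{(-r)}_{0,n}$-coefficients (the combinatorial obstruction, Lemma \ref{lemvanishingcond}) must have $\partial G$ a multiple of $\LL\GE_{r+s+2}$ — and then invoke the corollary giving a solution $G = c\,\mathcal{E}_{r,s} + (\text{element of }\C\LL^{-r})$. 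One must be slightly careful that $\partial F$ lands in weights $(r+1, s-1)$ with $s - 1 \geq 0$ so that $\mathcal{E}_{r+1,s-1}$ is defined; the case $s = 0$ is the already-handled edge case, so the induction genuinely starts there. I would also double-check the pole orders: $\mathcal{E}_{r,s} \in P^{-w}\mathcal{M}_{r,s}$ as stated in the Proposition, which is exactly $P^{-r-s}$, so membership in the ambient space $\bigoplus P^{-r-s}\mathcal{M}_{r,s}$ is automatic and imposes no extra constraint.
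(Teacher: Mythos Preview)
Your approach matches the paper's: handle the edge $(n,0)$ first via Theorem~\ref{thmpartialsorthogtocusp} and Proposition~\ref{propModularKernel}, then propagate into the interior by induction on $s$, treat $r<s$ by conjugation, and verify the reverse inclusion from $(\ref{realanalholequation})$--$(\ref{realanalantiholequation})$. The paper compresses the entire interior step to ``by a similar argument'', so your explicit bookkeeping is more detailed than what is actually written there.

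One small correction to that bookkeeping: in the interior step the Eisenstein part of the $M[\LL^{\pm}]$-error, namely $c\,\LL^{1-s}\GE_{r-s+2}$, is \emph{not} forced to vanish by any obstruction (Lemma~\ref{lemvanishingcond} does not bite since $1-s\neq -r$ when $r>s$). Rather, it already has the $\partial$-primitive $c\,\LL^{-s}\mathcal{E}_{r-s,0}$, which lies in the target $\C[\LL^{-1}]$-span of the $\mathcal{E}_{a,b}$; so it is absorbed, not eliminated. After subtracting this and the primitives $\tfrac{c_j}{r+1+j}\LL^{-j}\mathcal{E}_{r+j,s+j}$ of the inductively known $\MI_1$-part, only a cuspidal remainder can survive, and Corollary~\ref{cornocuspprim} then kills it. Your conclusion is unaffected.
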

\begin{proof}
Let $F\in \MI_{1}$ of weights $(n,0)$. By $(\ref{partialFonedge})$ it satisfies 
$\partial F   \in     M \LL$. 
 Since $\partial F$ is orthogonal to cusp forms by theorem $\ref{thmpartialsorthogtocusp}$, 
 it must satisfy $\partial F \in \C \GE_{n+2} \LL$.  This equation has the unique family of solutions  $F \in \C \mathcal{E}_{n,0}$. 
 By equation $(\ref{MIdiffcond})$, the elements $F\in \MI_{1}$ of weights $(r,s)$ with $r>s$ are iterated primitives of real analytic Eisenstein series and modular forms $M[\LL]$, and hence also real analytic Eisenstein series, by a similar argument.
 We conclude that  $\MI_1$ is contained in the $\C[\LL^{-1}]$-module generated by the $\mathcal{E}_{r,s}$. Since the latter satisfy
 $(\ref{MIdiffcond})$, this proves  equality.
 \end{proof}

\subsection{Picture of the real analytic Eisenstein series}
Based on the previous picture of $\mathcal{M}$, the real analytic Eisenstein series can be viewed as follows:
\\

\begin{center}
\fcolorbox{white}{white}{
  \begin{picture}(290,239) (36,-17)
    \SetWidth{1.0}
    \SetColor{Black}
    \Line[arrow,arrowpos=0,arrowlength=5,arrowwidth=2,arrowinset=0.2,flip](104.308,204.994)(104.308,31.147)
    \Line[arrow,arrowpos=1,arrowlength=5,arrowwidth=2,arrowinset=0.2](104.308,31.147)(278.154,31.147)
   \SetWidth{2.0}
    \Vertex(104.308,31.147){3}
    \Vertex(139.077,65.917){3}
    \Vertex(104.308,100.686){3}
    \Vertex(104.308,170.224){3}
    \Vertex(139.077,135.455){3}
    \Vertex(173.846,31.147){3}
    \Vertex(173.846,100.686){3}
    \Vertex(208.615,65.917){3}
    \Vertex(243.385,31.147){3}
    \Vertex(208.615,135.455){2.049}
      \SetWidth{1.0}
    \SetWidth{0.5}
      \SetColor{Blue}
    \Arc[dash,dashsize=2.173,arrow,arrowpos=0.5,arrowlength=2.167,arrowwidth=0.9083,arrowinset=0.2,clock](203.907,235.054)(118.84,-123.06,-146.94)
    \Arc[dash,dashsize=2.173,arrow,arrowpos=0.5,arrowlength=2.167,arrowwidth=0.9083,arrowinset=0.2,clock](40.057,71.204)(118.038,57.022,32.978)
    \Arc[dash,dashsize=2.173,arrow,arrowpos=0.5,arrowlength=2.167,arrowwidth=0.9083,arrowinset=0.2,clock](238.676,200.285)(118.84,-123.06,-146.94)
    \Arc[dash,dashsize=2.173,arrow,arrowpos=0.5,arrowlength=2.167,arrowwidth=0.9083,arrowinset=0.2,clock](74.826,36.435)(118.038,57.022,32.978)
    \Arc[dash,dashsize=2.173,arrow,arrowpos=0.5,arrowlength=2.167,arrowwidth=0.9083,arrowinset=0.2,clock](273.445,165.516)(118.84,-123.06,-146.94)
    \Arc[dash,dashsize=2.173,arrow,arrowpos=0.5,arrowlength=2.167,arrowwidth=0.9083,arrowinset=0.2,clock](308.215,130.747)(118.84,-123.06,-146.94)
    \Arc[dash,dashsize=2.173,arrow,arrowpos=0.5,arrowlength=2.167,arrowwidth=0.9083,arrowinset=0.2,clock](238.676,130.747)(118.84,-123.06,-146.94)
    \Arc[dash,dashsize=2.173,arrow,arrowpos=0.5,arrowlength=2.167,arrowwidth=0.9083,arrowinset=0.2,clock](203.907,165.516)(118.84,-123.06,-146.94)
    \Arc[dash,dashsize=2.173,arrow,arrowpos=0.5,arrowlength=2.167,arrowwidth=0.9083,arrowinset=0.2,clock](40.057,1.666)(118.038,57.022,32.978)
    \Arc[dash,dashsize=2.173,arrow,arrowpos=0.5,arrowlength=2.167,arrowwidth=0.9083,arrowinset=0.2,clock](109.595,1.666)(118.038,57.022,32.978)
    \Arc[dash,dashsize=2.173,arrow,arrowpos=0.5,arrowlength=2.167,arrowwidth=0.9083,arrowinset=0.2,clock](144.365,-33.103)(118.038,57.022,32.978)
    \Arc[dash,dashsize=2.173,arrow,arrowpos=0.5,arrowlength=2.167,arrowwidth=0.9083,arrowinset=0.2,clock](74.826,-33.103)(118.038,57.022,32.978)
    \Line(86.923,13.763)(260.769,187.609)
    \SetWidth{0.5}
    \Vertex(208.615,-3.622){2.049}
    \Vertex(278.154,-3.622){2.049}
    \Vertex(69.538,135.455){2.049}
    \Vertex(69.538,204.994){2.049}
    \Text(42,137)[lb]{\Large{\Red{$\LL \overline{\GE}_4$}}}
    \Text(42,205)[lb]{\Large{\Red{$\LL \overline{\GE}_6$}}}
    \Text(200,-21)[lb]{\Large{\Red{$\LL \GE_4$}}}
    \Text(269.461,-21)[lb]{\Large{\Red{$\LL \GE_6$}}}
    \Text(110.827,173)[lb]{\Large{\Red{$\mathcal{E}_{0,4}$}}}
    \Text(143.423,140)[lb]{\Large{\Red{$\mathcal{E}_{1,3}$}}}
    \Text(182.538,98)[lb]{\Large{\Red{$\mathcal{E}_{2,2}$}}}
      \Text(215,130)[lb]{\Large{\Red{$\mathcal{E}_{3,3}$}}}
    \Text(217.308,66)[lb]{\Large{\Red{$\mathcal{E}_{3,1}$}}}
    \Text(249.904,36)[lb]{\Large{\Red{$\mathcal{E}_{4,0}$}}}   
    \Text(110.827,102)[lb]{\Large{\Red{$\mathcal{E}_{0,2}$}}}
    \Text(147,62)[lb]{\Large{\Red{$\mathcal{E}_{1,1}$}}}
    \Text(175,34)[lb]{\Large{\Red{$\mathcal{E}_{2,0}$}}}
    \Text(75,30)[lb]{\Large{\Black{$\mathcal{M}_{0,0}$}}}
    \Text(290,30)[lb]{\Large{\Black{$r$}}}
    \Text(95,210)[lb]{\Large{\Black{$s$}}}
    \SetWidth{0.5}
    \Line[dash,dashsize=2.173,arrow,arrowpos=0.5,arrowlength=2.167,arrowwidth=1,arrowinset=0.2](104.308,170.224)(69.538,204.994)
    \Line[dash,dashsize=2.173,arrow,arrowpos=0.5,arrowlength=2.167,arrowwidth=1,arrowinset=0.2](104.308,100.686)(69.538,135.455)
    \Line[dash,dashsize=2.173,arrow,arrowpos=0.5,arrowlength=2.167,arrowwidth=1,arrowinset=0.2,flip](208.615,-3.622)(173.846,31.147)
    \Line[dash,dashsize=2.173,arrow,arrowpos=0.5,arrowlength=2.167,arrowwidth=1,arrowinset=0.2,flip](278.154,-3.622)(243.385,31.147)
     \Text(264,180)[lb]{\Large{\Black{$r=s$}}}
  \end{picture}
}
\end{center}
\vspace{0.1in}
The dashed arrows going up and down the anti-diagonals are $\partial$ and $\overline{\partial}$. 
The classical real analytic Eisenstein series are the functions $\mathcal{E}_{n,n}$ lying along the diagonal $r=s$.

\section{Eigenfunctions of the Laplacian} 

The results of this section are not  needed for the rest of the paper. We show that the space $\mathcal{M}$   has   very limited  overlap  with  the  theory of Maass waveforms \cite{Maass}, and determine to what extent the solutions to a Laplace eigenvalue equation are not unique.

Call  $F\in \mathcal{M}$  an eigenfunction of $\Delta$ if there exists $\lambda \in \C$, the eigenvalue, such that  $\Delta  F = \lambda F$. It decomposes into a sum of terms $F_{r,s}  \in \mathcal{M}_{r,s}$  satisfying $\Delta_{r,s} F = \lambda F$.  

\begin{thm} \label{thmLaplaceEigen} Let $F$ be an eigenfunction of the Laplacian. Then its  eigenvalue is an integer, and $F$ is a linear combination over $\C[\LL^{\pm}]$ of real analytic Eisenstein series $\mathcal{E}_{r,s}$,  almost holomorphic  modular forms and their complex conjugates.\end{thm}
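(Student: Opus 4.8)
The plan is to analyze the eigenvalue equation $\Delta_{r,s} F = \lambda F$ component by component on the $q$-expansion, and then patch the components together using the $\ssl_2$-action of $\partial,\overline{\partial}$ and the structure results already established.

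\textbf{Step 1: reduce to a fixed bidegree and exploit the pole filtration.} Since $\Delta$ preserves each $\mathcal{M}_{r,s}$ (the corollary after \eqref{Deltars}), it suffices to treat $F\in\mathcal{M}_{r,s}$ with $\Delta_{r,s}F=\lambda F$. Write the $q$-expansion $F=\sum_{k,m,n} a^{(k)}_{m,n}\LL^k q^m\overline{q}^n$ as in \eqref{qexp} and apply the explicit formula \eqref{Deltars} for the action of $\Delta_{r,s}$ on monomials. Comparing coefficients of $\LL^k q^m\overline{q}^n$ gives, for each $(m,n)$, a finite system of linear recursions in $k$ relating $a^{(k)}_{m,n}$, $a^{(k+1)}_{m,n}$, $a^{(k+2)}_{m,n}$ (the $\LL^2$-term shifts $k$ by $2$, the $\LL^1$-term by $1$). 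First I would look at the constant part $F^0=\sum_k a^{(k)}_{0,0}\LL^k$: here \eqref{Deltars} collapses to $\Delta_{r,s}(\LL^k)=-k(k+r+s-1)\LL^k$, so $a^{(k)}_{0,0}\neq 0$ forces $\lambda=-k(k+w-1)$ with $w=r+s$; in particular $\lambda\in\Z$, and (since $w$ is even and fixed) the constant part is supported on at most the two exponents $k$ solving this quadratic, namely $k$ and $1-w-k$.

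\textbf{Step 2: pin down $\lambda$ and kill the ``generic'' contributions.} Having $\lambda\in\Z$, I would next show that the only way $F$ can have $a^{(k)}_{m,n}\neq 0$ with $m,n\geq 1$ (i.e.\ a genuinely non-holomorphic, non-antiholomorphic mode) is forced by the $-4mn\LL^2$ leading term in \eqref{Deltars}: matching the top power of $\LL$ in such a mode shows it cannot be an eigenvector unless it vanishes, \emph{unless} it sits in the very special family already known to be Laplace eigenfunctions, namely the $\mathcal{E}_{r,s}$ (Corollary~\ref{corEijbasic} gives $\Delta\mathcal{E}_{r,s}=-w\,\mathcal{E}_{r,s}$, so these realise exactly $\lambda=-w$). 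Concretely, I would subtract off a suitable $\C[\LL^{\pm}]$-combination of $\mathcal{E}_{r',s'}$ (with $r'+s'$ such that $-(r'+s')$ matches $\lambda$, using that $\LL^{m}$ shifts the Laplace eigenvalue in a controlled way via \eqref{DeltaLL}) to reduce to an eigenfunction whose $q$-expansion has support only on $m=0$ or $n=0$ --- i.e.\ a sum of a ``holomorphic-type'' series in $q$ times powers of $\LL$ and an ``antiholomorphic-type'' series in $\overline{q}$ times powers of $\LL$, plus constant part.

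\textbf{Step 3: identify the remaining pieces as (anti)holomorphic modular forms.} For the piece supported on $n=0$, say $G=\sum_{k,m\geq 0} a^{(k)}_{m,0}\LL^k q^m$, the equation $\Delta_{r,s}G=\lambda G$ again becomes a recursion in $k$ for each $m$. Using that $\Delta=-\overline{\partial}\partial+r(s-1)$ and that the pole filtration is preserved, I would argue that $G$ is, up to powers of $\LL$, obtained from genuine holomorphic modular forms by the operators $\partial$; more precisely $\overline{\partial}$ applied to $G$ lands in something annihilated by a further $\overline{\partial}$, and Lemma~\ref{lemkers} / Proposition~\ref{propModularKernel} force $\LL^{\text{(suitable)}}$ times the ``top'' piece to be holomorphic modular. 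Running this together with the description of almost holomorphic modular forms as $\widetilde M[\LL^{\pm}]=M[\GE_2^*][\LL^{\pm}]$ and the $\ssl_2$-module structure from \S\ref{remalmosthol}, one recognises $G$ as an element of $\widetilde M[\LL^{\pm}]$; the $m=n=0$ constant part from Step 1 is a pure power of $\LL$, which also lies in $\widetilde M[\LL^{\pm}]$. Complex conjugation (the involution $\mathcal{M}_{r,s}\xrightarrow{\sim}\mathcal{M}_{s,r}$, which commutes with $\Delta$ up to swapping indices) handles the $m=0$ piece. Assembling Steps 1--3 gives $F\in\C[\LL^{\pm}]$-span of $\{\mathcal{E}_{r,s}\}\cup\widetilde M[\LL^{\pm}]\cup\overline{\widetilde M[\LL^{\pm}]}$, with $\lambda\in\Z$.

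\textbf{Main obstacle.} The delicate point is Step 2--3: showing that no ``new'' eigenfunction with mixed $q\overline{q}$-support can appear beyond the $\mathcal{E}_{r,s}$. The recursion from \eqref{Deltars} for fixed $(m,n)$ with $m,n\geq 1$ has the quadratic-in-$k$ term $-4mn$ as its genuinely non-trivial input, and one must check that the resulting inhomogeneous system, combined with the \emph{finiteness} of the $\LL$-expansion (only finitely many $k$ with $a^{(k)}_{m,n}\neq 0$) and the modularity constraint (via Lemma~\ref{lemFmodular}-type rigidity and Theorem~\ref{thmpartialsorthogtocusp}, which kills cusp-form contributions), leaves only the Eisenstein family. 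I expect the cleanest route is to work with $\partial$ and $\overline{\partial}$ directly rather than coefficient-chasing: decompose $F$ under the $\ssl_2$-action, use $[\partial,\Delta]=[\overline{\partial},\Delta]=0$ (Lemma~\ref{lemoperatoridentities}) so that $\partial F,\overline{\partial}F$ are again eigenfunctions, and induct on the pole order $p$ in $P^p\mathcal{M}$ together with the weight, the base case being eigenfunctions with no poles, which are forced into $M[\LL]$ and its conjugate by the classical theory of modular forms plus the absence of Maass cusp forms in $\mathcal{M}$.
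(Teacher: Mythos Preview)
Your outline is broadly on the right track and converges (especially in your ``Main obstacle'' paragraph) to the paper's actual argument, but two points deserve correction.

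\textbf{Misplacement of the Eisenstein step.} In Step~2 you suggest subtracting a $\C[\LL^{\pm}]$-combination of $\mathcal{E}_{r',s'}$ to remove mixed $q^m\overline{q}^n$ modes with $m,n\geq 1$. This is based on a misunderstanding: the $\mathcal{E}_{r,s}$ themselves have \emph{no} such mixed modes (their expansion is $\mathcal{E}^0_{r,s}+R_{r,s}(q)+\overline{R}_{s,r}(\overline{q})$, a sum of a constant part, a pure-$q$ part, and a pure-$\overline{q}$ part). More to the point, the mixed modes of an eigenfunction vanish \emph{outright}: for fixed $(m,n)$ with $mn\neq 0$, take $k$ maximal with $a^{(k)}_{m,n}\neq 0$; the coefficient of $\LL^{k+2}q^m\overline{q}^n$ in $\Delta_{r,s}F=\lambda F$ reads $-4mn\,a^{(k)}_{m,n}=0$ via \eqref{Deltars}, contradiction. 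So no subtraction is needed, and this is \emph{not} where the Eisenstein series enter.

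\textbf{The step you under-specify.} Once one knows the expansion has the shape $\alpha\LL^{k_0}+\beta\LL^{1-w-k_0}+\sum \LL^k f_k(q)+\sum \LL^k g_k(\overline{q})$, the paper proceeds exactly as your last paragraph suggests: using $[\partial,\Delta]=[\overline{\partial},\Delta]=0$, repeated application of $\partial$ (then $\overline{\partial}$) kills the $g_k(\overline{q})$ (then the $f_k(q)$) terms one at a time, so $\overline{\partial}^M\partial^N F\in\C[\LL^{\pm}]$ for suitable $M,N$. The substantive remaining step, which your Step~3 does not articulate, is the \emph{converse} direction: if $V$ denotes the $\C[\LL^{\pm}]$-module generated by the $\mathcal{E}_{r,s}$, $\widetilde{M}$, and $\overline{\widetilde{M}}$, then $\partial F\in V$ implies $F\in V$ (and likewise for $\overline{\partial}$). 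This is where the Eisenstein series genuinely enter --- as $\partial$-primitives of $\GE_{2n}\LL$ --- and the proof requires \emph{all three} obstruction results: $\mm\LL^k$ has no $\partial$-primitive in $\mathcal{M}$ (Corollary~\ref{lemnoE2prim}), cusp forms have no $\partial$-primitive (Corollary~\ref{cornocuspprim}), and $\mathcal{E}_{0,2n}\LL^k$ has no $\partial$-primitive (via Lemma~\ref{lemvanishingcond} and the nonvanishing of the $\LL$-coefficient in $\mathcal{E}^0_{0,2n}$). One also needs the cokernel computation $\mathrm{Coker}(\partial\!\mid_{\widetilde{M}[\LL^{\pm}]})\cong M[\LL^{\pm}]\oplus\C\mm[\LL^{\pm}]$ to know what is left to account for. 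Your sketch invokes only the cusp-form obstruction and leaves the rest implicit; without the other two, the ``integration back to $F$'' step is not complete.

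In summary: same strategy as the paper, but move the Eisenstein series from Step~2 (where they do nothing) to the closure-under-primitives step (where they are essential), and make that closure statement --- with its three obstruction inputs --- explicit.
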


Let us write $\mathcal{HM} \subset \mathcal{M}$ to denote the space of Laplace eigenfunctions. 
 It follows from lemma  \ref{lemoperatoridentities} 
   that it is stable under   the action of $\Or= \Q[\LL^{\pm}][\partial, \overline{\partial}]$.  Furthermore, the subspace $\mathcal{HM}(n)$ of eigenfunctions with eigenvalue $n$ is stable under the action of the Lie algebra $\ssl_2$ generated by $\partial, \overline{\partial}$.

Every  holomorphic modular form $f\in M_{n}$ lies in $\mathcal{HM}(0)$ since
$\Delta f = - \partial_{n-1} \overline{\partial}_0   f = 0$.  The same is true of $\mm$ defined in  \S\ref{remalmosthol}. More generally, $\LL^k f$ is an eigenfunction with eigenvalue $(n-k-1)k$.   Since the ring of almost holomorphic modular forms is generated by holomorphic modular forms and $\mm$ by the action of $\partial$, it follows that any almost holomorphic (or anti-holomorphic) modular form  lies in $\mathcal{HM}$.
\subsection{Proof of theorem $\ref{thmLaplaceEigen}$.}

\begin{lem} \label{lemEfunctionExp} Let $F \in \mathcal{M}_{r,s}$ such that  $\Delta_{r,s} F = \lambda F$. Then 
there exists an integer $k_0$ such that $\lambda = - k_0 (k_0+w-1)$, where $w=r+s$ is the total weight. We can assume $k_0= \min\{ k_0, 1-w-k_0\}$. 
Then $F$ is of the form 
\begin{equation} \label{EfunctionExp}  F=  \alpha \,\LL^{k_0} + \beta \,\LL^{1-w-k_0} + \sum_{k_0 \leq k\leq -s} \LL^k f_k(q) + \sum_{k_0 \leq k\leq -r} \LL^k g_k(\overline{q})  
\end{equation}
where $\alpha, \beta \in \C$, and  $f_k(q) \in \C[[q]]$,  $g_k( \overline{q}) \in \C[[\overline{q}]]$ have no constant terms. 
\end{lem}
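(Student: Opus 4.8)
The plan is to argue entirely at the level of $q$-expansions, the only real input being the explicit action of $\Delta_{r,s}$ given in $(\ref{Deltars})$. Write $F=\sum_{k=-N}^{N}\sum_{m,n\geq 0}a^{(k)}_{m,n}\,\LL^k q^m\overline{q}^n$ as in $(\ref{qexp})$ — note that only finitely many powers of $\LL$ occur — and assume $F\neq 0$. Substituting into $\Delta_{r,s}F=\lambda F$ and comparing, for fixed $m,n\geq 0$, the coefficient of $\LL^j q^m\overline{q}^n$ on both sides, formula $(\ref{Deltars})$ yields the recursion
\begin{equation*}
-4mn\,a^{(j-2)}_{m,n}+2\bigl((j-1)(m+n)+rn+sm\bigr)a^{(j-1)}_{m,n}=\bigl(\lambda+j(j+w-1)\bigr)a^{(j)}_{m,n}. \tag{$\ast$}
\end{equation*}
Everything afterwards is reading off leading and trailing behaviour in $\LL$, exploiting that $\Delta_{r,s}$ raises the power of $\LL$ by $0$, $1$, or $2$ only.

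First I would eliminate the ``mixed'' coefficients. Fix $(m,n)$ with $m,n\geq 1$, suppose some $a^{(k)}_{m,n}\neq 0$, and let $k^\ast$ be the largest such $k$. Taking $j=k^\ast+2$ in $(\ast)$ makes both terms on the left vanish and the right-hand side equal to $0$, forcing $-4mn\,a^{(k^\ast)}_{m,n}=0$, which is absurd. Hence $a^{(k)}_{m,n}=0$ whenever $m,n\geq 1$, so $F$ is the sum of a constant part $F^0\in\C[\LL^{\pm}]$, a ``holomorphic'' part $\sum_k\LL^k f_k(q)$, and an ``anti-holomorphic'' part $\sum_k\LL^k g_k(\overline{q})$, with $f_k\in\C[[q]]$ and $g_k\in\C[[\overline{q}]]$ having no constant term.

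Next I would pin down $\lambda$ and the two endpoints. Let $k_{\min}$ be the smallest power of $\LL$ occurring anywhere in $F$; by the previous step it is realised at some $(m,0)$, $(0,n)$, or $(0,0)$. In each case the left-hand side of $(\ast)$ with $j=k_{\min}$ involves only $a^{(k_{\min}-1)}_{m,n}$ and $a^{(k_{\min}-2)}_{m,n}$, which vanish; hence $\bigl(\lambda+k_{\min}(k_{\min}+w-1)\bigr)a^{(k_{\min})}_{m,n}=0$ and so $\lambda=-k_{\min}(k_{\min}+w-1)\in\Z$. The two roots of $X^2+(w-1)X+\lambda$ are $k_{\min}$ and $1-w-k_{\min}$; since $w$ is even they are distinct, and I relabel $k_0$ as the smaller of them, so that every power of $\LL$ in $F$ is $\geq k_{\min}\geq k_0$ and $k_0=\min\{k_0,1-w-k_0\}$. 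For the upper endpoints: for each $m\geq 1$ for which the $q$-part is nonzero, let $k^\ast$ be its top power; then $j=k^\ast+1$ in the case $n=0$ of $(\ast)$ gives $2m(k^\ast+s)\,a^{(k^\ast)}_{m,0}=0$, hence $k^\ast=-s$ and $f_k=0$ for $k>-s$. Applying the same argument to $\overline{F}\in\mathcal{M}_{s,r}$ (which is again a $\Delta$-eigenfunction with eigenvalue $\lambda$, by $\overline{\Delta_{r,s}f}=\Delta_{s,r}\overline{f}$) gives $g_k=0$ for $k>-r$. Finally, for $(m,n)=(0,0)$, relation $(\ast)$ collapses to $\bigl(\lambda+j(j+w-1)\bigr)a^{(j)}_{0,0}=0$, so $a^{(j)}_{0,0}\neq 0$ only for $j\in\{k_0,\,1-w-k_0\}$, i.e.\ $F^0=\alpha\LL^{k_0}+\beta\LL^{1-w-k_0}$. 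Collecting the three parts gives exactly $(\ref{EfunctionExp})$.

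I do not expect a serious obstacle: the whole proof is bookkeeping around the observation that, with respect to the grading by the power of $\LL$, the operator $\Delta_{r,s}$ is lower triangular of band-width two. The only points that require a little care are that $(\ref{qexp})$ guarantees finitely many powers of $\LL$ (so that ``largest'' and ``smallest'' power present are well defined), the degenerate case $F=0$ (trivial), and — in order not to assume $F^0\neq 0$ when pinning $\lambda$ — locating $k_{\min}$ across the $q$-part, the $\overline{q}$-part and the constant part simultaneously, rather than only in $F^0$.
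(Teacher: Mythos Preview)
Your proof is correct and follows essentially the same route as the paper's own argument: both extract the coefficient recursion from $(\ref{Deltars})$ and read off the constraints on $\lambda$ and on the top and bottom powers of $\LL$ in each of the constant, holomorphic, and anti-holomorphic pieces. One small expository slip: in the mixed-coefficient step you write that taking $j=k^\ast+2$ ``makes both terms on the left vanish'', but in fact only the $a^{(j-1)}$ term and the right-hand side vanish, leaving precisely $-4mn\,a^{(k^\ast)}_{m,n}=0$ as you then correctly conclude; and your use of complex conjugation to bound the $\overline{q}$-part is a harmless variant of the paper's direct computation.
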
 

\begin{proof}  Assume that $F$ is non-zero and denote the coefficients in its expansion $(\ref{qexp})$  by 
$a_{m,n}^{(k)}$.
We first show that  $a^{(k)}_{m,n}=0$ if $mn\neq 0$. Fix   $m,n$ such that $a^{(k)}_{m,n}\neq 0 $ for some $k$. Choose
$k$ maximal with this property. Taking the coefficient of $\LL^{k+2} q^m \overline{q}^n$ in  the equation $\Delta_{r,s} F = \lambda F$ implies, via $(\ref{Deltars})$,   that 
$ \lambda a^{(k+2)}_{m,n} =  - 4mn\, a^{(k)}_{m,n} $, which implies that $mn =0$. Therefore all $a^{(k)}_{m,n}$ vanish for $mn\neq 0$.  
Now, for any $m,n$, choose $k$ minimal such that $a^{(k)}_{m,n}$ is non-zero. Equation $(\ref{Deltars})$ implies that 
$\lambda a^{(k)}_{m,n} = - k (k + w- 1) a^{(k)}_{m,n}$, which proves the first part of the lemma. 
 The equation $x^2+ x(w-1) + \lambda=0$ has two integral solutions $k_0$ and $1-w-{k_0}$,  which are distinct since $w$ is even. The assumption that $k_0$ is the smaller of the two 
 implies that $a^{(k)}_{m,n}$ vanishes for all $k< k_0$. 

 Now consider a non-zero coefficient of the form $a^{(k)}_{m,0}$ with $m\neq 0$. Let $k$ be maximal. Equation $(\ref{Deltars})$ implies that $\lambda a^{(k+1)}_{m,0} = 2 m(k+s) a^{(k)}_{m,n} - k (k+w-1) a^{(k+1)}_{m,0}$, which implies that $m(k+s)=0$ since $a^{(k+1)}_{m,0} =0$. Therefore $k=-s$.  A similar computation with terms of the form $a^{(k)}_{0,n}$ shows that they all vanish if $k> -r$. 
It remains to determine the constant terms $a^{(k)}_{0,0}$. Equation $(\ref{Deltars})$ implies that $\lambda a^{(k)}_{0,0} = - k (k+w-1) a^{(k)}_{0,0}$, so  by the above  $a^{(k)}_{0,0}$ is non-zero only for  $k\in \{ k_0, 1-w-{k_0}\} $. 
\end{proof}

\begin{lem} Let $F\in \mathcal{M}_{r,s}$ be an eigenfunction of the Laplacian. Then there exist  integers $M, N\geq 0 $ such that 
$\overline{\partial}^M \partial^N F \in \C[\LL^{\pm}]$. \end{lem}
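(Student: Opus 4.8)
The plan is to annihilate the $\overline{q}$-dependence of $F$ by applying $\partial$ repeatedly, and then to annihilate the remaining $q$-dependence by applying $\overline{\partial}$ repeatedly, leaving only powers of $\LL$. Two structural facts make this work. First, by lemma \ref{lemoperatoridentities} the operators $\partial$ and $\overline{\partial}$ commute with $\Delta$, and since $\partial$ increases $r$ and decreases $s$ by $1$ while $\overline{\partial}$ does the reverse, they both preserve the total weight $w=r+s$. Hence every iterate $\overline{\partial}^{M}\partial^{N}F$ is again a $\lambda$-eigenfunction lying in some $\mathcal{M}_{r',s'}$ with $r'+s'=w$, and the integer $k_0$ attached to it by lemma \ref{lemEfunctionExp} --- which depends only on $\lambda$ and $w$ --- is the same throughout. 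Second, $(\ref{partialactformula})$ shows that on any series $\sum_k\LL^k h_k(q)$ with $h_k\in\C[[q]]$ the operator $\overline{\partial}_b$ acts diagonally in the $\LL$-grading, multiplying the $\LL^k$-component by $b+k$; in particular $\overline{\partial}$ sends $\overline{q}$-free series to $\overline{q}$-free series.

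First I would choose $N$ with $r+N\ge 1-k_0$ and consider $\partial^{N}F\in\mathcal{M}_{r+N,\,s-N}$. By lemma \ref{lemEfunctionExp} applied in this space, the antiholomorphic part $\sum_k\LL^k g_k(\overline{q})$ of $\partial^{N}F$ is supported in $\LL$-degrees $k$ with $k_0\le k\le -(r+N)$, a range which is empty by the choice of $N$. Thus $(\ref{EfunctionExp})$ for $\partial^{N}F$ reduces to
$$\partial^{N}F \;=\; \alpha\,\LL^{k_0}+\beta\,\LL^{1-w-k_0}+\sum_{k_0\le k\le N-s}\LL^k f_k(q),$$
with $\alpha,\beta\in\C$ and the $f_k\in\C[[q]]$ having no constant term --- a function with no $\overline{q}$-dependence whatsoever. (If the antiholomorphic part of $F$ is already empty, take $N=0$.)

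Then I would apply $\overline{\partial}$ a further $M$ times. Since the second weight of $\partial^{N}F$ equals $s-N$ and increases by one at each step, the diagonal action above multiplies the $\LL^k$-component of $\partial^{N}F$ by $\prod_{i=0}^{M-1}(s-N+i+k)$. Every $\LL$-degree $k$ occurring in the expression above satisfies $k\le N-s=-(s-N)$, so the factor $s-N+i+k$ vanishes at the index $i=-(s-N)-k\ge 0$; hence for $M$ sufficiently large this product vanishes for every such $k$, and $\overline{\partial}^{M}\partial^{N}F\in\C[\LL^{\pm}]$. (In fact one checks that for $M$ this large it equals $0$, which is of course an element of $\C[\LL^{\pm}]$.)

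The only delicate point is the interplay of the two rounds of differentiation: once $\partial^{N}$ has removed the antiholomorphic part, one must ensure that the subsequent applications of $\overline{\partial}$ neither reintroduce an $\overline{q}$-term nor fail to terminate. Both are guaranteed by the two facts isolated at the start --- the diagonal action of $\overline{\partial}$ on $\overline{q}$-free series (which is exactly where the absence of a $2n\LL$ contribution in $(\ref{partialactformula})$ is used, since $n=0$ there), and the invariance of $w$, hence of $k_0$, which keeps the $\LL$-support of the holomorphic part inside the region where the factors $s-N+i+k$ eventually hit $0$.
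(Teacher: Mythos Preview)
Your argument is correct and follows essentially the same approach as the paper's: kill the antiholomorphic terms with iterated $\partial$, then the holomorphic terms with iterated $\overline{\partial}$, using that each operator acts diagonally in the $\LL$-grading on the part of the expansion $(\ref{EfunctionExp})$ it is meant to annihilate. The paper tracks this term by term (each application of $\partial$ kills the top antiholomorphic term $\LL^{-r'}g_{-r'}(\overline{q})$ and rescales the others), whereas you instead re-invoke lemma~\ref{lemEfunctionExp} for $\partial^{N}F$ after noting that $k_0$ is unchanged and the antiholomorphic range $k_0\le k\le -(r+N)$ has become empty; this is a slightly cleaner bookkeeping of the same mechanism.
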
 
\begin{proof} Apply $\partial_r$ to the expansion $(\ref{EfunctionExp})$.  By lemma $\ref{lemkers}$, 
this annihilates the term $\LL^{k} g_{k}(\overline{q})$ for $k=-r$.  The  terms of the form $\LL^{k} g_{k}(\overline{q})$ are simply multiplied by $k+r$. Its action on terms of the form
$\LL^k f_k(q)$ increases the degree in $\LL$ by at most one, by
  by (\ref{partialactformula}). Therefore $\partial_r F $ has a similar expansion to $(\ref{EfunctionExp})$, with $(r,s)$ replaced by $(r+1, s-1)$.  Applying $\partial_{r-1}$  kills the term $\LL^{k} g_{k}(\overline{q})$ for with $k=1-r$. Proceeding in this manner, every term of the form 
$\LL^{k} g_{k}(\overline{q})$ is   eventually  annihilated (this also follows directly from lemma  \ref{lemEfunctionExp}  since  $\partial^m F$ are  eigenfunctions of the Laplacian with the same eigenvalue $\lambda$ as $F$).   Now by a similar argument, repeated application of $\overline{\partial}$ annihilates all  the terms of the form $\LL^k f_k(q)$. 
\end{proof}

\begin{lem} The maps  $\overline{\partial}: \widetilde{M} \rightarrow \widetilde{M}$ and 
$\partial : \widetilde{\overline{M}} \rightarrow \widetilde{\overline{M}}$  
are surjective.  
\end{lem}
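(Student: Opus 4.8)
The plan is to reduce everything to the description of the $\ssl_2$-action on almost holomorphic modular forms recorded in \S\ref{remalmosthol}. There one generates the relevant ring over $M$ by $\mm = 4\LL\GE_2^{*}$, and observes that $\overline{\partial}$ kills $M$ and every power of $\LL$ while $\overline{\partial}\,\mm = 1$. So the first step is to note that, on the ring $\widetilde{M}$ of almost holomorphic modular forms --- which for the present purpose must be taken to contain $\LL^{\pm}$, i.e.\ to be $M[\LL^{\pm},\mm] = \widetilde{M}[\LL^{\pm}]$ in the notation of \S\ref{remalmosthol}, since already $\overline{\partial}\GE_2^{*} = \tfrac14\LL^{-1}$ escapes $M[\GE_2^{*}]$ --- the operator $\overline{\partial}$ is $M[\LL^{\pm}]$-linear (it is a derivation, it annihilates $M$, and it commutes with multiplication by every $\LL^{k}$, hence annihilates all of $M[\LL^{\pm}]$) and therefore coincides with the $M[\LL^{\pm}]$-linear derivation $\partial/\partial\mm$ sending $\mm\mapsto 1$. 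In particular $\overline{\partial}$ maps $\widetilde{M}$ into itself.

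The second step is the elementary observation that differentiation with respect to $\mm$ is surjective on a polynomial ring over a $\Q$-algebra. Concretely, write $g\in\widetilde{M}$ as $g=\sum_{j\ge 0}g_{j}\mm^{j}$ with $g_{j}\in M[\LL^{\pm}]$; then $G=\sum_{j\ge 0}\tfrac{1}{j+1}g_{j}\mm^{j+1}$ again lies in $\widetilde{M}$ (the only denominators are integers, and $\C\subset M[\LL^{\pm}]$) and satisfies $\overline{\partial}G=g$. Equivalently $\overline{\partial}\bigl(\tfrac{1}{k+1}\mm^{k+1}\bigr)=\mm^{k}$ for all $k\ge 0$, so the image of $\overline{\partial}$ already contains the $M[\LL^{\pm}]$-span of the monomials $\mm^{k}$, which is all of $\widetilde{M}$; hence $\overline{\partial}\colon\widetilde{M}\to\widetilde{M}$ is onto.

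Finally, the statement for $\partial$ on $\widetilde{\overline{M}} = \overline{M}[\LL^{\pm},\overline{\mm}]$ follows by applying the complex-conjugation involution $f\mapsto\overline{f}$ of $\mathcal{M}$: it intertwines $\partial$ with $\overline{\partial}$, fixes $\LL$, and carries $M$ to $\overline{M}$ and $\mm$ to $\overline{\mm}=4\LL\,\overline{\GE_2^{*}}$ (using $\overline{\LL}=\LL$ and $\overline{\GE_2^{*}}=\overline{\GE_2}-\tfrac1{4\LL}$), hence sends $\widetilde{M}$ isomorphically onto $\widetilde{\overline{M}}$; so if $G\in\widetilde{M}$ is an $\overline{\partial}$-primitive of $\overline{g}$ for $g\in\widetilde{\overline{M}}$, then $\overline{G}\in\widetilde{\overline{M}}$ is a $\partial$-primitive of $g$. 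There is no genuine obstacle here beyond bookkeeping; the only point that needs care --- and the reason the lemma is not entirely trivial --- is that $\overline{\partial}$ does not respect the splitting of an almost holomorphic form into its $\LL$-homogeneous constituents, so the surjectivity must be read inside the full ring $M[\LL^{\pm},\mm]$, where $\overline{\partial}$ becomes honest differentiation in $\mm$, rather than bidegree by bidegree inside the spaces $\mathcal{M}_{r,s}$.
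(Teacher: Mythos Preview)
Your proof is correct and follows essentially the same route as the paper: both arguments rest on the single observation that $\overline{\partial}\,\mm = 1$, so that $\overline{\partial}$ acts on $M[\LL^{\pm}][\mm]$ as $\partial/\partial\mm$, and then any $f\mm^{i}$ with $f\in M[\LL^{\pm}]$ has the explicit primitive $(i+1)^{-1}f\mm^{i+1}$; the second statement follows by complex conjugation in both cases. Your added remark that the lemma must be read in $\widetilde{M}[\LL^{\pm}]$ rather than in $M[\GE_2^{*}]$ is well taken---the paper's own proof tacitly assumes this (it takes $f\in M[\LL^{\pm}]$), and indeed $\overline{\partial}\GE_2^{*}=\tfrac{1}{4}\LL^{-1}$ already leaves $M[\GE_2^{*}]$.
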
 
\begin{proof}
Since $\overline{\partial} \,\mm =1$,  any element $ f \mm^i$, where $i\geq 0$ and  $f\in M[\LL^{\pm}]$, is the $\overline{\partial}$-image  of
$(i+1)^{-1} f \mm^{i+1}$. The second statement follows by complex conjugation. 
\end{proof}

\begin{lem} Consider the linear map  
$ \partial :  \widetilde{M} [\LL^{\pm}]  \rightarrow  \widetilde{M}[\LL^{\pm}]$. Then $\ker {\partial} \cong \C$ and 
$$\mathrm{Coker}\, \partial \cong M[\LL^{\pm}] \oplus \C \mm[\LL^{\pm}] \ .$$
\end{lem}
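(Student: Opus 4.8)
The plan is to use the explicit presentation $\widetilde{M}[\LL^{\pm}] = M[\LL^{\pm}][\mm]$ as a polynomial ring in the single variable $\mm = 4\LL\GE_2^* = 4\LL\GE_2 - 1$ over $M[\LL^{\pm}] = \C[\GE_4,\GE_6][\LL^{\pm}]$, and to filter it by $\mm$-degree, $\mathrm{Fil}_j := \bigoplus_{i\le j}\mm^i M[\LL^{\pm}]$. Since $\partial$ commutes with multiplication by $\LL^{\pm}$, the spaces $\ker\partial$ and $\mathrm{Coker}\,\partial$ are $\C[\LL^{\pm}]$-modules, and ``$\ker\partial\cong\C$'' is to be read as the $\C[\LL^{\pm}]$-module statement $\ker\partial = \C[\LL^{\pm}]\cdot 1$. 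The key point is that $\partial$ is \emph{triangular} for this filtration with a transparent diagonal: combining the Ramanujan-type formula $\partial\mm = -\mm^2 + \tfrac{20}{3}\LL^2\GE_4$ with the Serre-derivative identity $(\ref{Serrederivative})$ one checks $\partial(\mathrm{Fil}_j)\subseteq\mathrm{Fil}_{j+1}$ and, for $g\in M_n[\LL^{\pm}]$,
\[
\partial(\mm^j g)\ \equiv\ -(j+n)\,\mm^{j+1}g \pmod{\mathrm{Fil}_j}.
\]
The integer $j+n$ is positive except when $(j,n)=(0,0)$, and this single exception is the source of both the kernel and the extra summand of the cokernel.

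For the kernel: if $F\in\mathrm{Fil}_j$ has nonzero leading coefficient in $\mm$-degree $j\ge 1$, then the displayed congruence (applied weight by weight, using that $M[\LL^{\pm}]=\bigoplus_n M_n[\LL^{\pm}]$ is a direct sum, so a relation $\sum_n (j+n)g_n=0$ forces each $g_n=0$) shows $\partial F$ has a nonzero term in $\mm$-degree $j+1$, hence $\partial F\ne 0$; so $\ker\partial\subseteq\mathrm{Fil}_0=M[\LL^{\pm}]$. On $M[\LL^{\pm}]$, $(\ref{Serrederivative})$ gives $\partial(\sum_n F_n) = -\mm\sum_{n>0}nF_n + 2\LL\sum_n\theta(F_n)$, whose $\mm$-part forces $F_n=0$ for all $n>0$; thus $\ker\partial = M_0[\LL^{\pm}] = \C[\LL^{\pm}]$, and conversely $\partial$ kills all of $\C[\LL^{\pm}]$.

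For the cokernel I would first establish $\widetilde{M}[\LL^{\pm}] = \Image\partial + M[\LL^{\pm}] + \C\mm[\LL^{\pm}]$ by downward induction on $\mm$-degree: for $j\ge 2$ the congruence lets one solve $\partial(c\,\mm^{j-1}g)\equiv \mm^j g \pmod{\mathrm{Fil}_{j-1}}$ with $c=-1/(j-1+n)$, so $\mathrm{Fil}_j\subseteq\Image\partial+\mathrm{Fil}_{j-1}$; iterating reduces everything modulo $\Image\partial$ into $\mathrm{Fil}_1 = M[\LL^{\pm}]\oplus\mm M[\LL^{\pm}]$, and there the identity $\partial(-\tfrac1n g)=\mm g-\tfrac2n\theta(g)\LL$ for $g\in M_n[\LL^{\pm}]$, $n>0$, absorbs $\mm M_{>0}[\LL^{\pm}]$ into $\Image\partial+M[\LL^{\pm}]$, leaving only $\mm M_0[\LL^{\pm}]=\C\mm[\LL^{\pm}]$. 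Then I would check directness modulo the image: if $g+c\mm\in\Image\partial$ with $g\in M[\LL^{\pm}]$ and $c\in\C[\LL^{\pm}]$, triangularity forces the primitive into $\mathrm{Fil}_0$ (else its image has $\mm$-degree $\ge 2$), and matching $\mm$-parts via $(\ref{Serrederivative})$ gives $-\sum_{n>0}nF_n=c$, a positive-weight quantity equal to a weight-$0$ one, hence $c=0$ and $g=0$. This yields $\widetilde{M}[\LL^{\pm}] = \Image\partial\oplus M[\LL^{\pm}]\oplus\C\mm[\LL^{\pm}]$ and the stated cokernel.

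The one genuinely delicate point is the filtration bookkeeping: verifying the displayed congruence, and then being careful in each step that the diagonal coefficients $j+n$ vanish exactly at $(j,n)=(0,0)$ and that the weight grading $M[\LL^{\pm}]=\bigoplus_n M_n[\LL^{\pm}]$ is what lets one separate components. Everything else is a mechanical consequence of $(\ref{Serrederivative})$ and the formula for $\partial\mm$.
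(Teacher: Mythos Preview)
Your proof is correct and follows essentially the same strategy as the paper: filter $\widetilde{M}[\LL^{\pm}]=M[\mm,\LL^{\pm}]$ by $\mm$-degree and use the triangularity formula $\partial(\mm^j f)\equiv -(j+n)\mm^{j+1}f$ modulo lower $\mm$-degree for $f\in M_n$, which is exactly the key computation the paper records. The paper deduces the kernel from lemma~\ref{lemkers} (elements of $\ker\partial_r$ are antiholomorphic, hence constant in $\widetilde{M}[\LL^{\pm}]$) whereas you give a direct filtration argument, and you are more explicit than the paper in checking the directness of the cokernel decomposition; but the underlying idea is identical. Your remark that ``$\ker\partial\cong\C$'' must be read as the $\C[\LL^{\pm}]$-module statement $\ker\partial=\C[\LL^{\pm}]$ is well taken and consistent with how the cokernel is stated.
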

\begin{proof}  The statement about the kernel follows immediately from lemma $\ref{lemkers}$. It follows from  the calculations in \S\ref{remalmosthol},  that  for any $f\in M_{n}$ and $k \geq 0$, 
$$\partial \mm^k f  = (-k-n) \mm^{k+1} f   \quad + \quad  \hbox{terms of degree } \leq k \hbox{ in } \mm\ .$$
Since $\LL$ commutes with $\partial$,  all terms of  the form $ f \mm^k \LL^r$, where $f\in M_n$, are in the image of $\partial$ whenever $k\geq2$ or $k=1$ and $n>0$. Conclude using $\widetilde{M}[\LL^{\pm}]= M[\mm, \LL^{\pm}]$.  
\end{proof}

\begin{cor} Let $V \subset \mathcal{M}$ denote the $\C[\LL^{\pm}]$-module   generated by the real analytic Eisenstein series
$\mathcal{E}_{r,s}$,   $\widetilde{M}$ and  $\widetilde{\overline{M}}$. 
If $F \in \mathcal{M}$ satisfies  $\partial F \in V$, then  $F \in V$.  By complex conjugation, the same statement holds with $\partial$ replaced with $\overline{\partial}$. 
\end{cor}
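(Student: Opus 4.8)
The plan is to present $V$ as a bigraded, $\partial$‑stable submodule of $\mathcal M$ whose ``cokernel'' $V/\partial V$ is spanned by classes none of which lies in the image of $\partial\colon\mathcal M\to\mathcal M$, and then to pin down $F$ using Proposition \ref{propModularKernel}. First I would check $\partial V\subseteq V$: by the defining equations $(\ref{realanalholequation})$, $\partial(\LL^{k}\mathcal E_{r,s})$ equals $(r+1)\LL^{k}\mathcal E_{r+1,s-1}\in V$ when $s\ge1$ and $\LL^{k+1}\GE_{r+2}\in\widetilde M[\LL^{\pm}]\subseteq V$ when $s=0$, while $\widetilde M[\LL^{\pm}]$ and, by complex conjugation, $\widetilde{\overline{M}}[\LL^{\pm}]$ are each $\partial$‑stable by the computations of \S\ref{remalmosthol}. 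Since $V$ is generated over $\C[\LL^{\pm}]$ by bihomogeneous elements, $V=\bigoplus_{r,s}(V\cap\mathcal M_{r,s})$ is bigraded.

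Next I would establish the decomposition
$$ V \;=\; \partial V \;+\; S[\LL^{\pm}] \;+\; \C\,\mm[\LL^{\pm}] \;+\; \sum_{j\ge 2\ \mathrm{even}}\C\,\mathcal E_{0,j}[\LL^{\pm}] \ , $$
where $S$ denotes the space of cusp forms. Indeed $\widetilde{\overline{M}}[\LL^{\pm}]\subseteq\partial V$ since $\partial$ is surjective on it (for instance $\partial\overline{\GE_2^*}=\tfrac14\LL^{-1}$); every Eisenstein generator $\LL^{k}\mathcal E_{r,s}$ with $r\ge1$ equals $\tfrac1r\,\partial(\LL^{k}\mathcal E_{r-1,s+1})\in\partial V$, so only the corner series $\mathcal E_{0,j}$ (with $j=r+s>0$ even) survive among the Eisenstein generators; and for $\widetilde M[\LL^{\pm}]$ the cokernel of $\partial$ is $M[\LL^{\pm}]\oplus\C\mm[\LL^{\pm}]$ by the preceding lemma, within which the constant forms contribute $\C[\LL^{\pm}]\subseteq\partial\widetilde{\overline{M}}[\LL^{\pm}]$ and the Eisenstein parts satisfy $\GE_{n}\LL^{k}=\partial(\LL^{k-1}\mathcal E_{n-2,0})\in\partial V$ for $n\ge4$ (using $M_{n}=\C\GE_{n}\oplus S_{n}$), leaving only cusp forms and $\C\mm[\LL^{\pm}]$. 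A bidegree count then shows that in any fixed bidegree at most one of the three surviving families is nonzero: cusp forms occur only when $r-s\ge10$, the term $\mm$ only when $r-s=0$, and the corner series only when $r-s\le-4$.

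It remains to see that none of these survivors lies in $\partial\mathcal M$. For $0\ne g\in S_{n}$, Theorem \ref{thmpartialsorthogtocusp} with test form $g$, together with the scaling relation $(\ref{scaleIP})$, forces $\langle g,g\rangle=0$, contradicting positive‑definiteness; so no $\LL^{k}g$ is a $\partial$‑image. For $\mm\LL^{k}=4\LL^{k+1}\GE_{2}-\LL^{k}$ and for $\LL^{k}\mathcal E_{0,j}$, the relevant coefficient $a^{(-r)}_{0,0}$ in the source weight equals $-1$, respectively $-B_{j+2}/\bigl(2(j+1)(j+2)\bigr)\ne0$ by $(\ref{constantpartofRealEis})$, so Lemma \ref{lemvanishingcond} rules out even a combinatorial primitive. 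Hence, if $v\in V\cap\partial\mathcal M$ is bihomogeneous and we write $v=\partial w+u$ with $w\in V$ and $u$ a survivor in the same bidegree, then $u=v-\partial w\in\partial\mathcal M$, which forces $u=0$, i.e.\ $v\in\partial V$. Finally, given $F$ with $\partial F\in V$, decompose $F=\sum F_{r,s}$; each $\partial F_{r,s}$ lies in $V\cap\partial\mathcal M=\partial V$, so $\partial F_{r,s}=\partial G_{r,s}$ for some $G_{r,s}\in V\cap\mathcal M_{r,s}$, and then $F_{r,s}-G_{r,s}\in\ker(\partial)\cap\mathcal M_{r,s}$, which by Proposition \ref{propModularKernel} is spanned by $\LL^{-r}$ and by anti‑holomorphic modular forms and hence lies in $V$. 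Therefore $F\in V$, and the statement for $\overline\partial$ follows by complex conjugation since $V=\overline V$.

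The main obstacle is the bookkeeping of the second paragraph: correctly verifying that the only generators of $V$ outside $\partial V$ are cusp forms, multiples of $\mm$, and the corner Eisenstein series $\mathcal E_{0,j}$ — it is easy to overlook that the $\mathcal E_{0,j}$ are not in the $\partial$‑image of the Eisenstein family — and that this list is exhaustive. Once that is in place, the three non‑existence checks are short applications of Theorem \ref{thmpartialsorthogtocusp} and Lemma \ref{lemvanishingcond} already available.
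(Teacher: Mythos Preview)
Your proof is correct and follows essentially the same route as the paper: reduce $V$ modulo $\partial V$ to the three survivor families $S[\LL^{\pm}]$, $\C\mm[\LL^{\pm}]$, and $\C\mathcal{E}_{0,j}[\LL^{\pm}]$, then rule each out as a $\partial$-image using Theorem~\ref{thmpartialsorthogtocusp} and Lemma~\ref{lemvanishingcond}, exactly as in the paper. Two harmless numerical slips: $\mm\in\mathcal{M}_{1,-1}$ has $\sfh$-degree $2$ (not $0$), and the corner series $\mathcal{E}_{0,j}$ start at $j=2$, so their $\sfh$-degree is $\leq -2$ (not $\leq -4$); the disjointness of the three $\sfh$-ranges, which is all you need, still holds.
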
 

\begin{proof}
 By  proposition
\ref{realanalholequation}, the Eisenstein series $\GE_{2n} \LL^k$, for $n\geq 2$ and the functions $\mathcal{E}_{r,s}$ with $r>0$ admit  $\partial$-primitives  in  $V$. By   the  above,  we can assume that $\partial F$ is a linear combination of 
$$ \mm \LL^k  \quad  , \quad f \LL^k \quad , \quad \mathcal{E}_{0,2n} \LL^k $$
where $f$ is a cusp form. Since these elements have distinct $\sfh$-degrees, we can treat each case in turn,  by linearity.
But we showed in    corollary \ref{lemnoE2prim}  that $\mm \LL^k$ has no $\partial$-primitive in $\mathcal{M}$, and likewise,  in corollary \ref{cornocuspprim}  that cusp forms have no primitives either. The elements $\mathcal{E}_{0,2n}$ (and hence $\LL^k \mathcal{E}_{0,2n}$) have
no modular primitives  by lemma  \ref{lemvanishingcond},  since the coefficient of  $\LL$ in $\mathcal{E}^0_{0,2n}$ 
is non-zero  by $(\ref{constantpartofRealEis})$. Therefore, none of these cases can arise, and we conclude that if $\partial F \in V$, so too is $F\in V$.
\end{proof} 

An eigenfunction of the Laplacian $F$ satisfies   $\overline{\partial}^M \partial^N F \in \C[\LL^{\pm}]  \subset V$. It follows from the previous corollary and induction  on $N$ that $F \in V$. 
This completes the proof. 
\begin{rem}
In passing, we have shown that the ring of almost holomorphic modular forms $M[\mm, \Lef^{\pm}]$ is  the subspace of functions $f\in \mathcal{M}$ such that $a_{m,n}^{(k)}(f)=0$ for all $n>0$, or equivalently, which satisfy  $\overline{\partial}^N f=0$ for sufficiently large $N$. 
\end{rem}

\section{Mixed Rankin-Cohen brackets} \label{sectRC}
This section can be skipped. 
Any operator in  
$\Or=\Q[\LL^{\pm}][\partial, \overline{\partial}]$ can be expressed as a polynomial in $ \LL^{\pm}, {\partial \over \partial z}, {\partial \over \partial {\overline{z}}}$. We wish  to find elements of  $\Or \otimes \Or$, which act via 
$$\Or \otimes \Or :  \mathcal{M} \otimes \mathcal{M} \To \mathcal{M}\ , $$
which are homogeneous in $\LL$ when expressed in terms of $\partial\over \partial z$ and $\partial \over \partial \overline{z}$.  Since these operators will not be used in this paper,  we shall only illustrate  how the theory of Rankin-Cohen brackets can be recovered in  some basic examples and leave the many possible extensions to the reader. 

\begin{example}  (Operators of order 1).  Starting with the four operators  given by $\partial \otimes \id$, $\id \otimes \partial$, and their complex conjugates, we form  the general  operator:
$$ a_1 \partial \otimes \id + a_2 \id \otimes \partial + a_3 \overline{\partial} \otimes \id + a_4  \id \otimes \overline{\partial}\ , $$
where $a_i \in \Q$. It acts upon $f \otimes g \in \mathcal{M}_{r_1,s_1} \otimes \mathcal{M}_{r_2,s_2}$ by 
 $$a_1 \big( \LL' {\partial f \over \partial z}  + r_1 f\big) g  + a_2 f \big( \LL' {\partial g \over \partial z}  + r_2 g\big)+ a_3 \big( - \LL' {\partial f \over \partial \overline{z}}  + s_1 f\big)g + a_4 f \big( - \LL' {\partial g \over \partial \overline{z}}  + s_2 g\big) $$ 
  where $\LL' i \pi = \LL$. The terms of degree zero in $\LL'$ vanish if and only if 
  $$a_1 r_1 + a_2 r_2 +a_3 s_1 +a_4 s_2 =0 \ .$$ 
  A basis for its solutions are  $(r_2,-r_1,0,0)$,  $( s_1,0,-r_1,0)$ and $(0,0,-s_2,s_1)$. 
Dividing by $\LL'$, the first and third solutions yield  the  combinations:
\begin{eqnarray}  \label{RC1}
[f,g]_1  &=&  r_2 {\partial f \over \partial z} g - r_1 f {\partial g \over \partial z}   \\
{[}f,g]_{\overline{1}}  &=&  s_2 {\partial f \over \partial\overline{ z}} g - s_1 f {\partial g \over \partial \overline{ z}}   \nonumber
\end{eqnarray}
which are the first Rankin-Cohen bracket  and its complex conjugate. The second solution defines an additional  element 
$ (D f) g$  of mixed weights, where 
\begin{equation} \label{Dfmixed} Df:  =  {1 \over \LL} \big( s_1 \partial_{r_1} - r_1 \overline{\partial}_{s_1}\big)  f = s_1 {\partial f \over \partial z}  + r_1 {\partial f \over  \partial \overline{z}} \qquad \in \qquad \mathcal{M}_{r_1+2,s_1} \oplus \mathcal{M}_{r_1,s_1+2} 
\end{equation}
It splits into two components of different  modular weights in the algebra $\mathcal{M}$.  For example,  $\frac{\partial f}{\partial z}  \in \mathcal{M}_{n+2,0} \oplus \mathcal{M}_{n,2}$ for any holomorphic modular form $f \in M_n$.
\end{example} 
The properties  of the brackets $(\ref{RC1})$ are d  well-known. For instance, the bracket is anti-symmetric and satisfies the Jacobi identity  \cite{Zag123} \S5.2.

\begin{example} (Operators of order 2). We can easily extend this analysis to operators of higher order.  We exclude mixed terms such as $(\ref{Dfmixed})$. The operators of order 2 and bidegrees $(2,-2)$ are of the form 
$$\partial^2 \otimes \id \quad , \quad  \partial \otimes \partial  \quad , \quad  \id \otimes \partial^2$$
A similar analysis to the one above produces a one dimensional family of linear combinations which are homogeneous of degree two in $\LL'$. They are generated by  the second order Rankin-Cohen bracket, defined for $ f\in \mathcal{M}_{r_1,s_1}$ and $g \in \mathcal{M}_{r_2,s_2}$ by
\begin{equation} 
[f, g]_2 = { r_2 (r_2+1)  \over 2} {\partial^2 f \over \partial z^2} g - (r_1+1)(r_2+1) {\partial f \over \partial z} { \partial g \over \partial z} +  { r_1 (r_1+1)  \over 2} f {\partial^2 g \over \partial z^2} \ .
\end{equation}
In bidegrees $(-2,2)$, we obtain  the complex conjugate bracket. A new feature  appears in bidegree $(0,0)$.  Indeed, consider the following  five terms of 
this type: 
$$\partial \overline{\partial} \otimes \id \quad , \quad \partial \otimes \overline{\partial} \quad , \quad \overline{\partial} \otimes \partial \quad , \quad \id \otimes \partial \overline{\partial} \quad , \quad \id \otimes \id\ . $$
The commutation relation $(\ref{partialcommutators})$ implies that $\partial \overline{\partial}$,  $\overline{\partial } \partial $ and $\id$ are linearly related so there are exactly five such operators. The linear combination 
$$ r_2s_2\, \partial \overline{\partial} \otimes \id  - s_1 r_2 \, \partial \otimes \overline{\partial}  - s_2r_1 \, \overline{\partial} \otimes \partial + r_1s_1 \, \id \otimes \partial \overline{\partial} + r_1r_2(s_1+s_2) \id \otimes \id$$
generates a one-dimensional family of operators which become homogeneous in $\LL'$ after rewriting them in terms of ${\partial \over \partial z}$ and $\partial \over \partial \overline{z}$. The coefficient of $(\LL')^2$ is the quantity
\begin{equation} \Big(f, g \Big)_2 = s_1r_2 \, {\partial f \over \partial z}  {\partial g \over \partial \overline{z}}  + s_2 r_1 \,   {\partial f \over \partial \overline{z}} {\partial g \over \partial z}  - r_1 s_1 \, f {\partial^2 g \over \partial z \partial \overline{z}} -r_2s_2 \, g {\partial^2 f \over \partial z \partial \overline{z}} \ , 
\end{equation}
which is symmetric in $f$ and $g$ and is an element of $\mathcal{M}_{r_1+r_2, s_1+s_2}$. 
It can be written more elegantly as  a composition of operators, as follows:
$$\Big(f, g \Big)_2 =  \big( {\partial_z \otimes r_2} - r_1 \otimes \partial_{z}\big) \big( s_1 \otimes \partial_{\overline{z}} - \partial_{\overline{z}}\otimes s_2\big) \, (f\otimes g)\ ,$$
where $\partial_z = \partial/\partial z$, or again as a product of  commuting determinants
$$\Big(f, g \Big)_2 =  \left|\begin{matrix} \partial_z \otimes \id  & r_1 \\ \id \otimes \partial_z & r_2 \end{matrix} \right|   \left|\begin{matrix} s_1  &  \partial_{\overline{z}} \otimes \id   \\ s_2 &  \id \otimes \partial_{\overline{z}}  \end{matrix} \right|  (f\otimes g) \ . $$
\end{example}

Interesting operators of order two in the ring $\Or \otimes \Or$ therefore include: the Laplace operators $\Delta\otimes \id$ and $\id \otimes \Delta$, the Rankin-Cohen bracket $[f,g]_2$ and its conjugate, and a  symmetric product $(f,g)_2$.  All this is part of the general study of differential operators on $\mathcal{M}$ and $\mathcal{M}^!$, which we shall not pursue any further here.

\section{Modular forms and equivariant sections} \label{sectEquivariant}
In this section, all tensor products are over $\Q$. 

\subsection{Reminders on representations of $\SL_2$.}
For all $n \geq 0$ define
$$V_{2n} = \bigoplus_{r+s = 2n} X^r Y^s \Q\ ,$$
equipped with the right action of $\SL_2(\Z)$ given by 
$$(X,Y)\big|_{\gamma} =   (aX+ bY, cX+ dY)$$
for $\gamma  $ of the form  $(\ref{gammaact})$. There is an isomorphism  of $\SL_2$-representations
$$V_{2m} \otimes V_{2n} \cong V_{2m+2n} \oplus V_{2m+2n-2} \oplus \ldots \oplus V_{2|m-n|}\ .$$
We shall use the following choice of  $\SL_2$-equivariant  projector 
\begin{equation} 
\delta^k  : V_{2m} \otimes V_{2n} \To V_{2m+2n- 2k}
\end{equation} 
by   setting 
$$\delta^k =   m  \circ \Big( {\partial \over \partial X} \otimes {\partial \over \partial Y} - {\partial \over \partial Y}\otimes {\partial \over \partial X}  \Big)^k  $$
where $m: \Q[X,Y] \otimes  \Q[X,Y]  \rightarrow  \Q[X,Y]$ is the multiplication map. For an equivalent formulation and further properties, see \cite{MMV}.

\subsection{A characterisation of functions in $\mathcal{M}$} See also \cite{Zemel}, Proposition 2.1.

\begin{prop}  \label{propmodularformsfromsections} Let $f: \HH \rightarrow V_{2n}\otimes \C$ be real analytic. Then it can be written in two equivalent manners: either in the form 
\begin{equation} \label{fsection1} f= \sum_{r+s=2n}  f^{r,s}(z) X^r Y^s
\end{equation} 
for some real analytic functions  $f^{r,s} : \HH \rightarrow \C$, or in the form 
\begin{equation} \label{fsection2} f= \sum_{r+s=2n} f_{r,s} (z)(X-zY)^r (X- \overline{z} Y)^s \ , 
\end{equation}
where  $ (z- \overline{z})^{2n} f_{r,s} : \HH \rightarrow \C$ are real analytic. 
The function $f$ is equivariant:
\begin{equation} \label{fequivariant}  f(\gamma(z)) \big|_{\gamma} = f(z) \qquad \hbox{ for all }  \quad \gamma \in \SL_2(\Z)
\end{equation} 
if and only if the coefficients $f_{r,s}$ are modular $(\ref{fbimod})$ of weight $(r,s)$. 
Suppose now that the coefficients $(\ref{fsection1})$ of $f$ admit expansions  of the form
$$f^{r,s} \in \C[[q, \overline{q}]][z, \overline{z} ]\ .$$
Then  $f$ is equivariant if and only if $f_{r,s}  \in P^{-r-s} \mathcal{M}_{r,s}\ .$
\end{prop}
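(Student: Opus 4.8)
The plan is to deduce the statement from the equivalence already established in the proposition, namely that $f$ is equivariant $(\ref{fequivariant})$ if and only if each $f_{r,s}$ is modular of weight $(r,s)$. Granting this, the reverse implication is immediate, since $f_{r,s}\in P^{-r-s}\mathcal{M}_{r,s}$ forces each $f_{r,s}$ to be modular of weight $(r,s)$. For the forward implication the remaining task is to show that, when $f$ is equivariant \emph{and} the $f^{r,s}$ lie in $\C[[q,\overline{q}]][z,\overline{z}]$, each $f_{r,s}$ admits an expansion in $\C[[q,\overline{q}]][\LL^{\pm}]$ whose pole in $\LL$ has order at most $r+s$. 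I would do this in two steps: an explicit change of basis between $(\ref{fsection1})$ and $(\ref{fsection2})$, followed by an appeal to Lemma $\ref{lemlogqexpansion}$.

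First I would carry out the change of basis. Writing $u=X-zY$ and $v=X-\overline{z}Y$, one inverts to find $X=(zv-\overline{z}u)/(z-\overline{z})$ and $Y=(v-u)/(z-\overline{z})$, so that
$$X^aY^b=\frac{(zv-\overline{z}u)^a(v-u)^b}{(z-\overline{z})^{2n}}\quad(a+b=2n).$$
Substituting into $(\ref{fsection1})$ and extracting the coefficient of $u^rv^s$ shows that $f_{r,s}=(z-\overline{z})^{-2n}P_{r,s}$, where $P_{r,s}$ is a $\C[[q,\overline{q}]]$-linear combination of the $f^{a,b}$ with coefficients polynomial in $z$ and $\overline{z}$; in particular $P_{r,s}\in\C[[q,\overline{q}]][z,\overline{z}]$, with no pole in $z-\overline{z}$, because the numerators $(zv-\overline{z}u)^a(v-u)^b$ are homogeneous of degree $2n$ in $(u,v)$.

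Next I would feed this into the invariant theory argument of Lemma $\ref{lemlogqexpansion}$. Since $\LL=i\pi(z-\overline{z})$ lies in $\mathcal{M}_{-1,-1}$, the factor $(z-\overline{z})^{2n}$ is modular of weights $(-2n,-2n)$, so when $f$ is equivariant (hence each $f_{r,s}$ modular of weight $(r,s)$) the function $P_{r,s}=(z-\overline{z})^{2n}f_{r,s}$ is modular of weights $(r-2n,s-2n)$. Because $\log q=2\pi iz$ and $\log\overline{q}=-2\pi i\overline{z}$, one has $\C[[q,\overline{q}]][z,\overline{z}]=\C[[q,\overline{q}]][\log q,\log\overline{q}]$, so Lemma $\ref{lemlogqexpansion}$ applies to $P_{r,s}$ and gives $P_{r,s}\in\C[[q,\overline{q}]][\LL]$. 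Dividing by $(z-\overline{z})^{2n}=(i\pi)^{-2n}\LL^{2n}$ then yields $f_{r,s}\in\C[[q,\overline{q}]][\LL^{\pm}]$ with no monomial of $\LL$-degree below $-(r+s)$; together with the facts that $f_{r,s}$ is real analytic on $\HH$ and modular of weight $(r,s)$, this places $f_{r,s}$ in $P^{-r-s}\mathcal{M}_{r,s}$, completing the argument.

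I expect essentially all of this to be routine bookkeeping, the one place warranting care being the pole bound: one must check that the change of basis accumulates the denominator $(z-\overline{z})^{2n}$ \emph{exactly} and that $P_{r,s}$ is then a genuine element of $\C[[q,\overline{q}]][z,\overline{z}]$ rather than of $\C[[q,\overline{q}]][z,\overline{z}][(z-\overline{z})^{-1}]$, since this is what forces the order of the pole in $\LL$ to be $r+s$ and no larger. As indicated, this reduces to the homogeneity of the numerators.
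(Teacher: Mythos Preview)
Your argument is correct and follows essentially the same route as the paper: perform the explicit change of basis between $\{X^aY^b\}$ and $\{(X-zY)^r(X-\overline{z}Y)^s\}$ to see that $(z-\overline{z})^{2n}f_{r,s}\in\C[[q,\overline{q}]][z,\overline{z}]$, then apply Lemma~\ref{lemlogqexpansion} to this modular function and divide through by $\LL^{2n}$ to land in $P^{-r-s}\mathcal{M}_{r,s}$. You are in fact more explicit than the paper about why the pole order is bounded by $r+s=2n$ (via the homogeneity of the numerator), which the paper leaves implicit. The one thing you take for granted that the paper actually proves is the equivalence ``$f$ equivariant $\Leftrightarrow$ each $f_{r,s}$ modular of weight $(r,s)$''; this follows in one line from the identity $(X-\gamma(z)Y)\big|_{\gamma}=(cz+d)^{-1}(X-zY)$, and you might want to state it explicitly rather than invoke it as ``already established''.
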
 
\begin{proof} First observe that the  inclusion 
$$ \Z[z,\overline{z}] [(X-zY), (X-\overline{z} Y)]  \To \Z[z,\overline{z}] [ X, Y]$$
becomes an isomorphism after inverting  $z- \overline{z}$. Indeed, the inverse is given  by 
\begin{eqnarray}  \label{XYinverse}
X &  \mapsto  &   {z \over z- \overline{z}}  (X-\overline{z}Y) -   {\overline{z} \over z- \overline{z}}  (X- zY)     \\
 Y &  \mapsto  &   {1 \over z- \overline{z}}  (X-\overline{z}Y) -   {1 \over z- \overline{z}}  (X- zY) \ .\nonumber 
 \end{eqnarray} 
This proves that the expansions $(\ref{fsection1})$ and $(\ref{fsection2})$ are equivalent.  The identity
$$(X- \gamma(z) Y) \big|_{\gamma} =  {\det(\gamma) \over (c z +d)}  (X- zY)$$
 implies that $(X-zY)^r (X-\overline{z} Y)^s$  transforms, under the simultaneous action of $\SL_2(\Z)$  on the argument $z$ in the usual manner and on the right of $V_{2n}$, like a modular function of weights $(-r,-s)$.  The   coefficient of  $(X-zY)^r (X-\overline{z} Y)^s$ for $f$   equivariant is therefore modular of weights $(r,s)$. 
    For the last statement, the assumption on the Fourier expansions of $f^{r,s}$ implies that 
 the coefficients $(z-\overline{z})^{r+s} f_{r,s}$ admit expansions in  the ring $\C[z, \overline{z}][[q, \overline{q}]]$ by $(\ref{XYinverse})$. By  lemma \ref{lemlogqexpansion}, the $f_{r,s}$  have expansions of the form  $(\ref{qexp})$.
\end{proof} 

We construct equivariant functions $f$ from iterated integrals. These only involve non-negative powers of $\log q$.  Their coefficients $f_{ij}$  will have  poles in $\LL$ of degree at most the total weight, and their modular weights 
will naturally  be located in the first quadrant $r,s\geq 0$.

\subsection{Vector-valued differential equations} The operators  $\partial, \overline{\partial}$ of definition \ref{partialdefn} admit the following interpretation. 
\begin{prop} \label{propdF=A}  Let $F,A,B: \mathcal{H} \rightarrow V_{2n}\otimes \C$ be real analytic. Then the equation 
\begin{equation} \label{dF=A} {\partial F \over \partial z} = {2 \pi i  \over 2}  A(z)
\end{equation} 
is equivalent to the system of equations  for all $r+s = 2n$, and $r,s \geq 0$:
\begin{eqnarray}
 \partial F_{2n,0}  & = &   \LL  A_{2n,0}   \\
 \partial  F_{r,s} - (r+1) F_{r+1, s-1}  &= & \LL  A_{r,s} \qquad \hbox{ if } \quad s\geq 1 \ .  \nonumber 
\end{eqnarray} 
In a similar manner, 
\begin{equation} \label{dbarF=B} {\partial F \over \partial \overline{z}} = {2 \pi i  \over 2}  B(z)
\end{equation} 
is equivalent to the following system of equations:
\begin{eqnarray}
 \overline{\partial}  F_{0,2n}  & = &   \LL  B_{0,2n}   \\
 \overline{\partial} F_{r,s} - (s+1) F_{r-1, s+1}  &= & \LL  B_{r,s} \qquad \hbox{ if } \quad r\geq 1 \ .  \nonumber 
\end{eqnarray} 

\end{prop}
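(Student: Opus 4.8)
The plan is to unwind the definitions on both sides and match coefficients of the monomials $(X-zY)^r(X-\overline z Y)^s$ in $V_{2n}\otimes\C$. First I would write $F$ in the equivariant basis as in Proposition~\ref{propmodularformsfromsections}, namely $F=\sum_{r+s=2n}F_{r,s}(z)(X-zY)^r(X-\overline z Y)^s$, and similarly $A=\sum A_{r,s}(z)(X-zY)^r(X-\overline z Y)^s$. The point is that $\partial/\partial z$ acts on the coefficient functions $F_{r,s}$ in the usual way, but also acts on the basis vectors $(X-zY)^r(X-\overline z Y)^s$, since these depend explicitly on $z$ (though not on $\overline z$). Differentiating, $\tfrac{\partial}{\partial z}(X-zY)^r = -rY(X-zY)^{r-1}$, while $\tfrac{\partial}{\partial z}(X-\overline z Y)^s=0$.

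The key computational step is then to re-express $Y(X-zY)^{r-1}(X-\overline z Y)^s$ back in the equivariant basis. From the identity $(\ref{XYinverse})$ one has $Y=\tfrac{1}{z-\overline z}\big((X-\overline z Y)-(X-zY)\big)$, so
\begin{equation}
Y(X-zY)^{r-1}(X-\overline z Y)^s = \tfrac{1}{z-\overline z}\Big((X-zY)^{r-1}(X-\overline z Y)^{s+1}-(X-zY)^r(X-\overline z Y)^s\Big). \nonumber
\end{equation}
Collecting the coefficient of $(X-zY)^r(X-\overline z Y)^s$ in $\tfrac{\partial F}{\partial z}$, one gets a contribution $\tfrac{\partial F_{r,s}}{\partial z}$ from differentiating the coefficient, a contribution $+\tfrac{r}{z-\overline z}F_{r,s}$ from the $-(X-zY)^r(X-\overline z Y)^s$ term of $\partial_z$ acting on the $(r,s)$ basis vector (with the sign from $-rY$), and a contribution $-\tfrac{r+1}{z-\overline z}F_{r+1,s-1}$ from differentiating the $(r+1,s-1)$ basis vector, whose $(X-zY)^r(X-\overline zY)^s$ piece appears. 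Multiplying through by $z-\overline z$ and recalling $\partial_r=(z-\overline z)\tfrac{\partial}{\partial z}+r$ from Definition~\ref{partialdefn}, the coefficient of $(X-zY)^r(X-\overline z Y)^s$ in the equation $\tfrac{\partial F}{\partial z}=\tfrac{2\pi i}{2}A$ becomes $\partial_r F_{r,s}-(r+1)F_{r+1,s-1}=\tfrac{2\pi i}{2}(z-\overline z)A_{r,s}=\LL\,A_{r,s}$, using $\LL=i\pi(z-\overline z)$ from $(\ref{LLdef})$. When $s=0$ the middle term is absent (there is no $(r+1,-1)$ term), giving the first displayed equation $\partial F_{2n,0}=\LL A_{2n,0}$.

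The $\overline\partial$ statement $(\ref{dbarF=B})$ follows by the identical computation with the roles of $z$ and $\overline z$, equivalently $(X-zY)$ and $(X-\overline z Y)$, interchanged: now $\tfrac{\partial}{\partial\overline z}$ kills $(X-zY)^r$ and sends $(X-\overline z Y)^s\mapsto -sY(X-\overline z Y)^{s-1}$, and using $Y=\tfrac{1}{z-\overline z}\big((X-\overline z Y)-(X-zY)\big)$ again one lands on $\overline\partial_s F_{r,s}-(s+1)F_{r-1,s+1}=\LL B_{r,s}$ for $r\ge 1$, and $\overline\partial F_{0,2n}=\LL B_{0,2n}$ for $r=0$. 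I expect the only mild obstacle to be bookkeeping of signs and of which basis vectors feed into the $(r,s)$ coefficient — in particular keeping straight that it is the derivative of the $(r+1,s-1)$ coefficient function's basis vector that contributes the $F_{r+1,s-1}$ term, not the other way around — but this is a routine linear-algebra identity once $(\ref{XYinverse})$ is in hand, and there is no analytic subtlety since everything is a finite expansion in the $V_{2n}$ basis with real-analytic coefficients.
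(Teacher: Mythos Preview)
Your proposal is correct and follows essentially the same approach as the paper: differentiate the expansion $(\ref{fsection2})$, use the identity $(\ref{XYinverse})$ to rewrite $Y$ in the equivariant basis, collect coefficients of $(X-zY)^r(X-\overline z Y)^s$, multiply through by $z-\overline z$, and identify $\partial_r$ and $\LL$. The paper handles the $\overline\partial$ case by the one-word remark ``conjugation'' whereas you spell out the symmetric computation, but this is the same argument.
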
 

\begin{proof} Differentiate the expression $(\ref{fsection2})$ to obtain 
$${\partial F \over \partial z} = \sum_{r+s=2n} \Big( { \partial  F_{r,s} \over \partial z} (X-z Y)^r (X- \overline{z} Y)^s - rY F_{r,s} (X-z Y)^{r-1} (X- \overline{z} Y)^s\Big)  \  .$$
On replacing $Y$ using the second line of $(\ref{XYinverse})$, the right-hand side becomes
$$ \sum_{r+s=2n} \Big( \Big( {\partial  F_{r,s} \over \partial z} + {r F_{r,s} \over z- \overline{z}}\Big)    (X-z Y)^r (X- \overline{z} Y)^s - { r F_{r,s} \over z-\overline{z}}  (X-z Y)^{r-1} (X- \overline{z} Y)^{s+1}\Big)\ . $$
Multiplying through by $z- \overline{z}$, collecting terms and using  definition $\ref{partialdefn}$, we see that the equation $(\ref{dF=A})$ is equivalent  to the system of equations
$$ \partial_r F_{r,s} - (r+1) F_{r+1, s-1} = i \pi  (z- \overline{z}) A_{r,s} $$
for $ 1 \leq s \leq 2n$, and in the case $s=0$, 
$ \partial_{2n} F_{2n,0}  =  i \pi  (z- \overline{z})  A_{2n,0}$.
Conclude using $(\ref{LLdef})$. The second set of equations can be deduced by conjugation. 
\end{proof}
The commutation relation $[\partial , \overline{\partial}]=\sfh$ of proposition $\ref{propsl2}$  is equivalent to 
$${\partial^2 F \over \partial z \partial \overline{z} } =  {\partial^2  F \over \partial  \overline{z} \partial z } \ . $$

\begin{lem} \label{lemapplydeltak}  Suppose that $A: \HH \rightarrow V_{2n}$ and set
$$F= {\delta^k \over (k!)^2} \Big(  (X-zY)^{2m} \otimes A \Big)\ . $$
Then  $F: \HH \rightarrow V_{2m+2n-2k}$ vanishes if $k>2n $ or $k>2m$, but otherwise satisfies
\begin{equation} \label{Frsequationinlemmadeltak}
F_{ r,s }  =    (z- \overline{z})^k \,     \binom{2m}{k}\binom{s+k}{k} A_{r-2m+k, s+k}   
  \end{equation}
where we set $A_{p,q}=0$ for $p<0$ or $q<0$. Therefore  $F_{r,s}$ vanishes   if $r<2m-k$, or equivalently, $s+k>2n$. 
\end{lem}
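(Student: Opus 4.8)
The plan is to reduce the statement to a single polynomial identity and compute it by iterating the first-order operator underlying $\delta^k$. Write $L = X - zY$ and $M = X - \overline{z}Y$, and expand $A = \sum_{p+q = 2n} A_{p,q}(z)\, L^p M^q$ as in $(\ref{fsection2})$. By linearity it suffices to evaluate $\delta^k\big( L^{2m} \otimes L^p M^q\big)$ for a single pair $(p,q)$ with $p + q = 2n$. Recall $\delta^k = m \circ D^k$, where $m$ is the multiplication map and, on $\Q[X_1,Y_1]\otimes \Q[X_2,Y_2]$ before multiplying, $D = \frac{\partial}{\partial X_1}\frac{\partial}{\partial Y_2} - \frac{\partial}{\partial Y_1}\frac{\partial}{\partial X_2}$. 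So I would first rewrite $L^{2m}\otimes L^pM^q$ as $(X_1 - zY_1)^{2m}(X_2-zY_2)^p(X_2-\overline{z}Y_2)^q$, apply $D^k$ there, and only then apply $m$.

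The key computation is the single-step identity
\[
D\big( (X_1-zY_1)^a \otimes (X_2-zY_2)^p (X_2-\overline{z}Y_2)^b \big) = a\, b\, (z-\overline{z})\, (X_1-zY_1)^{a-1}\otimes (X_2-zY_2)^p (X_2-\overline{z}Y_2)^{b-1},
\]
which is immediate from the product rule, the essential point being that the $z$-contributions produced by $\frac{\partial}{\partial X_1}\frac{\partial}{\partial Y_2}$ and by $\frac{\partial}{\partial Y_1}\frac{\partial}{\partial X_2}$ conspire so that the exponent of $(X_2-zY_2)$ is left unchanged while those of $(X_1-zY_1)$ and of $(X_2-\overline{z}Y_2)$ each drop by one; this is, concretely, a manifestation of the $\SL_2$-equivariance of $D$. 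Iterating this keeps us inside monomials of the single shape $(X_1-zY_1)^a(X_2-zY_2)^p(X_2-\overline{z}Y_2)^b$ and yields
\[
D^k\big( (X_1-zY_1)^{2m}\otimes (X_2-zY_2)^p(X_2-\overline{z}Y_2)^q\big) = (z-\overline{z})^k\, \frac{(2m)!}{(2m-k)!}\,\frac{q!}{(q-k)!}\,(X_1-zY_1)^{2m-k}\otimes (X_2-zY_2)^p(X_2-\overline{z}Y_2)^{q-k},
\]
which vanishes as soon as $k > 2m$ or $k > q$. Since $q$ runs over $0,\dots,2n$, this already gives the stated vanishing of $F$ when $k > 2m$ or $k > 2n$.

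To finish, I would apply $m$ (so $(X_1-zY_1), (X_2-zY_2)\mapsto L$ and $(X_2-\overline{z}Y_2)\mapsto M$) and divide by $(k!)^2$, turning the factorial ratios into $\binom{2m}{k}\binom{q}{k}$; summing over $p, q$ gives
\[
F = \sum_{p+q=2n} A_{p,q}(z)\,(z-\overline{z})^k\,\binom{2m}{k}\binom{q}{k}\, L^{\,2m+p-k}\,M^{\,q-k}.
\]
Reading off the coefficient of $L^r M^s$ amounts to setting $r = 2m+p-k$, $s = q-k$, i.e.\ $p = r - 2m + k$ and $q = s+k$ (so $p+q = 2n$ automatically), which is exactly $F_{r,s} = (z-\overline{z})^k \binom{2m}{k}\binom{s+k}{k} A_{r-2m+k,\,s+k}$. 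The convention $A_{p,q} = 0$ for $p < 0$ or $q < 0$ then covers the boundary cases: $r < 2m - k$ forces $p < 0$, equivalently $s + k = q > 2n$, so $F_{r,s}=0$ there, while $q < k$ is already killed by $\binom{q}{k} = 0$.

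The only real point requiring care — the main obstacle, such as it is — is bookkeeping: verifying that the $(X_2-zY_2)$-exponent is preserved at \emph{every} step of the iteration, so that the induction closes on monomials of one fixed shape rather than branching, and keeping straight which exponent feeds which binomial coefficient in the passage from $(p,q)$ to $(r,s)$. One could instead invoke the equivariance properties of $\delta^k$ from \cite{MMV} and bypass the explicit derivative computation, but the one-line identity above seems the most transparent route.
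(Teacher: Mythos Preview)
Your proof is correct and follows essentially the same approach as the paper: both compute $\delta^k$ directly on the expansion of $A$ in the $(X-zY),(X-\overline{z}Y)$ basis and then reindex. You have simply made explicit the single-step identity for $D$ and its iteration, which the paper compresses into the phrase ``by direct application of the definition of $\delta^k$''.
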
 
\begin{proof} By direct application of the definition of $\delta^k$, we find that 
\begin{multline} {\delta^k \over (k!)^2}  \Big( (X- zY)^{2m} \otimes A(X,Y) \Big)  \nonumber  =    \\ (z-\overline{z})^k  \sum_{r, s, r+2m\geq k}   \binom{2m}{k}\binom{s}{k} A_{r,s} (X-zY)^{r+2m-k} (X- \overline{z}Y)^{s-k}  \end{multline}
where $$A(X,Y) = \sum_{r+s = 2n}  A_{r,s} (X-zY)^r (X-\overline{z} Y)^s\ . $$ 
Equation  $(\ref{Frsequationinlemmadeltak})$ follows on replacing $(r,s)$ with $(r-2m+k,s+k)$. 
\end{proof} 

Combining the  lemma with proposition \ref{propdF=A}, we find that if 
$${\partial F \over \partial z}  = {(\pi i)^{k+1}} {\delta^k \over (k!)^2} \Big(   f_{2m+2}(z) (X-zY)^{2m}\otimes A(X,Y)\Big)$$
then 
\begin{equation}  \label{finalequation} \partial F_{2n,0} =  \binom{2m}{k}\LL^{k+1}  f_{2m+2}(z)  A_{2n-2m+k,k} (z)\ . \end{equation}

\subsection{Example: real analytic Eisenstein series} 

Let us write, for $w>0$ even:
$$\mathcal{E}_w(z)= \sum_{r+s=w}  \mathcal{E}_{r,s} (X-zY)^r (X-\overline{z}Y)^s \ . $$
It is equivariant for $\SL_2(\Z)$.  Consider  also the equivariant 1-form 
\begin{equation}\label{Eunderlinedefn} \underline{E}_{w+2}(z) = 2 \pi i \, \GE_{w+2}(z) (X- zY)^{w} dz \ . 
\end{equation} 
 Then by proposition (\ref{dF=A}) the systems of equations $(\ref{realanalholequation})$ and $(\ref{realanalantiholequation})$ are equivalent to the following differential equation:  
 \begin{eqnarray} d \mathcal{E}  &  =  & { 1 \over 2}  \Big( \underline{E}_{w+2}(z)   + \overline{\underline{E}_{w+2}(z)}  \Big)  \nonumber \\ 
&  = &   \mathrm{Re} \, \big(  \underline{E}_{w+2}(z)  \big)  \ . \nonumber
\end{eqnarray} 
The real analytic Eisenstein series  of \S\ref{sectRAEisenstein} are the real parts of  primitives of  vector-valued holomorphic Eisenstein series \cite{MMV}, \S9.2.2.  This motivates a general construction of modular forms in $\mathcal{M}$ via equivariant iterated integrals of modular forms.

\section{Modular forms from equivariant iterated integrals}
The  main idea behind our construction of functions in $\mathcal{M}$ is a modification of the theory of single-valued periods as we presently explain in some  simple  examples.
\subsection{Single-valued functions}
  The logarithm 
\begin{equation} \label{logzasint} \log z = \int_1^z {dt \over t}
\end{equation} 
is a multi-valued analytic function on $\C^{\times}$.  This means that its pull-back to  the universal covering space $\C$ of $\C^{\times}$ based at $1$ is an analytic function.  Indeed, via the covering map $\exp: \C \rightarrow \C^{\times}$, it  simply corresponds to  the function $z$ on $\C$.  
The fundamental group of $\C^{\times}$ at the  point $1$  is isomorphic to $\Z$, and is generated by a simple loop around the origin. Analytic continuation  around this loop creates a discontinuity
\begin{equation} \label{logdiscont} \log z  \mapsto \log z + 2  \pi  i \ .
\end{equation}
Since the monodromy period $2  \pi i $ is purely imaginary, the multivaluedness of the logarithm can be eliminated by taking its real part:
$$\log |z| =    \mathrm{Re}\,  \big(\! \log z\big) \ .$$
This is the `single-valued' version of the logarithm. It  is a well-defined function on $\C^{\times}$, invariant under the left action of $\Z= \pi_1(\C^{\times}, 1)$.  The dilogarithm
$$\Li_2(z) = \sum_{k \geq 1} {z^k \over k^2}$$
defined for $|z|<1$ and analytically continued to a multivalued function on $\C^{\times} \backslash \{0,1\}$,
 satisifes the equation $d \Li_2(z) = - \log(1-z) {dz\over z}$, and has a single valued version :
 $$ D(z)= \mathrm{Im} \, \Big( \Li_2(z) + \log |z| \log(1-z) \Big)  $$
called the Bloch-Wigner function. In the following sections we construct modular analogues of the functions $\log|z|$  and $D(z)$.

There is a  general way to associate single-valued functions to any period integrals \cite{NotesMot} \S4, \S8.3 generalising $(\ref{logzasint})$.
The latter can depend on parameters, or even be constant.  
A variant of this construction, applied to iterated integrals of modular forms, yields a class of functions in $\mathcal{M}$. 
This follows from proposition \ref{propmodularformsfromsections} since a  real analytic  section 
$f: \HH \rightarrow V_n\otimes \C$ is equivariant if and only if the coefficients $f_{r,s}$  in the expansion
$f = \sum_{r,s} f_{r,s} (X-z Y)^{r} (X-\overline{z} Y)^{s} $ are modular of weights $(r,s)$. The equivariance $$  f(\gamma(z))\big|_{\gamma} = f(z)  \qquad \hbox{ for all } \gamma \in \SL_2(\Z)   $$
can be interpreted as  single-valuedness of the vector-valued function 
$f(z)$ on the orbifold quotient of $\HH$ by the action of $\SL_2(\Z)$.

\subsection{Notation} For $f$ a holomorphic modular form of weight $n$, let us denote by 
$$ \underline{f}(z) = 2 i \pi  f(z) (X-zY)^{n-2} dz\ .$$
It is invariant under the action of  $\SL_2(\Z)$ on $z$ and $X,Y$. We shall  write 
\begin{equation}\nonumber
S= 
\left(
\begin{array}{cc}
  0   & -1  \\
   1  &   0 
\end{array}
\right)\quad, \quad  T= \left(
\begin{array}{cc}
  1   &  1  \\
   0  &   1 
\end{array}
\right)
\ .
\end{equation}

\subsection{The modular function $\mathrm{Im}(z)$} 
Denote by 
$$F(\tau) = \int_{\tau}^{i \infty}  \underline{1} = 2 \pi i  \int_{\tau}^{i \infty} (X-z Y)^{-2} dz =  {  2\pi i \over Y(X-zY)}\ .$$
We obtain in this manner a $\Q(X,Y)$-valued cocycle 
$$F(\gamma(\tau))\big|_{\gamma} - F(\tau) = C_{\gamma}$$
where $C: \SL_2(\Z) \rightarrow \Q(X,Y)$ is the function 
$$C_{\gamma} =  - {c \, 2\pi i \over Y (Xc +Yd ) } \quad \hbox{ where } \quad    \gamma = \begin{pmatrix} a & b\\c & d 
  \end{pmatrix}  \ .$$
This cocycle is cuspidal (vanishes for $\gamma=T$). 
Since $C_{\gamma}$ is  imaginary, the real  part $\mathrm{Re}\, F(\tau)$ is modular equivariant. Indeed we have
$$\mathrm{Re}\, F(\tau)   =   {\LL \over (X-\tau Y) (X - \overline{\tau} Y)} $$
which is  $\SL_2(\Z)$-invariant since  $\LL $ is  modular of weights $(-1,-1)$.

\subsection{Primitives of holomorphic modular forms} Now we construct, or fail to construct, equivariant versions of classical Eichler integrals in the same vein. 

\subsubsection{Cusp forms} Let $f\in S_{2n}$ be a cusp form with rational Fourier coefficients. 
Let \begin{equation} \label{Ftaudef} F(\tau) =  \int_{\tau}^{i \infty} \underline{f}(z)  =   2 \pi i \int_{\tau}^{i \infty} f(z) (X-zY)^{2n-2} dz\ .   
\end{equation}
It satisfies, by invariance of $\underline{f}$, the following monodromy  equation  
\begin{equation} \label{Fdiscont} F(\gamma(\tau))\big|_{\gamma} \ = \  F(\tau) + C_{\gamma}\ ,
\end{equation}
for all $\gamma \in \SL_2(\Z)$ and $\tau \in \HH$.  It is the analogue of $(\ref{logdiscont})$. It follows from $(\ref{Fdiscont})$ that the function  $C: \SL_2(\Z) \rightarrow \C[X,Y] $ is a cocycle for $\SL_2(\Z)$, and indeed that 
$$C_S  =  C_S^+   +  i\, C_S^{-} $$ 
where $C_S^{+},C_S^- \in \R[X,Y]$ are the even and odd real period polynomials of $f$.    By the Eichler-Shimura theorem,  the classes of $C^+$ and $ C^-$ are independent in group cohomology $H^1(\SL_2(\Z), \C[X,Y])$, and  so there is no way, by taking real and imaginary parts, that we can kill the right-hand side of $(\ref{Fdiscont})$ to obtain a single-valued function.
Therefore  a single iterated integral, or primitive,  of a cusp form yields nothing new. 
Indeed, by proposition \ref{propdF=A}, such a function, if it existed, would  provide a solution to the equation $\partial F = f$,  in $\mathcal{M}$,  which would contradict  \ref{cornocuspprim}. This obstruction can be circumvented by introducing poles; cusp forms do have primitives in   $\mathcal{M}^!$ (see \S \ref{sectMercusp}).

\subsubsection{Eisenstein series}  \label{sectEisasint}  If $f=\G_{2k+2}$  
the corresponding integral  $(\ref{Ftaudef})$ diverges, but can be regularised in the manner of \cite{MMV} \S4, yielding a primitive 
$$F(\tau) =   \int_{\tau}^{\tone_{\infty}} \underline{E_{2k+2}}(z) $$
 which satisfies $(\ref{Fdiscont})$.  Here, $\tone_{\infty}$  denotes the unit tangential basepoint  at the cusp.  The associated Eisenstein cocycle $C$  satisfies (\cite{MMV}, \S7): 
$$C_{\gamma}  =   {(2k)! \over 2 } {  \zeta(2k+1) \over   (2\pi i)^{2k}}  \,  Y^{2k}\Big|_{\gamma - 1}   +(2 \pi i ) e^0_{2k+2}(\gamma)$$
where $e^0_{2k+2}(\gamma) \in \Q[X,Y]$.  Now if we consider the real part of $F(\tau)$, it satisfies the analogue of $(\ref{Fdiscont})$ with $C$ replaced with $\mathrm{Re}\, C$. The  key point is that the real part of $C_{\gamma}$ only involves the first term in the previous equation,  which is a coboundary for $\SL_2(\Z)$. Therefore,   the function $\mathrm{Re}\, F(\tau)$   can be  modified  in the following manner to define 
  a vector-valued real-analytic function 
$$  \mathcal{E}_{2k}(X,Y)(\tau) \ =      \mathrm{Re}\, F(\tau)    \ +  \    {(2k)!  \over 2 } {  \zeta(2k+1) \over   (2\pi i)^{2k}}  Y^{2k}$$  
which is now invariant under the action of $\SL_2(\Z)$. 
 This   function can  be rewritten
 $$ \mathcal{E}_{2k}(X,Y)(z) = \sum_{r+s= 2k}  \mathcal{E}_{r,s} (z)(X-zY)^r (X-\overline{z} Y)^s$$
 where the functions $\mathcal{E}_{r,s}:\HH \rightarrow \C$  are  modular, and lie in $\mathcal{M}_{r,s}$.
As the notation suggests, the coefficients are precisely the real analytic functions $\mathcal{E}_{r,s}$ defined  in \S\ref{sectRAEisenstein}.
They are the analogues of the single-valued functions $\log|z|$ on $\C^{\times}$.

\begin{rem}
 The systematic use of tangential basepoints to regularise period integrals associated to Eisenstein series clarifies and simplifies many  constructions in the literature.   Since this was only recently introduced \cite{MMV} \S4, we provide some commentary:
 
 \begin{itemize}
 
 \item The period polynomial of the Eisenstein series is equivalent to formulae which must have been known to Ramanujan and are given by \cite{MMV}, \S9:
 \begin{eqnarray} \   e_{2k}^0(S)   &= &  {(2k-2)! \over 2} \, \sum_{i=1}^{k-1}    {\Be_{2i} \over (2i)!}{\Be_{2k-2i}  \over (2k-2i)!}  X^{2i-1} Y^{2k-2i-1} \  ,  \nonumber   \\
  e_{2k}^0(T)   &= &  {(2k-2)! \over 2}  {\Be_{2k} \over (2k)!} \Big(  { (X+Y)^{2k-1} - X^{2k-1}   \over Y}\Big)\ .  \nonumber 
  \end{eqnarray}  
 However, I could not find  this precise formulation elsewhere. The literature  tends to focus on period polynomials (value of a cocycle on $S$) which only determine the cocycle in the cuspidal case. Zagier's approach is to  introduce poles in $X,Y$ to force the Eisenstein cocycle to be cuspidal.
 
 \item It is often stated that $X^{2n} - Y^{2n}$ is the  period polynomial of an Eisenstein series, but is in fact  the value of the cuspidal coboundary cocycle at $S$, and vanishes in cohomology. It is, however, non-zero in \emph{relative} cohomology and is dual to the Eisenstein cocycle under the Petersson inner product (which pairs cocycles and compactly supported cocycles). This is discussed in \cite{MMV} \S9.
 
 \item The `extra' relation satisfied by period polynomials of cusp forms \cite{KohnenZagier},  expresses the orthogonality of the cocycle of a cusp form to the   Eisenstein cocycle with respect to the Haberlund-Petersson inner product.
 
 \end{itemize}

 \end{rem}

\section{Equivariant double iterated integrals} \label{sect: EquivDoubleEis}
We now define equivariant versions of  double Eisenstein integrals, which are modular analogues of the Bloch-Wigner function $D(z)$.

\subsection{Double Eisenstein integrals}
Recall that 
$$\mathcal{E}_{2n}: \HH \To V_{2n} \otimes \C$$
is the modular-invariant real analytic function which satisfies
\begin{equation} \label{dE2nisE} d \,\mathcal{E}_{2n} =   \mathrm{Re} \, \underline{E}_{2n+2}\ . 
\end{equation}
For every $a,b\geq 2$ consider the family of one forms
\begin{eqnarray} 
\mathcal{D}_{2a,2b}   \ :  \  \HH &\To &    \big( V_{2a-2} \otimes V_{2b-2}  \big) \otimes \big( \C\, dz + \C \, d\overline{z} \big) \nonumber \\ 
\mathcal{D}_{2a,2b} & = &     \underline{E}_{2a} \otimes \mathcal{E}_{2b-2} + \mathcal{E}_{2a-2} \otimes  \overline{\underline{E}_{2b}}  \nonumber
\end{eqnarray}
They  are modular invariant: for all $\gamma \in \SL_2(\Z)$ we have
 $$\mathcal{D}_{2a,2b} (\gamma z)\big|_{\gamma} = \mathcal{D}_{2a,2b}(z)\ .$$
 \begin{lem} The family of forms $\mathcal{D}_{2a,2b}$ are closed:
 $$d \mathcal{D}_{2a,2b} =0 $$
\end{lem}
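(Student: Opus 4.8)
The plan is to compute $d\mathcal{D}_{2a,2b}$ directly via the Leibniz rule and observe that the only surviving term, of Hodge type $(1,1)$, appears twice with opposite signs.

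First I would record three elementary facts. (a) The form $\underline{E}_{2a}=2\pi i\,\GE_{2a}(z)(X-zY)^{2a-2}\,dz$ is closed: its coefficient is holomorphic in $z$, so $d\underline{E}_{2a}=\partial_{\overline z}(\cdots)\,d\overline{z}\wedge dz=0$; by conjugation $\overline{\underline{E}_{2b}}$ is closed as well, being of type $(0,1)$ with anti-holomorphic coefficient. (b) Applying $(\ref{dE2nisE})$ with $2n=2b-2$ and with $2n=2a-2$ gives $d\mathcal{E}_{2b-2}=\mathrm{Re}\,\underline{E}_{2b}=\tfrac12(\underline{E}_{2b}+\overline{\underline{E}_{2b}})$ and $d\mathcal{E}_{2a-2}=\mathrm{Re}\,\underline{E}_{2a}=\tfrac12(\underline{E}_{2a}+\overline{\underline{E}_{2a}})$. (c) One has $\underline{E}_{2a}\wedge\underline{E}_{2b}=0$ (both are multiples of $dz$) and $\overline{\underline{E}_{2a}}\wedge\overline{\underline{E}_{2b}}=0$ (both are multiples of $d\overline{z}$); the tensor factors in $V_{2a-2}\otimes V_{2b-2}$ are inert here, since $d$ and $\wedge$ act only on the differential-form part.

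Then I would apply the Leibniz rule to each summand. Since $\underline{E}_{2a}$ has form-degree $1$ and $\mathcal{E}_{2b-2}$ is a function, using (a), (b), (c):
\[
d\bigl(\underline{E}_{2a}\otimes\mathcal{E}_{2b-2}\bigr)=(d\underline{E}_{2a})\otimes\mathcal{E}_{2b-2}-\underline{E}_{2a}\wedge d\mathcal{E}_{2b-2}=-\tfrac12\,\underline{E}_{2a}\wedge\overline{\underline{E}_{2b}}\ .
\]
Since $\mathcal{E}_{2a-2}$ is a function and $\overline{\underline{E}_{2b}}$ is closed:
\[
d\bigl(\mathcal{E}_{2a-2}\otimes\overline{\underline{E}_{2b}}\bigr)=(d\mathcal{E}_{2a-2})\wedge\overline{\underline{E}_{2b}}+\mathcal{E}_{2a-2}\otimes d\overline{\underline{E}_{2b}}=\tfrac12\,\underline{E}_{2a}\wedge\overline{\underline{E}_{2b}}\ .
\]
Adding these two expressions yields $d\mathcal{D}_{2a,2b}=0$.

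The computation has no genuine obstacle; the only points requiring care are the sign $(-1)^{1}$ when $d$ is carried past the $1$-form $\underline{E}_{2a}$, and the observation that the mixed wedge $\underline{E}_{2a}\wedge\overline{\underline{E}_{2b}}$ is the unique term not killed outright by (c), so that exact cancellation between the two summands is forced. It is worth noting that the same argument proves the structural statement that $\underline{f}\otimes\mathcal{E}+\mathcal{E}'\otimes\overline{\underline{g}}$ is closed whenever $d\mathcal{E}=\mathrm{Re}\,\underline{g}$ and $d\mathcal{E}'=\mathrm{Re}\,\underline{f}$ with $f,g$ holomorphic modular forms, which is the reason these equivariant double Eisenstein integrals can be defined.
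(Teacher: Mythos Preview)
Your proof is correct and follows exactly the same approach as the paper's: apply $d$ via the Leibniz rule, use $d\mathcal{E}_{2n}=\mathrm{Re}\,\underline{E}_{2n+2}$, and observe that the only surviving cross-terms $\underline{E}_{2a}\wedge\overline{\underline{E}_{2b}}$ appear with opposite signs. The paper compresses this into a single displayed line (suppressing the common factor $\tfrac12$), while you have spelled out the Hodge-type bookkeeping and the sign from passing $d$ across a $1$-form, but the argument is identical.
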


\begin{proof}  By $(\ref{dE2nisE})$ and writing   $ \overline{dz} \wedge dz=- dz \wedge \overline{dz} $, we find that
$$   d \,   \mathcal{D}_{2a,2b} =  -   \underline{E}_{2a} \otimes \overline{\underline{E}}_{2b}    +\underline{E}_{2a} \otimes \overline{\underline{E}}_{2b} = 0 \ .   \   $$ 
\end{proof}
 By the previous lemma, it makes sense to consider the indefinite integral 
\begin{equation} K_{2a,2b}(z)  \ =  \frac{1}{2} \int_{z}^{\tone_{\infty}} \mathcal{D}_{2a,2b}(z)\ , 
\end{equation}
since the integrand is closed, and the integral only depends on the homotopy class of the chosen path. 
This function can be written in terms of real and imaginary parts of products  of iterated integrals of Eisenstein series. Indeed, we have
$$ K_{2a,2b}(z) \equiv    \frac{1}{2i}  \, \mathrm{Im}\,\Big( \int_{z}^{\tone_{\infty}}  \underline{E}_{2a} \underline{E}_{2b} \Big)   - \frac{1}{2} \mathrm{Re}  \,\Big( \int_{z}^{\tone_{\infty}}  \underline{E}_{2a} \Big)\times    \int_{z}^{\tone_{\infty}}  \overline{\underline{E}_{2b}  } $$
modulo iterated integrals of length one (we integrate from left to right).  

The real analytic function 
$$K_{2a,2b}(z): \HH \To V_{2a-2} \otimes V_{2b-2} \otimes \C$$  satisfies  the pair of differential equations
\begin{eqnarray}
{\partial \over \partial z} K_{2a,2b}  & = & {2 \pi i   \over 2}   \GE_{2a} (z)(X-zY)^{2a-2}  \otimes \mathcal{E}_{2b-2}(z)   \nonumber \\
{\partial \over \partial \overline{z}} K_{2a,2b}  & = &   {2\pi   i \over 2}    \mathcal{E}_{2a-2}(z) \otimes  \overline{\GE_{2b}(z)} (X-\overline{z}Y)^{2b-2}   \nonumber
\end{eqnarray} 
Recall the normalised projection 
$$ (  \pi i  )^k {\delta^k  \over (k!)^2} :    V_{2a-2} \otimes V_{2b-2} \otimes \C \To V_{2a+2b-4-2k} \otimes \C$$
\begin{defn} For any $a,b\geq 1$ and $0 \leq k \leq \min \{2a, 2b\}$, define 
$$K_{2a+2,2b+2}^{(k)}(z) = ( \pi i  )^k {\delta^k  \over (k!)^2} K_{2a+2,2b+2}(z) \ .$$
\end{defn}
By equation $(\ref{finalequation})$, its  lowest-weight components  satisfy
\begin{equation} \label{lowestweightforKs} \partial \,  \big(K_{2a+2,2b+2}^{(k)}\big)_{2a+2b-2k,0}   \quad  =  \quad  \binom{2a}{k} \LL^{k+1} \GE_{2a+2} \mathcal{E}_{2b-k,k}   
\end{equation}

\subsection{Equivariant versions of double Eisenstein integrals }
Since $\mathcal{D}_{2a,2b}$, and hence $d\, K^{(k)}_{2a,2b}$ is modular equivariant, it follows that 
$$C_{\gamma} \quad = \quad K^{(k)}_{2a,2b}  (\gamma \tau) \big|_{\gamma} - K^{(k)}_{2a,2b}(\tau) $$
is constant (does not depend on $\tau$), and defines a cocycle 
$$C_{\gamma }  \quad \in \quad Z^1(\SL_2(\Z), V_{2a+2b-4-2k}\otimes \C)\ .$$
By the Eichler-Shimura theorem, any such  cocycle can be expressed as a linear combination of cocycles of
cusp forms or their complex conjugates, Eisenstein series, and a coboundary $c |_{\gamma - \id}$ for some $c\in V_{2a+2b-4-2k}\otimes \C$. Define a modified function
\begin{equation} \label{Mkdef} M^{(k)}_{2a,2b} = K^{(k)}_{2a,2b}  - c - \frac{1}{2}   \int_{z}^{\tone_{\infty}} \big(\underline{f} +  \overline{\underline{g}} \big) 
\end{equation}
where $f$ is a holomorphic modular form, and $g$ a cusp form, both of weight $2a+2b-2-2k$,  which is modular equivariant:
$$M^{(k)}_{2a,2b} (\gamma \tau) \big|_{\gamma} =   M^{(k)}_{2a,2b} (\tau) \ .$$
This equation uniquely determines the functions $f, g, c$ and  $M^{(k)}_{2a,2b}$, except in the case when $2a+2b-4-2k=0$ since we can add an arbitrary constant $c\in \C$. Extracting the coefficients of $M^{(k)}_{2a,2b}$ via $(\ref{fsection2})$ yields a class of functions in $\mathcal{M}$.

\begin{thm}  \label{thmEquivDoubleEis} Let $a, b \geq 1$ and $0\leq k \leq \min \{2a,2b\} $, and  set  $w= a+b-k$.  There exists a family of elements $F_{r,s} \in \MI_2 \cap \mathcal{M}_{r,s}$ of total modular weight  $2w=r+s$ where $r,s\geq 0$,  which satisfy the equations 
\begin{eqnarray} \label{Frsequations}
\partial F_{r,s}  - (r+1) F_{r+1,s-1} & = & \binom{2a}{k} \binom{k+s}{k} \LL^{k+1} \GE_{2a+2} \mathcal{E}_{2b-k-s, k+s} \qquad  \hbox{ if } s\geq 1 \nonumber\\ 
\partial F_{2w,0}   & = & \binom{2a}{k} \LL^{k+1} \GE_{2a+2} \mathcal{E}_{2b-k, k}   \quad + \quad  \LL\, f  
\end{eqnarray}
where $f$ is the unique cusp form of weight $2w+2$ satisfying 
\begin{equation}\label{phtogetfinthm}   p^h   \Big( \binom{2a}{k} \LL^{k+1} \GE_{2a+2} \mathcal{E}_{2b-k, k}   +  \LL\, f  \Big) = 0\ 
\end{equation}
and $\mathcal{E}_{m,n}$ is understood to be zero if either  of $m, n$ are  negative. 
\end{thm}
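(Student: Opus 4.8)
The plan is to read the functions $F_{r,s}$ off the modular-equivariant function $M := M^{(k)}_{2a+2,2b+2}$ constructed in $(\ref{Mkdef})$, whose existence was already secured by the Eichler--Shimura argument above, and to translate the two vector-valued differential equations for $K_{2a+2,2b+2}$ into equations for its components. Writing $M = \sum_{r+s=2w} F_{r,s}(z)\,(X-zY)^r(X-\overline z Y)^s$ as in $(\ref{fsection2})$, one first checks that the $X^rY^s$-coefficients of $M$ lie in $\C[[q,\overline q]][z,\overline z]$: this holds for $K^{(k)}_{2a+2,2b+2}$ because it is assembled from regularised (iterated) integrals of Eisenstein series, whose $q$-expansions are polynomial in $z$ with coefficients in $\C[[q]]$ (cf.\ \S\ref{sectEisasint}), and it is preserved by subtracting the constant $c$ and the Eichler integrals $\int_z^{\tone_{\infty}}\underline f$, $\int_z^{\tone_{\infty}}\overline{\underline g}$. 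Proposition \ref{propmodularformsfromsections} then gives $F_{r,s}\in P^{-2w}\mathcal{M}_{r,s}$ with $r,s\geq 0$.

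Next I would establish the differential equations. Applying the normalised projector $(\pi i)^k\delta^k/(k!)^2$ to the first of the two equations for $K_{2a+2,2b+2}$ gives
\[
\frac{\partial}{\partial z}\,K^{(k)}_{2a+2,2b+2}\;=\;(\pi i)^{k+1}\,\frac{\delta^{k}}{(k!)^{2}}\Bigl(\GE_{2a+2}(z)\,(X-zY)^{2a}\otimes\mathcal{E}_{2b}(z)\Bigr).
\]
Lemma \ref{lemapplydeltak} (with $2m=2a$ and $A=\GE_{2a+2}\mathcal{E}_{2b}$) computes the $(X-zY)^r(X-\overline z Y)^s$-coefficient of the right-hand side, and Proposition \ref{propdF=A} then turns this identity into the system
\[
\partial K^{(k)}_{r,s}-(r+1)K^{(k)}_{r+1,s-1}\;=\;\binom{2a}{k}\binom{k+s}{k}\LL^{k+1}\,\GE_{2a+2}\,\mathcal{E}_{2b-k-s,\,k+s},
\]
the $s=0$ case being exactly $(\ref{lowestweightforKs})$. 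It remains to track the three correction terms of $(\ref{Mkdef})$: the constant $c$ contributes nothing, because $\frac{\partial c}{\partial z}=0$ forces its components to satisfy the homogeneous system by Proposition \ref{propdF=A}; the anti-holomorphic integral contributes nothing since its $\frac{\partial}{\partial z}$ vanishes; and $\int_z^{\tone_{\infty}}\underline f$, for which $\frac{\partial}{\partial z}=-2\pi i\,f(z)(X-zY)^{2w}$, contributes, again by Proposition \ref{propdF=A}, only to the $s=0$ equation, adding the term $\LL f$. Decomposing $f$ into its cuspidal and Eisenstein parts and adding a suitable multiple of the equivariant real analytic Eisenstein section $\mathcal{E}_{2w}$ to $M$ — which leaves $M$ equivariant and only alters the $s=0$ equation by a multiple of $\LL\GE_{2w+2}$, by $(\ref{realanalholequation})$ — we may assume $f$ is a cusp form of weight $2w+2$. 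This yields $(\ref{Frsequations})$.

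To pin down $f$, note that since $M$ is equivariant, $F_{2w,0}\in\mathcal{M}_{2w,0}$, so $\partial F_{2w,0}$ has a $\partial$-primitive in $\mathcal{M}$ and Theorem \ref{thmpartialsorthogtocusp} forces $p^h(\partial F_{2w,0})=0$, which is precisely $(\ref{phtogetfinthm})$; moreover $f$ is unique because, since $\LL=-2\pi y$, the map $g\mapsto p^h(\LL g)$ is $-2\pi$ times the identity on $S_{2w+2}$ (cf.\ $(\ref{scaleIP})$), so $(\ref{phtogetfinthm})$ has exactly one solution. Finally, for the membership $F_{r,s}\in\MI_2$, let $W$ be the $\C[\LL^{-1}]$-module generated by $\C[\LL^{-1}]$, the real analytic Eisenstein series $\mathcal{E}_{p,q}$, and the $F_{r,s}$, filtered by $W_0=\C[\LL^{-1}]$, $W_1=W_0+\sum_{p,q}\C[\LL^{-1}]\mathcal{E}_{p,q}$, $W_2=W$. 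The equations $(\ref{Frsequations})$, the analogous $\overline\partial$-equations obtained the same way from the second equation for $K_{2a+2,2b+2}$, and $(\ref{realanalholequation})$--$(\ref{realanalantiholequation})$ show that $W$ satisfies the defining conditions $(\ref{MIdiffcond})$ with this length filtration: the inhomogeneous terms $\LL^{k+1}\GE_{2a+2}\mathcal{E}_{2b-k-s,k+s}$ lie in $M[\LL]\cdot\MI_1$ since $\GE_{2a+2}\in M$ and $\mathcal{E}_{2b-k-s,k+s}\in\MI_1$, while the remaining boundary terms $\LL f$ and $\LL\overline g$ lie in $M[\LL]\cdot\MI_0$ and $\overline M[\LL]\cdot\MI_0$ respectively. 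By maximality of $\MI$ we get $W\subseteq\MI$ compatibly with the filtration, hence $F_{r,s}\in\MI_2\cap\mathcal{M}_{r,s}$.

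The step I expect to be most delicate is the second one: extracting the binomial coefficients $\binom{2a}{k}\binom{k+s}{k}$ correctly from $\delta^k$ via Lemma \ref{lemapplydeltak}, and above all checking that each of the three correction terms in $(\ref{Mkdef})$ enters the component equations exactly as claimed — in particular, that the Eisenstein part of $f$ can be absorbed into a length-one modification so that the $f$ appearing in $(\ref{Frsequations})$ is genuinely a cusp form satisfying $(\ref{phtogetfinthm})$. Everything else should be formal, given the machinery of \S\ref{sectEquivariant} and the earlier obstruction results.
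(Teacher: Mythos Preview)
Your proposal is correct and follows essentially the same route as the paper: extract the components of $M^{(k)}_{2a+2,2b+2}$ via Proposition~\ref{propmodularformsfromsections}, translate the $\partial/\partial z$-equation using Lemma~\ref{lemapplydeltak} and Proposition~\ref{propdF=A}, observe that the correction terms in $(\ref{Mkdef})$ only affect the $s=0$ equation, absorb the Eisenstein part of $f$ into a length-one modification, and invoke Theorem~\ref{thmpartialsorthogtocusp} for the determination of the cusp form. You supply more detail than the paper does in several places (the expansion check for Proposition~\ref{propmodularformsfromsections}, the explicit tracking of each correction term, the uniqueness of $f$, and the $\MI_2$ membership via maximality), but the architecture is the same.
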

 \begin{proof}  The function  $M^{(k)}_{2a+2,2b+2} : \HH \rightarrow V_{2w}\otimes \C$ is equivariant by definition.  Let $M_{r,s}$ denote its modular components obtained from proposition \ref{propmodularformsfromsections}. From the definition $(\ref{Mkdef})$, and the differential equation for $K^{(k)}_{2a+2,2b+2}$, we can apply proposition $\ref{propdF=A}$ to deduce the equations satisfied by 
  $M_{r,s}$.  The first line of $(\ref{Frsequations})$ follows since the correction terms in $(\ref{Mkdef})$
  only affect the case $s=0$. For $s=0$,  $(\ref{Mkdef})$ and $(\ref{lowestweightforKs})$ imply that 
  $$\partial M_{2w, 0} =   \binom{2a}{k} \LL^{k+1} \GE_{2a+2} \mathcal{E}_{2b-k, k}   \quad + \quad  \LL\, f  \ .$$
 By modifying $M^{(k)}_{2a+2,2b+2}$ by  a suitable multiple of $\mathcal{E}_{2w+2} $, we can assume that $f$ is
 a  cusp form. It is uniquely determined by  theorem $\ref{thmpartialsorthogtocusp}$, which gives equation $(\ref{phtogetfinthm})$. 
 
The quantities $\overline{\partial} M_{r,s}$ can be computed from $(\ref{Mkdef})$, and satisfy:\begin{eqnarray} \label{Frsconjequations}
\overline{\partial} M_{r,s}  - (s+1) M_{r-1,s+1} & = &  \binom{2b}{k} \binom{k+s}{k} \LL^{k+1} \GE_{2b+2} \mathcal{E}_{2a-k-s, k+s}  \nonumber\\ 
\overline{\partial} M_{0,2w}   & = &  \binom{2b}{k} \LL^{k+1} \GE_{2b+2} \mathcal{E}_{2a-k, k}   \quad + \quad  \LL\, g  
\end{eqnarray}
This proves that the $M_{r,s}$ lie in $\MI_{2}$.  
The cusp form $g$ is uniquely determined from the antiholomorphic projection  $p^{a}( \overline{\partial} M_{0,2w})=0$. 
  \end{proof}
  
  \begin{rem}
 The function $K^{(k)}_{2a,2b}$ is related to the function $I^{(k)}_{2a,2b}$ defined in \cite{MMV}, and its holomorphic projection is possibly related to the double Eisenstein series of \cite{DC}.    \end{rem} 
  
  The system of functions satisfying both $(\ref{Frsequations})$ and $(\ref{Frsconjequations})$ are unique up to possible addition of a multiple of $\LL^{-r}$ for $F_{r,r}$. 
  We can show  that these functions are  linearly independent for distinct values of $a,b$ and $k$.

\subsection{Example} \label{ExamplesE4E4} Since there are no cusp forms in weights $\leq 10$, it follows that the functions defined above, for 
$2w = 2a+2b-2k \leq 8  $
only involve iterated integrals of Eisenstein series.
 The simplest possible example  is the case 
$a=1,b=1$ and $k=0,1,2$. 
The  equivariant iterated integral $M^{(k)}_{4,4}$  of $\GE_4$ and $\GE_4$ solves the equation 
$$ {\partial F^{(k)}  \over \partial z }  ={ (i \pi)^{k+1}}  {\delta^{(k)} \over (k!)^2}   \big( \underline{\GE_4} \otimes \mathcal{E}_2 \big)$$
for $k =0, 1,2$, corresponding to the three components of 
$$\delta^{0}\oplus \delta^{1} \oplus \delta^{2} : V_{2} \otimes V_2 \overset{\sim}{\To} V_4 \oplus V_2 \oplus V_0 \ . $$ By proposition $\ref{propdF=A}$, this equation is equivalent to the following three families of equations, which we spell out for  concreteness. In the case $k=0$, we have 
\begin{eqnarray}
\partial F^{(0)}_{0,4} -  F^{(0)}_{1,3} & =&  0  \\
\partial F^{(0)}_{1,3} - 2 F^{(0)}_{2,2} & =&  0  \nonumber \\
\partial F^{(0)}_{2,2} -  3F^{(0)}_{3,1} & =&  \LL \GE_4 \mathcal{E}_{0,2}   \nonumber \\
\partial F^{(0)}_{3,1} -  4 F^{(0)}_{4,0} & =&   \LL \GE_4 \mathcal{E}_{1,1}  \nonumber \\
\partial F^{(0)}_{4,0}    & =&  \LL \GE_4 \mathcal{E}_{2,0} \ .   \nonumber 
\end{eqnarray} 
In the case $k=1$, we have
\begin{eqnarray}
\partial F^{(1)}_{0,2} -  F^{(1)}_{1,1} & =&  0   \\
\partial F^{(1)}_{1,1} - 2  F^{(1)}_{2,0} & =& 4  \LL^2 \GE_4 \mathcal{E}_{0,2}   \nonumber \\
\partial F^{(1)}_{2,0}   & =& 2  \LL^2 \GE_4 \mathcal{E}_{1,1}  \ .   \nonumber \end{eqnarray} 
Finally, in the case $k=2$ we have the single equation 
\begin{equation}
\qquad \qquad \partial F^{(2)}_{0,0}  \quad =  \quad   \LL^3 \GE_4 \mathcal{E}_{0,2}  \ . 
\end{equation}
Since there are no cusp forms in weight 4, the statement of  theorem  \ref{thmpartialsorthogtocusp} is vacuous and there is no  obstruction 
 to the  existence of a solution of these equations. The constant terms can be computed from the double integral of the Eisenstein series $E_4$ and $E_4$. The latter involves at most $\zeta(3), \zeta(5)$ and $\zeta(3)^2$.

The shuffle product formula for iterated integrals implies that 
$$
\begin{array}{ccccccc}
 F_{0,4}^{(0)}  &=  &\textstyle{1 \over 2} \mathcal{E}_{0,2}^2 &   &   F_{4,0}^{(0)} & =& \textstyle{1 \over 2} \mathcal{E}_{2,0}^2 \\
 F_{1,3}^{(0)} &=&  \mathcal{E}_{0,2} \mathcal{E}_{1,1}  &   &  F_{3,1}^{(0)}& =&  \mathcal{E}_{2,0} \mathcal{E}_{1,1}  \\
 &&   F^{(0)}_{2,2} & =& \mathcal{E}_{2,0} \mathcal{E}_{0,2} + \textstyle{1\over 2} \mathcal{E}_{1,1}^2   &&
\end{array}
$$
are linear combinations of real analytic Eisenstein series as is 
$$F^{(2)}_{0,0}  =  \LL^2( \mathcal{E}_{2,0} \mathcal{E}_{0,2}  - \textstyle{1\over 4} \mathcal{E}_{1,1}^2  )$$
 but this is not the case for the  functions $F^{(1)}$, which are new.    The above identities can be verified from their differential equations. Observe that the functions $\mathcal{E}_{2,0} \mathcal{E}_{0,2}$ and $\mathcal{E}_{1,1}^2$  can be expressed as linear combinations of $F^{(0)}_{2,2}$ and $\LL^{-2} F^{(2)}_{0,0}$. 
  By theorem $\ref{thmEquivDoubleEis}$  
\begin{eqnarray}
\overline{\partial} F^{(1)}_{0,2}  & =&  2  \LL^2 \overline{\GE_4} \mathcal{E}_{1,1}  \nonumber    \\
\overline{\partial} F^{(1)}_{1,1} - 2  F^{(1)}_{0,2} & =& 4  \LL^2  \overline{\GE_4} \mathcal{E}_{2,0}   \nonumber \\
\overline{\partial} F^{(1)}_{2,0} -  F^{(1)}_{1,1}  & =& 0  \ ,   \nonumber \end{eqnarray} 
and  $\overline{\partial} F^{(2)}_{0,0}  \quad =  \quad   \LL^3 \overline{\GE_4} \mathcal{E}_{2,0}$. 
 Now, from the above equations and the definition \ref{LaplaceDef} of the Laplacian, we deduce the equations:
$$
\begin{array}{ccccccc}
 (\Delta + 2)  F_{0,2}^{(1)} & = &  - 4 \LL^2   \overline{\GE_4} \mathcal{E}_{2,0}    & \qquad & (\Delta + 4)  F_{0,4}^{(0)} & = &  - \LL \overline{\GE_4}   \mathcal{E}_{1,1}   \\
(\Delta + 2)  F_{1,1}^{(1)} & = &  - 4 \LL^3  \GE_4 \overline{\GE_4}    &&   (\Delta + 4)  F_{1,3}^{(0)} & = &  -2  \LL \overline{\GE_4}   \mathcal{E}_{2,0}     \\ 
(\Delta + 2)  F_{2,0}^{(1)} & = &  - 4 \LL^2   \GE_4 \mathcal{E}_{0,2} && (\Delta + 4)  F_{2,2}^{(0)} & = &  -  \LL^2 \GE_4  \overline{\GE_4}      \\ 
&&&& (\Delta + 4)  F_{3,1}^{(0)} & = &  - 2  \LL \GE_4  \mathcal{E}_{0,2}  \\ 
\Delta F^{(2)}_{0,0}  &  = &  -  \LL^4 \, \GE_4 \overline{\GE_4} && (\Delta + 4)  F_{4,0}^{(0)} & = &  -   \LL \GE_4  \mathcal{E}_{1,1}   
\end{array}
$$
In fact, we note that
$$(\Delta+2) \mathcal{E}_{1,1}^2 = - 8\,  \mathcal{E}_{0,2} \mathcal{E}_{2,0} \qquad \hbox{ and } \qquad \Delta\,  \mathcal{E}_{2,0} \mathcal{E}_{0,2}  = -  \mathcal{E}_{1,1}^2 -\LL^2 \GE_4 \overline{\GE_4}\ .$$
We have therefore generated three modular-invariant functions 
$$ \LL^2 \mathcal{E}_{2,0} \mathcal{E}_{0,2} \quad , \quad \LL^2 \mathcal{E}_{1,1}^2 \quad , \quad \LL F^{(1)}_{1,1} \qquad \in \qquad\mathcal{M}_{0,0} $$
out of which one can construct solutions to inhomogeneous Laplace equations:
$$(\Delta + 2) \big( \LL F^{(1)}_{1,1} - 4 \LL^2 \mathcal{E}_{2,0} \mathcal{E}_{0,2}\big) =  4 \LL^2 \mathcal{E}_{1,1}^2\ , $$
By comparing with $(\ref{introC211evalue})$, this suggests that the modular graph function $C_{2,1,1}(z)$ can be expressed in terms of  the functions $\LL F^{(1)}_{1,1} - 4 \LL^2 \mathcal{E}_{2,0} \mathcal{E}_{0,2}$, $\LL^3 \mathcal{E}_{3,3}$ and a  constant.

\begin{rem} The coefficients in the expansion (\ref{qexp}) of these functions  are easily determined from the formulae for  the action (\ref{partialactformula}) of $\partial, \overline{\partial}$ and the above differential equations, up to the sole exception of  a constant term. This constant term can be determined by analytic continuation since it is the residue of a certain zeta function canonically associated to elements in $\mathcal{M}_{r,s}$.  This will be discussed elsewhere. 
\end{rem} 

\subsection{Orthogonality conditions}
 We now wish to consider the problem of finding linear combinations of equivariant iterated integrals which only involve Eisenstein series, i.e., in which all  integrals of cusp forms cancel  out.  This is equivalent to finding linear combinations of the $M_{2a,2b}^{(k)}$ which are orthogonal to all cusp forms under the Petersson inner product. Since this problem is discussed 
 in \cite{MMV}, \S22 in an essentially equivalent form, we illustrate with a simple example.

\begin{example}
Let $k=0$, and let $\Delta$ denote the Hecke  normalised cusp form of weight $12$. Consider the four lowest weight functions   $F_{10,0}^{2a+2,2b+2} \in  \mathcal{M}_{10,0}$  
for $a+b=5$ and $1\leq a,b$ which are described in theorem  \ref{thmEquivDoubleEis}, and satisfy the equations
$$\begin{array}{rclclcrclcl}
 \partial F^{2,8} &  =&   \LL \GE_4 \mathcal{E}_{8,0} &-& \alpha^{2,8} \LL \Delta  &\quad   , \quad &\partial F^{8,2}  &= &  \LL \GE_{10} \mathcal{E}_{2,0} &- &\alpha^{10,4} \LL \Delta    \\
  \partial F^{4,6}  &= &  \LL \GE_6 \mathcal{E}_{6,0}& -& \alpha^{4,6} \LL \Delta& ,  &  \partial F^{6,4}  &= &  \LL \GE_8 \mathcal{E}_{4,0}& -& \alpha^{6,4} \LL \Delta     
\end{array}
$$
where $\alpha^{2a,2b}$ are determined by the Petersson inner product:
$$  \alpha^{2a,2b} \langle \Delta, \Delta \rangle = \langle \GE_{2a+2} \mathcal{E}_{2b,0}, \Delta\rangle \ . $$
The right-hand side can be computed by the Rankin-Selberg method \cite{MMV} \S9, and implies that 
$\alpha^{2a, 2b} $ is proportional, by some explicit factors, to   $L (\Delta, 2a+1) L(\Delta, 12)$.
On the other hand, it is well-known that the quantities $L(\Delta, k)$ for $1 \leq k \leq 11$ satisfy the period polynomial relations, and we deduce that the quantity
$$ X=  9 \big(F^{2,8}-F^{8,2}\big)  + 14 \big(F^{4,6}-F^{6,4})     $$
has the property that all terms involving $\Delta$ drop out of $\partial X$. 
One can show, furthermore,  that  $X$ is dual to the relations between double zeta values in weight $12$.
\end{example}

Viewed in this manner, it might seem hopeless to find iterated integrals of Eisenstein series of higher lengths which are equivariant. Already in length three,  the Rankin-Selberg method can no longer be applied in any obvious manner  to  
 find the necessary linear combinations of triple Eisenstein integrals. Fortunately, using the theory of the motivic fundamental group of the Tate curve, we can find an infinite class, and conjecturally all, solutions  to this problem. This is summarised below.

\section{A space of  equivariant Eisenstein integrals} \label{sectMIE}

Recall that $E$ is the graded $\Q$ vector space generated by Eisenstein series $(\ref{Espacedefn})$. 
Let $\ZZ^{\sv}$ denote the ring of single-valued multiple zeta values. 

\begin{thm} There exists a 
 space $\MI^E \subset \mathcal{M}$ with the following properties:
\begin{enumerate} \setlength\itemsep{0.02in}
\item It is the $\ZZ^{\sv}$-vector space generated by certain (computable) linear combinations of real and imaginary parts of regularised iterated integrals of Eisenstein series. 
\item The space $\MI^E[\LL^{\pm}]$ is stable under  multiplication and complex conjugation.
\item It carries an even filtration (conjecturally a grading)  by $M$-degree, where $\LL$ has $M$-degree $2$, and the $\mathcal{E}_{r,s}$ have $M$-degree $2$. 
It is also filtered by the length (number of iterated integrals), which we denote by $\MI^E_k \subset \MI^E$. 
\item The subspace  of elements of  $\MI^E$ of total  modular weight $w$  and $M$-degree $\leq m$ is finite-dimensional for every $m,w$. 
\item Every element of $\MI^E$ admits an expansion in the ring
$$\ZZ^{\sv} [[q, \overline{q}]][\LL^{\pm}]\ ,$$
i.e., its coefficients are single-valued multiple zeta values.  An element of total modular  weight $w$ has poles in $\LL$ of order at most $w$.  An element of $M$-degree $2m$ has terms in $\LL^k$ for $k \leq m$. 
\item The space $\MI^E$ has the following differential structure:
\begin{eqnarray} 
\partial  \big( \MI_k^E\big)    &\subset &    \MI^E_{k}  \quad +  \quad E[\LL]  \times \MI_{k-1}^E    \nonumber \\
\overline{\partial}  \big( \MI_k^E \big)  &\subset &    \MI^E_{k} \quad + \quad   \overline{E}[\LL]  \times \MI_{k-1}^E    \ . \nonumber 
\end{eqnarray} 
The operators $\partial, \overline{\partial}$ respect the $M$-grading, i.e., $\deg_M \partial= \deg_M \overline{\partial}=0$, 
where the generators $\GE_{2n+2}$ of $E$ are placed in $M$-degree $0$.  

\item Every element $F \in \MI_k^E$ of total modular weight $w$ satisfies an inhomogeneous Laplace equation of the  form:
$$ (\Delta + w )\, F  \quad \in \quad  (E+ \overline{E})[\LL] \times \MI^E_{k-1} + E \overline{E} [\LL]  \times \MI^E_{k-2} \ . $$  
\end{enumerate}
\end{thm}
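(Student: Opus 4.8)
The plan is to differentiate twice and exploit the $\ssl_2$-structure to isolate the leading term. Since $\Delta$ preserves each $\mathcal{M}_{r,s}$, by linearity it suffices to treat $F\in\MI^E_k\cap\mathcal{M}_{r,s}$ with $r+s=w$. By the definition $(\ref{LaplaceDef})$ one has $\Delta_{r,s}=-\overline{\partial}_{s-1}\partial_r+r(s-1)$ on $\mathcal{M}_{r,s}$, so that
\[
(\Delta+w)F \;=\; -\,\overline{\partial}\partial F \;+\; s(r+1)\,F ;
\]
thus it is enough to prove that $\overline{\partial}\partial F$ agrees with $s(r+1)F$ modulo $(E+\overline{E})[\LL]\MI^E_{k-1}+E\overline{E}[\LL]\MI^E_{k-2}$.

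For this I would use the shift form of the differential equations satisfied by the components of the equivariant iterated integrals spanning $\MI^E$ (Proposition \ref{propdF=A} together with property (6)): if $F=F_{r,s}$ is the $(X-zY)^r(X-\overline{z}Y)^s$-coefficient of the relevant equivariant vector-valued function, then
\[
\partial F_{r,s}-(r+1)F_{r+1,s-1}\in E[\LL]\,\MI^E_{k-1},\qquad \overline{\partial}F_{r+1,s-1}-s\,F_{r,s}\in\overline{E}[\LL]\,\MI^E_{k-1},
\]
with $F_{r+1,s-1}\in\MI^E_k\cap\mathcal{M}_{r+1,s-1}$ and the convention $F_{r+1,s-1}=0$ if $s=0$ (both relations being linear in $F$, hence valid for arbitrary $\ZZ^{\sv}$-combinations of components of total weight $w$). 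Writing $\partial F=(r+1)F_{r+1,s-1}+B$ with $B\in E[\LL]\MI^E_{k-1}$ and applying $\overline{\partial}$: since $\overline{\partial}$ is a derivation killing $\LL$ and every holomorphic modular form, property (6) applied to $\MI^E_{k-1}$ gives $\overline{\partial}B\in E[\LL]\MI^E_{k-1}+E\overline{E}[\LL]\MI^E_{k-2}$, while $\overline{\partial}\bigl((r+1)F_{r+1,s-1}\bigr)=s(r+1)F+(r+1)B'$ with $B'\in\overline{E}[\LL]\MI^E_{k-1}$. Hence $(\Delta+w)F=-(r+1)B'-\overline{\partial}B$, the two copies of $s(r+1)F$ cancelling exactly, and the right-hand side lies in $(E+\overline{E})[\LL]\MI^E_{k-1}+E\overline{E}[\LL]\MI^E_{k-2}$ as required (when $s=0$ the term $(r+1)B'$ is absent and the claim is immediate). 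As a check, in \S\ref{ExamplesE4E4} the equation $(\Delta+2)F^{(1)}_{1,1}=-4\LL^3\GE_4\overline{\GE_4}$ emerges precisely as $-\overline{\partial}B$ with $B=4\LL^2\GE_4\mathcal{E}_{0,2}$ and $\overline{\partial}\mathcal{E}_{0,2}=\LL\overline{\GE}_4$.

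I expect the main obstacle to be justifying the first pair of relations above — in particular that the $\MI^E_k$-part of $\partial F_{r,s}$ is exactly $(r+1)F_{r+1,s-1}$ and that the remainder lies purely in $E[\LL]\MI^E_{k-1}$, with no cusp-form term and no residual $\MI^E_k$-contribution. This precise form is not a formal consequence of property (6) alone: it relies on the construction of $\MI^E$ from regularised equivariant iterated integrals of Eisenstein series via Proposition \ref{propdF=A}, together with the orthogonality conditions of \S\ref{sect: EquivDoubleEis} that force the cusp-form integrals to drop out of the linear combinations spanning $\MI^E$. Once that input is in place, the rest of the argument is the bookkeeping above, driven entirely by the identity $\overline{\partial}\partial F_{r,s}\equiv s(r+1)F_{r,s}$ modulo lower length.
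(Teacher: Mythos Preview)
The paper does not prove this theorem: immediately after stating it, the author writes ``This theorem and further properties of $\MI^E$ will be proved in the sequel.'' So there is no proof in the paper to compare against. Your proposal addresses only item (7), deriving it from a sharpened form of (6); the bulk of the theorem---the construction of $\MI^E$ and the verification of (1)--(6)---you do not attempt, and the paper does not either.

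On (7) specifically, your approach is correct and matches the paper's own remark that ``a more precise statement about the Laplace equation (7) can be derived from the differential equations (6).'' The computation $(\Delta+w)F = -\overline{\partial}\partial F + s(r+1)F$ is right, and once one has the shift relations $\partial F_{r,s}=(r+1)F_{r+1,s-1}+B$ and $\overline{\partial}F_{r+1,s-1}=sF_{r,s}+B'$ with $B,B'$ of lower length, the cancellation of the $\MI^E_k$-part is exactly as you describe. You are also right that property (6) as stated is too weak: it only gives $\partial F\in\MI^E_k + E[\LL]\MI^E_{k-1}$, which would leave an uncontrolled $\MI^E_k$-term in $(\Delta+w)F$. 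The precise shift form really does come from the construction of $\MI^E$ as coefficients of equivariant vector-valued functions via Proposition~\ref{propdF=A}, together with the elimination of cusp-form contributions. You identify this gap clearly and honestly.

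So your proposal is a correct sketch of how (7) follows from the construction, consistent with what little the paper says. But it is not a proof of the theorem, because neither you nor the paper supplies the construction here.
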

Explicitly, we have
$\MI^E_0 =  \quad \ZZ^{\sv} $, and 
$$\MI^E_1 \quad  =   \quad  \ZZ^{\sv} \oplus \bigoplus_{r,s\geq0} \mathcal{E}_{r,s}\ZZ^{\sv}   $$
In length $2$, we can show that $\MI_2^E$ is generated by the coefficients of linear combinations of $M^{(k)}_{2a,2b}$ which do not involve any cusp forms, and in particular the example of \S\ref{ExamplesE4E4}.   The previous theorem can be compared with \S\ref{SectModGraphProperties}.

\begin{rem}
A more precise statement about the  Laplace equation (7) can be derived from the differential equations (6). In fact, the differential equations with respect to $\partial, \overline{\partial}$ are the more fundamental structure. This simplicity is  obscured when looking only at the Laplace operator. 
Recently,  a generalisation of modular graph functions called modular graph forms were introduced in \cite{Graph3}. They define functions in $\mathcal{M}$ of more general modular weights $(r,s)$, and, up to scaling by $\LL^{\pm}$,   are closed under the action of $\partial, \overline{\partial}$. 
  It suggests that one should try to find systems of  differential equations, with respect to $\partial, \overline{\partial}$, satisfied by modular graph forms using partial fraction identities (see \cite{Graph3}, (2.30)), and match their solutions with elements in $\MI^E[\LL^{\pm}]$. 

We briefly explain how the previous theorem relates to a recent observation in \cite{Graph5} for modular graph functions.  
Suppose that  $f \in \MI^E$ of modular weights $(w,w)$. Then $\LL^{w} f$ is modular invariant, and by $(7)$ and repeated application of $(\ref{DeltaLL})$   it satisfies an inhomogenous Laplace eigenvalue equation with eigenvalue  
$$- (2w + 2w-2 + 2w-4 + \ldots + 2 + 0 ) = - 2 \binom{w}{2}  =   -w(w-1)\ .$$
It was observed in \cite{Graph5} that dihedral modular graph functions satisfy an inhomogeneous Laplace equation with eigenvalue $-s(s-1)$, where $s$ is a positive integer, and  the same statement was proved in \cite{Graph6} for two-loop modular graphs functions using the representation theory of $\mathrm{SO}(2,1)$.

The $M$-filtration can be made more precise. If $F \in \MI^E$ of $M$-degree $\leq 2 m$ then the coefficient of $\LL^{-k}$ in the constant part $F^0$ of $F$ is a single-valued multiple zeta value of weight  $ \leq k +m.$ If one assumes (for example, by replacing multiple zeta values with their motivic versions) that multiple zeta values are graded, rather than filtered, by weight, then 
this filtration would also be a grading. 
For example,  the elements $\mathcal{E}_{r,s}$ have constant parts
$$\mathcal{E}^0_{r,s}   \quad \in \quad    \LL\,  \Q +   \LL^{-r-s} \, \zeta(r+s+1) \Q\ .$$
The (MZV)-weight of $\zeta(r+s+1)$ is (conjecturally) $  r+s+1$, and the weight of a rational number is $0$. 
This is entirely consistent with $\deg_M \mathcal{E}_{r,s}=2$. 
\end{rem} 

This theorem and  further  properties of $\MI^E$ will be proved in the sequel.

\section{Meromorphic primitives of cusp forms} \label{sectMercusp}

We revisit the problem of finding primitives of cusp forms. If we allow poles at the cusp, then we can indeed construct  modular equivariant versions of cusp forms \cite{Mock}.

\subsection{Weakly analytic variant of $\mathcal{M}$}
Let $\mathcal{M}_{r,s}^{!}$ denote the vector space of functions $f:\HH \rightarrow \C$ which are real analytic modular of weights $(r,s)\in \Z^2$ admitting an expansion of the form
$$f (q)= \sum_{k=-N}^N \LL^k \, \Big(\! \sum_{m,n\geq -M} a^{(k)}_{m,n}   q^m \overline{q}^n\Big) $$
for some integers $M, N \in \N$, i.e., with  poles in $q, \overline{q}$ at $0$. Let 
$$\mathcal{M}^{!} =  \bigoplus_{r,s} \mathcal{M}_{r,s}^{!}\ .$$
It is a bigraded algebra and satisfies   $ \mathcal{M}^! = \mathcal{M}[\Delta(z)^{-1}, \overline{\Delta(z)}^{-1}]$ where $\Delta(z)$ denotes the Hecke normalised cusp form of weight $12$.  
This ring of functions satisfies similar properties to  $\mathcal{M}$, and  is equipped with operators $\partial, \overline{\partial}, \Delta$ as defined earlier.

\begin{defn} Define a space of modular iterated integrals $\MI^! \subset \mathcal{M}$ as follows. Let $\MI^!_{-1}=0$ and let $\MI^!_k \subset \mathcal{M}$ be the largest subspace which is contained in the positive quadrant (modular weights $(r,s)$ with $r,s\geq 0$), such that 
\begin{eqnarray}  \partial \MI^!_k   & \subset &  \MI^!_k  +  M^![\Lef]\times   \MI^!_{k-1} \nonumber \\
 \overline{\partial} \MI^!_k  & \subset & \MI^!_k  +  \overline{M^!}[\Lef] \times \MI^!_{k-1}\ . \nonumber
 \end{eqnarray}
 \end{defn} 
We now give some examples of elements  in $\MI^!_k$ for $k\leq 2$. 

\subsection{Primitives of cusp forms} The following theorem is proved in \cite{Mock}

 \begin{thm} For every cusp form $f\in S_n$, there exists a canonical family of functions 
  $\mathcal{H}(f)_{r,s}$ for all $r,s \geq 0$, with $r+s=n$ satisfying
 \begin{eqnarray} 
 \partial \, \mathcal{H}(f)_{n,0}   &= & \LL  f  \nonumber  \\ 
 \partial \, \mathcal{H}(f)_{r,s} & = & ( r+1 )   \mathcal{H}(f)_{r+1, s-1}     \qquad  \hbox{ for all  }1\leq s\leq w \  \nonumber 
\end{eqnarray} 
and 
 \begin{eqnarray} 
\overline{\partial} \, \mathcal{H}(f)_{0,n}   &= & \LL \,\overline{\s(f)}  \nonumber  \\ 
 \overline{\partial} \, \mathcal{H}(f)_{r,s} & = &  (s+1) \mathcal{H}(f)_{r-1, s+1}      \qquad  \hbox{ for all  }1\leq r\leq w \  \nonumber 
\end{eqnarray} 
where $\s(f) \in S_n^!$ is a weakly holomorphic modular form canonically associated to $f$.  They are eigenfunctions of the Laplacian with eigenvalue $-n$.
   \end{thm}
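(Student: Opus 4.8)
The plan is to construct the functions $\mathcal{H}(f)_{r,s}$ by the same equivariant-iterated-integral strategy used for Eisenstein series in \S\ref{sectEisasint}, but working in $\mathcal{M}^!$ rather than $\mathcal{M}$, and then to verify the Laplace eigenvalue statement directly from the recursive differential equations they satisfy. First I would form the vector-valued primitive $F(\tau)=\int_\tau^{\tone_\infty}\underline{f}(z)$ as in $(\ref{Ftaudef})$, which by $(\ref{Fdiscont})$ has a cocycle obstruction $C_\gamma\in Z^1(\SL_2(\Z),V_{n-2}\otimes\C)$. As recalled in the excerpt, $C_S = C_S^+ + iC_S^-$ splits into even and odd period polynomials, and by Eichler–Shimura both classes are nontrivial, so neither $\mathrm{Re}\,F$ nor $\mathrm{Im}\,F$ alone is equivariant. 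The key new idea (carried out in \cite{Mock}) is that one can \emph{nonetheless} kill the obstruction if one is allowed to subtract an iterated integral of a \emph{weakly holomorphic} form: one produces a canonical $\s(f)\in S_n^!$ whose cocycle matches $\mathrm{Re}\,C_\gamma$ (equivalently, whose period polynomial supplies exactly the missing half of Eichler–Shimura), so that
$$\mathcal{H}(f)(X,Y)(\tau) \;=\; \frac{1}{2}\int_\tau^{\tone_\infty}\bigl(\underline{f} + \overline{\underline{\s(f)}}\bigr)$$
becomes $\SL_2(\Z)$-equivariant. Extracting coefficients via $(\ref{fsection2})$ and applying Proposition \ref{propdF=A} gives functions $\mathcal{H}(f)_{r,s}\in\mathcal{M}^!_{r,s}$ satisfying exactly the two displayed recursions, with the sign convention $\partial\mathcal{H}(f)_{r,s}-(r+1)\mathcal{H}(f)_{r+1,s-1}=0$ for $s\geq1$ and $\partial\mathcal{H}(f)_{n,0}=\LL f$, and the conjugate system with $\overline{\s(f)}$.

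Granting the construction, the eigenvalue statement is a short computation using the machinery already assembled. By Definition $(\ref{LaplaceDef})$, $\Delta_{r,s} = -\overline{\partial}_{s-1}\partial_r + r(s-1)$. For $1\le s\le n$ we have $\partial_r\mathcal{H}(f)_{r,s}=(r+1)\mathcal{H}(f)_{r+1,s-1}$, hence
$$\Delta_{r,s}\mathcal{H}(f)_{r,s} \;=\; -(r+1)\,\overline{\partial}_{s-1}\mathcal{H}(f)_{r+1,s-1} + r(s-1)\mathcal{H}(f)_{r,s}.$$
Now apply the conjugate recursion $\overline{\partial}_{s-1}\mathcal{H}(f)_{r+1,s-1}= s\,\mathcal{H}(f)_{r,s}$ (valid since $r+1\ge1$), giving $\Delta_{r,s}\mathcal{H}(f)_{r,s} = \bigl(-(r+1)s + r(s-1)\bigr)\mathcal{H}(f)_{r,s} = -(r+s)\,\mathcal{H}(f)_{r,s} = -n\,\mathcal{H}(f)_{r,s}$. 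The boundary case $s=0$ is handled using the second form $\Delta_{r,s}=-\partial_{r-1}\overline{\partial}_s + s(r-1)$, i.e.\ $\Delta_{n,0}\mathcal{H}(f)_{n,0} = -\partial_{n-1}\overline{\partial}_0\mathcal{H}(f)_{n,0}$; since $\overline{\partial}_0\mathcal{H}(f)_{n,0}=1\cdot\mathcal{H}(f)_{n-1,1}$ (the $r=n$ instance of the conjugate recursion), this equals $-\partial_{n-1}\mathcal{H}(f)_{n-1,1} = -n\,\mathcal{H}(f)_{n,0}$ by the holomorphic recursion. The case $r=0$ is symmetric by complex conjugation. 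Thus every $\mathcal{H}(f)_{r,s}$ lies in $\mathcal{M}^!$ with $\Delta\mathcal{H}(f)_{r,s}=-n\,\mathcal{H}(f)_{r,s}$, as claimed.

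The main obstacle is the existence and canonicity of the companion form $\s(f)\in S_n^!$: one must show that the real part of the cusp-form cocycle $C_\gamma$ is itself the cocycle of some weakly holomorphic modular form of the same weight, and that this form can be normalised uniquely (e.g.\ by prescribing the principal part at the cusp or by an orthogonality/growth condition). This is precisely the content of the cited companion paper \cite{Mock}, resting on the surjectivity of the period map on the larger space $S_n^!$ together with the Eichler–Shimura description of $H^1(\SL_2(\Z),V_{n-2}\otimes\C)$; once $\s(f)$ is in hand, the recursions and the Laplace computation above are routine. Finiteness of the $q$-expansion poles and membership in $\mathcal{M}^!=\mathcal{M}[\Delta^{-1},\overline{\Delta}^{-1}]$ follow since $\s(f)$ is weakly holomorphic with poles supported at the cusp and iterated integrals of such forms introduce only polynomial growth in $\LL$ and bounded-order poles in $q,\overline q$.
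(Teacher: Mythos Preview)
Your proposal is correct and matches the approach the paper indicates. Note that the paper does not prove this theorem in situ: it states ``The following theorem is proved in \cite{Mock}'' and only records the equivalent vector-valued equation
\[
d\mathcal{H}(f) \;=\; \pi i\, f(z)(X-zY)^n\,dz \;+\; \pi i\,\overline{\s(f)(z)}(X-\overline{z}Y)^n\,d\overline{z},
\]
which is exactly what your primitive $\tfrac{1}{2}\int(\underline{f}+\overline{\underline{\s(f)}})$ (plus a possible constant correction) satisfies. Your Laplace eigenvalue computation from the two recursions is correct and is the natural argument, identical in structure to Corollary~\ref{corEijbasic} in the Eisenstein case.

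One small imprecision worth flagging: you describe $\s(f)$ as having cocycle ``matching $\mathrm{Re}\,C_\gamma$'', but what is actually needed for $\tfrac{1}{2}\bigl(C^f_\gamma+\overline{C^{\s(f)}_\gamma}\bigr)$ to be a coboundary is that the class of $C^{\s(f)}$ agree with that of $-\overline{C^f}$; in terms of period polynomials this is $-C_S^{+}+iC_S^{-}$ rather than $C_S^{+}$. This is cosmetic in an outline, and you correctly identify that the existence and canonical normalisation of $\s(f)$ (via surjectivity of the period map on $S_n^!$) is the genuine content deferred to \cite{Mock}.
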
 
   If we write 
   $$\mathcal{H}(f) = \sum_{r+s=n}  \mathcal{H}(f)_{r,s} (X- zY)^r (X- \overline{z} Y)^s$$
   then the  system of  equations above are equivalent to 
   \begin{equation} \label{dHf} 
    d \mathcal{H}(f) = \pi i  f(z)(X-zY)^n dz +      \pi i \,  \overline{\s(f)(z)}(X-\overline{z} Y)^n d\overline{z} 
   \end{equation} 
   
    In \cite{MMV} \S18,   these formulae were generalised to  all higher order iterated integrals.  
   We show in \cite{Mock} that $\MI^!_0 = \C [\Lef^{-1}]$, and prove:
   \begin{thm}  $\MI^!_1$ is the free $\C [\Lef^{-1}]$-module generated by the $\mathcal{H}(f)_{r,s}$\ .
   \end{thm}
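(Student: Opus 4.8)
The plan is to mimic, in weakly holomorphic guise, the inductive argument already used to prove the corresponding statement in the holomorphic case (the description of $\MI_1$ via the $\mathcal{E}_{r,s}$). First I would show that the functions $\mathcal{H}(f)_{r,s}$, for $f$ ranging over a basis of $S_n$ and all $r+s=n$, genuinely lie in $\MI^!_1$: this is immediate from the differential equations in the previous theorem together with the definition of $\MI^!_k$, since $\partial \mathcal{H}(f)_{n,0} = \LL f \in S[\LL]\subset M^![\LL]$ and all remaining $\partial$- and $\overline{\partial}$-derivatives of the $\mathcal{H}(f)_{r,s}$ stay within the span of the $\mathcal{H}(f)_{r',s'}$, i.e.\ inside $\MI^!_1 + M^![\LL]\times\MI^!_0$. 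One must also note $\s(f)\in S_n^!\subset M^!$, so the $\overline{\partial}$ condition is satisfied as well. Hence the $\C[\LL^{-1}]$-module $V$ generated by the $\mathcal{H}(f)_{r,s}$ is contained in $\MI^!_1$.

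For the reverse inclusion, I would take an arbitrary $F\in \MI^!_1$, decompose it into modular components $F_{r,s}\in\mathcal{M}^!_{r,s}$ with $r,s\geq 0$, and argue by induction on $s$ starting from the edge $s=0$. On the edge, $F_{n,0}$ of weights $(n,0)$ satisfies $\partial F_{n,0}\in M^![\LL]\times \MI^!_0 = M^![\LL^{\pm}]$ (using $\MI^!_0=\C[\LL^{-1}]$ and that $\partial,\overline{\partial}$ commute with $\LL$); moreover $\overline{\partial}F_{n,0}=0$ since there is no component $F_{n+1,-1}$ and $\MI^!_0$ is killed by $\overline{\partial}$. Multiplying by a suitable power of $\LL$ and using $(\ref{partialLk})$, the equation $\partial F_{n,0}= \LL g$ with $g\in M^![\LL^{\pm}]$ reduces, by extracting homogeneous $\LL$-degrees, to finitely many equations of the form $\partial F' = \LL h$ with $h$ a weakly holomorphic modular form of weight $n+2$. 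Writing $h = \sum_j c_j \GE_{2n+2}^{?}\cdots$ — more precisely, decomposing $h$ into a weakly holomorphic Eisenstein part plus a (weakly holomorphic) cusp-form part — the Eisenstein contributions have primitives among the $\mathcal{E}_{r,s}$ and the almost-holomorphic data already in $\mathcal{M}$, while the cusp-form contributions $\LL f$, $f\in S^!_{n+2}$, have the canonical primitives $\mathcal{H}(f)_{n+2,0}$ supplied by the previous theorem. However, here I expect the subtlety: the right bookkeeping is to show that the only new weakly-holomorphic modular forms that actually occur as $\partial F_{n,0}$ modulo $M$ (the holomorphic part, whose primitives live in $\MI_1$, not just $\MI^!_1$) are in $S^!_n$ after reindexing, and that every such is hit, with the correct uniqueness, by exactly the $\mathcal{H}(f)$; this is where one must invoke the characterisation of $\s(f)$ and the fact (from \cite{Mock}) that $f\mapsto \mathcal{H}(f)$ is a well-defined linear section. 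Granting that, $F_{n,0}$ is determined up to $\C\LL^{-n}\cdot(\text{weakly hol.\ forms of weight }0)$, i.e.\ up to an element of $\C[\LL^{-1}]\cdot\mathcal{H}(\cdot)$ together with $\C\LL^{-n}$ when $n=0$.

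Having pinned down the edge, I would then propagate into the interior: for $F_{r,s}$ with $r>s>0$, the first relation of the $\MI^!$-differential condition forces $\partial F_{r,s} - (\text{lower-}s\text{ term}) \in M^![\LL]\times\MI^!_0$, and since modular primitives in the region $r\geq s$ are unique up to the kernel described in proposition~\ref{propModularKernel}, $F_{r,s}$ is determined by the already-constructed $F_{r',s'}$ with smaller $s$, plus the ambiguity $\C\LL^{-r}$ only when $r=s$. By the analysis of the edge case, the $F_{r,s}$ so produced are $\C[\LL^{-1}]$-linear combinations of the $\mathcal{H}(f)_{r,s}$; the components with $r<s$ are obtained by complex conjugation (or symmetrically from the $\overline{\partial}$ relations starting at weights $(0,n)$), and $\MI^!$ is closed under conjugation. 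Therefore $F\in V$, so $\MI^!_1 = V$, and the module is free over $\C[\LL^{-1}]$ because the $\mathcal{H}(f)_{r,s}$ are $\C[\LL^{-1}]$-linearly independent — which follows by comparing constant parts, or more robustly by comparing holomorphic/antiholomorphic projections $p^h\partial F$, $p^a\overline{\partial}F$ of the $\mathcal{H}(f)_{n,0}$, which recover $f$ (resp.\ $\s(f)$) and hence are linearly independent as $f$ runs over a basis of $S_n$. The main obstacle, as indicated, is the edge-case argument: one needs the precise statement from \cite{Mock} that the weakly holomorphic forms $\s(f)$ are exactly what is required so that no weakly holomorphic cusp form of weight $n$ is left without a primitive in $\MI^!_1$ and none is doubly counted — i.e.\ that the obstruction of corollary~\ref{cornocuspprim} (which rules out primitives of holomorphic cusp forms in $\mathcal{M}$) is precisely cancelled, and only cancelled, by passing to $\mathcal{M}^!$ and allowing the pole structure of $\s(f)$.
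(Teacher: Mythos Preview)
The paper does not prove this theorem here; it is stated without argument and attributed to \cite{Mock}. So there is no in-paper proof to compare against, and I assess your outline on its own terms.

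Your template --- start at the edge $s=0$, use proposition~\ref{propModularKernel} for uniqueness, propagate inward --- is the right one, and the inclusion of the $\mathcal{H}(f)_{r,s}$ in $\MI^!_1$ is immediate from their differential equations. The genuine gap is the edge case, which you flag but do not close. For $F\in\MI^!_1$ of weights $(n,0)$ one obtains $\partial F=\LL g$ with $g\in M^!_{n+2}$, an \emph{infinite-dimensional} space. In the holomorphic setting theorem~\ref{thmpartialsorthogtocusp} cut $g$ down to the Eisenstein line; that tool is unavailable here, and your proposed ``weakly holomorphic Eisenstein / cusp'' decomposition does not help, since the $\mathcal{E}_{r,s}$ and the $\mathcal{H}(f)_{n,0}$ are $\partial$-primitives only of \emph{holomorphic} $\GE_{n+2}$ and $f\in S_{n+2}$ respectively --- not of arbitrary weakly holomorphic forms. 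What is missing is exactly the substance of \cite{Mock}: a characterisation of which $g\in M^!_{n+2}$ admit a primitive in $\mathcal{M}^!$ that simultaneously satisfies the $\overline{\partial}$-condition of the $\MI^!$ definition, and the role of $f\mapsto\s(f)$ in parametrising them. Without that input your argument cannot rule out the infinitely many extra $g$'s. Two smaller points: (i) the claim ``$\overline{\partial}F_{n,0}=0$ since there is no component $F_{n+1,-1}$'' is confused --- $\overline{\partial}$ sends weights $(n,0)$ to $(n-1,1)$, not $(n+1,-1)$, and there is no reason it should vanish (nor is this needed); (ii) the $\mathcal{E}_{r,s}$ visibly satisfy the $\MI^!_1$ conditions, so either $\mathcal{H}$ is used in \cite{Mock} in a sense broad enough to subsume them, or your restriction of $f$ to a basis of $S_n$ leaves the generating set incomplete.
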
 
    
    \subsection{New elements in $\mathcal{M}_{r,s}$}
    By multiplying by a suitable power of $\Delta(z) \overline{\Delta(z)}$ to clear the poles at the cusp,  we obtain elements in $\mathcal{M}$. For every cusp form $f\in S_n$,  
    $$\overline{\Delta(z)}^N  \mathcal{H}(f)_{r,s}  \in \mathcal{M}$$
    for sufficiently large $N$ (in fact, $N= \dim S_n$ will do). In particular, 
    $$\overline{\Delta(z)}  \mathcal{H}(\Delta(z))_{r,s} \in \mathcal{M}_{r,s+12} \ .$$
    This provides further evidence that the space of modular forms $\mathcal{M}$ contains potentially interesting elements. 
\subsection{Double integrals} Having defined the weakly holomorphic modular primitives of cusp forms, we can use them to construct equivariant double integrals of an Eisenstein series  and a cusp form, or two cusp forms. The definition is along very similar lines to \S\ref{sect: EquivDoubleEis}: consider the indefinite integrals of  the one-forms:
$$   \underline{f} \otimes \overline{\mathcal{H}(g)} +  \mathcal{H}(f)\otimes \overline{\underline{g}}  \qquad \hbox{ or } \qquad
\underline{f} \otimes \mathcal{E}_n  + \mathcal{H}(f) \otimes\overline{\underline{E}}_{n+2}
$$
They are closed by (\ref{dHf}), and so their indefinite integrals are well-defined (homotopy invariant).  The general strategy is  always the same: let 
$$\Omega = \sum_{r+s=n}   \omega_{r,s} (X-zY)^r (X-\overline{z}Y)^s$$
with $d \Omega=0$, and $\omega_{r,s} \in \mathcal{M}^!_{r,s}$ (which implies that $\Omega(\gamma z) \big|_{\gamma} = \Omega(z)$ for all $\gamma \in \SL_2(\Z)$). Consider the indefinite integral
$$\mathcal{F}(z) = \int_{z}^{z_0} \Omega $$
where $z_0 \in \HH$ is any point. Then 
$$\gamma \mapsto \mathcal{F}(\gamma z) \big|_{\gamma} - \mathcal{F}(z)  \quad \in \quad Z^1(\SL_2(\Z); V_n\otimes \C)$$
is a cocycle.   By  the Eichler-Shimura theorem, we can 
add primitives of holomorphic modular forms, anti-holomorphic cusp forms, and constants to $\mathcal{F}$ to  make this cocycle vanish (this is a generalisation of the proof of  lemma \ref{lemFmodular}).
The resulting function is therefore modular equivariant.   Extracting the coefficients in the manner of proposition \ref{propmodularformsfromsections}, we obtain non-trivial functions in  $\mathcal{MI}^!_2$.

As above, by multiplying by sufficiently large powers of $\Delta (z)\overline{\Delta}(z)$, we can clear poles in the denominators to obtain yet more elements in $\mathcal{M}$, and so on. 

\bibliographystyle{plain}
\bibliography{main}

\end{document}